\definecolor{coGB}{HTML}{1F77B4}
\definecolor{coWG}{HTML}{2CA02C}
\definecolor{coAR}{HTML}{D62728}
\newcommand{\xpos}{\boldsymbol{x}}
\newcommand{\ypos}{\boldsymbol{y}}
\newtheorem{theorem}{Theorem}[section]
\newtheorem{corollary}{Corollary}[theorem]
\title[Signal subspace imaging for SAR]{High-resolution, quantitative
  signal subspace imaging for synthetic aperture radar}
\author{Arnold D. Kim \and Chrysoula Tsogka}
\address{Department of Applied Mathematics, University of California,
  Merced, 5200 North Lake Road, Merced, CA 95343, USA}
\email{adkim@ucmerced.edu, ctsogka@ucmerced.edu}
\begin{document}

\maketitle


\begin{abstract}
  We consider synthetic aperture radar imaging of a region containing
  point-like targets. Measurements are the set of frequency responses
  to scattering by the targets taken over a collection of individual
  spatial locations along the flight path making up the synthetic
  aperture. Because signal subspace imaging methods do not work on
  these measurements directly, we rearrange the frequency response at
  each spatial location using the Prony method and obtain a matrix
  that is suitable for these methods. We arrange the set of these
  Prony matrices as one block-diagonal matrix and introduce a signal
  subspace imaging method for it. We show that this signal subspace
  method yields high-resolution and quantitative images provided that
  the signal-to-noise ratio is sufficiently high. We give a resolution
  analysis for this imaging method and validate this theory using
  numerical simulations. Additionally, we show that this imaging
  method is stable to random perturbations to the travel times and
  validate this theory with numerical simulations using the random
  travel time model for random media.
\end{abstract}


\section{Introduction}

Synthetic aperture imaging is used in many applications such as
ultrasonic non-destructive testing, mine detection, surveillance, and
radar imaging. The main idea behind synthetic aperture imaging is that
a single transmitter/receiver is used to probe an unknown region by
emitting known pulses into the medium and recording the time-dependent
responses as it moves along a given path. Fourier transforming these
time-dependent measurements yields their corresponding frequency
responses.  In this work we focus our attention on the synthetic
aperture radar imaging problem. However, the methodology used here can
be directly applied to other related problems.

Several imaging methods have been proposed in the literature for
imaging with SAR data.  The traditional SAR image is formed by
evaluating the data at each measurement location at the travel time
that it takes for the waves to propagate from the platform location to
a point in the imaging region on the ground and back. The resolution
of this image increases with the synthetic aperture and the system
bandwidth \cite{cheney2009fundamentals}.  When the phases of the waves
are recorded with high accuracy, SAR imaging produces high-resolution
images of the reflectivity on the ground.  It is well known however
that SAR imaging is quite sensitive to noise in the phase.  Such noise
may result from uncertainty in the platform motion and/or scattering
by randomly inhomogeneous media.  For SAR imaging with noise in the
phase, we refer to \cite{GS-08} for the application of coherent
interferometry (CINT) to SAR imaging and to the more recent work in
\cite{BG-20} on a high-resolution interferometric method for imaging
through scattering media.

Several approaches have been proposed in the literature to further
improve the resolution of SAR images. We refer to
\cite{baraniuk2007compressive,potter2010sparsity} for
sparsity-constrained $\ell_1$-minimization methods and to
\cite{borcea2016synthetic} for imaging effectively direction and
frequency dependent reflectivities using the multiple measurement
vector (MMV) framework \cite{malioutov05}. As in other applications,
using sparsity-constrained optimization methods significantly
increases the resolution of the SAR image. However, the computational
cost of optimization is significantly higher than that for sampling
methods such as SAR or CINT, which simply consist of evaluating an
imaging functional at each grid point on a mesh of the imaging region.

MUSIC (multiple signal classication) is another sampling method that
has been widely used in several imaging applications
~\cite{devaney2005time, fannjiang2011music,
  griesmaier2017multifrequency, moscoso2019robust}. To explain the
main idea of MUSIC, let us consider the single-frequency array imaging
problem.  For this problem the data is a matrix, called the array
response matrix. The $(i,j)$-th element of the array response matrix
corresponds to the data received at the $i$-th array element when the
$j$-th element is an emitter. The singular value decomposition (SVD)
of the array response matrix is used to determine the signal and noise
subspaces of the data.  Next, a model for the illumination vector
$\mathbf{a}(\ypos)$ is introduced, with $\ypos$ denoting a point in
the imaging region. The illumination vector is the vector of
measurements received along the receiving array due to a source at
$\ypos$.  If a target is located at $\ypos$, then $\mathbf{a}(\ypos)$
is in the signal subspace of the array response matrix. Thus, the
projection of $\mathbf{a}(\ypos)$ onto the noise subspace is zero or
very small. In MUSIC, one forms an image by evaluating one over this
norm of the projection of $\mathbf{a}(\ypos)$ onto the noise
subspace. The peaks appearing in the MUSIC image give the locations of
the targets with high resolution. Although MUSIC effectively and
efficiently produces high-resolution images, it does not apply
directly to SAR imaging data.

In this paper we introduce a modification and generalization of MUSIC
for SAR imaging. This imaging method modifies SAR data by using the
Prony method~\cite{prony1795essai} to rearrange frequency-dependent
data at one measurement location as a matrix.  Then we form a
block-diagonal matrix with the set of Prony matrices from all spatial
locations on the flight path. An image of the reflectivity on the
ground is then formed using a signal subspace method applied to this
block-diagonal matrix. This signal subspace method is a generalization
of MUSIC that projects the illuminating vector for each point $\ypos$
in the imaging region on both the noise and signal subspaces
\cite{gonzalez2021quantitative}. The noise subspace provides high
spatial resolution and the signal subspace provides quantitative
information about the targets. The result of combining these two
subspaces is a high-resolution quantitative imaging method.  The
relative balance between the noise and signal subspaces depends on the
noise level in the data which is controlled through a user-defined
regularization parameter, $\epsilon$.

There are two main results in this paper.  The first main result is
the resolution analysis for this modified and generalized MUSIC method
that shows an enhancement in resolution compared to classical SAR
imaging by a factor $\sqrt{\epsilon}$. Namely we obtain a cross-range
resolution of $O(\sqrt{\epsilon} (c/B) (L/a) )$ and a range resolution
of $O(\sqrt{\epsilon} (c/B) (L/R) )$. Here $c$ denotes the speed of
the waves, $B$ denotes the bandwidth, $a$ denotes the synthetic array
aperture, $L$ denotes the distance from the center of the flight path
to the center of the imaging region, and $R$ denotes the range offset
(see Fig. \ref{fig:schematic}).  The second main result is the
stability analysis of the method to random perturbations of the travel
times. This analysis shows that the method provides stable
reconstructions when $\epsilon$ is chosen to satisfy
$\sigma^{2} \ll \epsilon < 1 $ with $\sigma^{2}$ denoting the maximum
variance of the random perturbations of the travel times. Our
numerical simulations are in agreement with these theoretical
findings.  Moreover, they show that the proposed method provides
statistically stable results with signal-to-noise ratios comparable to
CINT, but with much better resolution.
  
The remainder of this paper is as follows. In Section \ref{sec:sar} we
give a brief description of synthetic aperture radar imaging and
define the measurements. In Section \ref{sec:prony} we describe the
Prony method that we use to rearrange the frequency data and show why
it is appropropriate for signal subspace imaging. We define the two
imaging functionals that we use for quantitative signal subspace
imaging in Section \ref{sec:method}. In Section \ref{sec:resolution}
we give a resolution analysis for the imaging method. We consider this
imaging method when the travel times have random perturbations in
Section \ref{sec:random} and give results for the expected value and
statistical stability of the image formed using this method. We show
numerical results that support our theory in Section
\ref{sec:results}. Section \ref{sec:conclusion} contains our
conclusions.

\section{Synthetic aperture radar imaging}
\label{sec:sar}

In synthetic aperture radar (SAR) imaging, a single
transmitter/receiver is used to collect the scattered electromagnetic
field over a synthetic aperture that is created by a moving platform
\cite{cheney2001mathematical, cheney2009fundamentals,
  moreira2013tutorial}. The moving platform is used to create a suite
of experiments in which pulses are emitted and resulting echoes are
recorded by the transmitter/receiver at several locations along the
flight path. Let $f(t)$ denote the broadband pulse emitted and let
$d(s,t)$ denote the data recorded. Here, the measurements depend on
the slow time $s$ that parameterizes the flight path of the platform,
$\boldsymbol{r}(s)$, and the fast time $t$ in which the round-trip
travel time between the platform and the imaging scene on the ground
is measured. In SAR imaging, one seeks to recover the reflectivity of
an imaging scene from these measurements.

Although SAR uses a single transmit/receive element, high-resolution
images of the probed scene can be obtained because the data are
coherently processed over a large synthetic aperture created by the
moving platform.  As illustrated in Fig.~\ref{fig:schematic}, the
platform is moving along a trajectory probing the imaging scene by
sending a pulse $f(t)$ and collecting the corresponding echoes. We
call range the direction that is obtained by projecting on the imaging
plane the vector that connects the center of the imaging region to the
central platform location. Cross-range is the direction that is
orthogonal to the range. Denoting the size of the synthetic aperture
by $a$ and the available bandwidth by $B$, the typical resolution of
the imaging system is $O((c/B) (L/R))$ in range and $O(\lambda L/a)$
in cross-range. Here $c$ is the speed of light and $\lambda$ the
wavelength corresponding to the central frequency while $L$ denotes
the distance between the platform and the imaging region and $R$ the
offset in range.
 
\begin{figure}[t]
  \centering
  \includegraphics[width=0.48\textwidth]{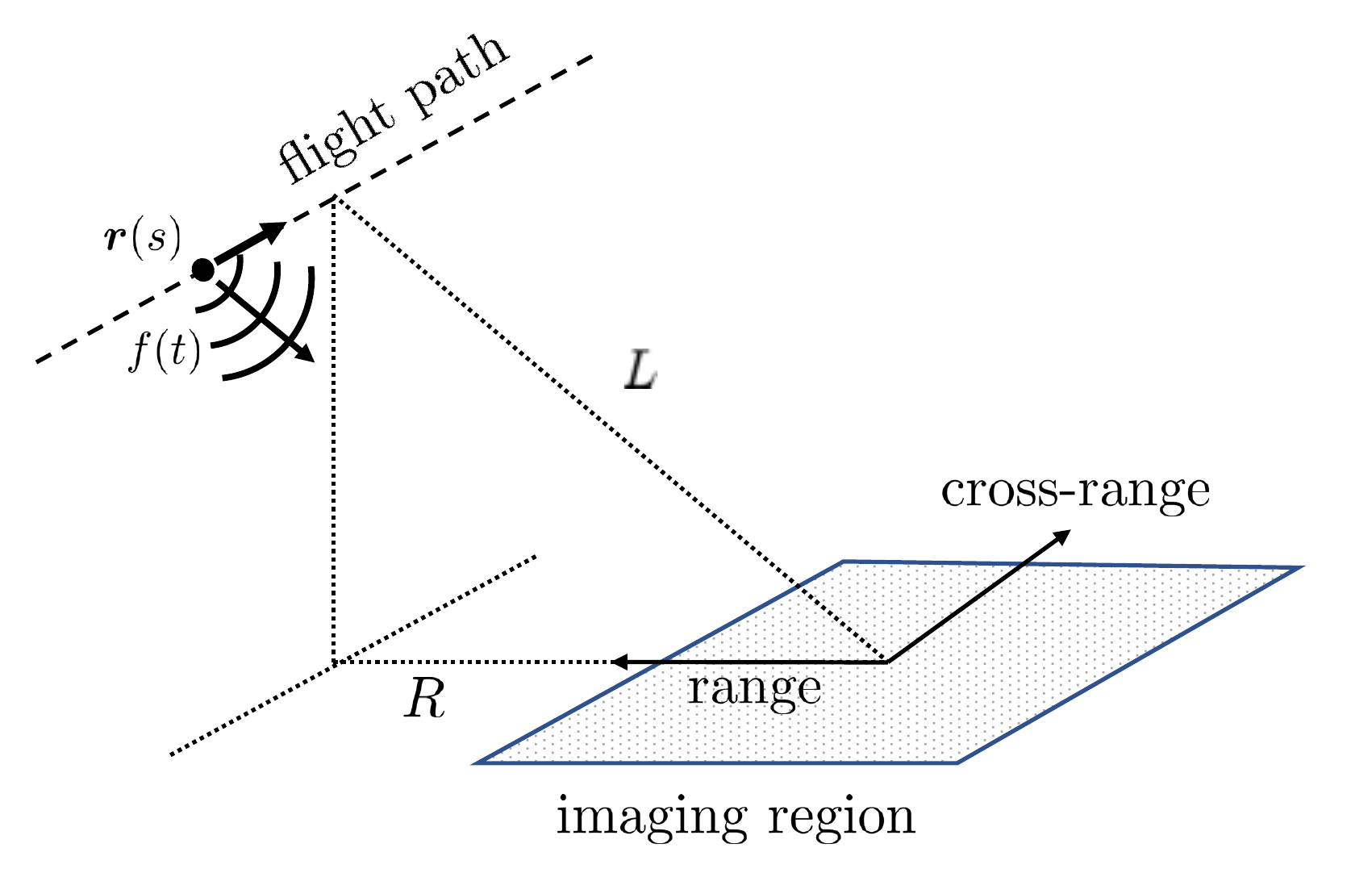}
  \caption{Setup for synthetic aperture radar imaging.}
  \label{fig:schematic}
\end{figure}

We use the start-stop approximation, which is typically done in SAR
imaging.  This approximation assumes that the change in displacement
between the targets and the platform is negligibly small compared to
the travel time it takes for the pulse emitted to propagate to the
imaging scene and return as echoes.  This approximation is valid in
radar since the speed of light is orders of magnitude larger than the
speed of the targets and the platform.

Using this start-stop approximation, we the consider the measurements
only at $N$ discrete values of $s$, corresponding to $d(s_{n},t)$ for
$n = 1, \cdots, N$.  Next, we suppose that $d(s_{n},t)$ is digitally
sampled at $2M-1$ values of $t$. Consequently, these data have a
discrete Fourier transform denoted by $d_{n}(\omega)$ evaluated at
$2M-1$ frequencies denoted by $\omega_{m}$ for $m = 1, \cdots,
2M-1$. This choice of $2M-1$ samples is to make the notation in
Section \ref{sec:prony} simpler.  With these assumptions, we find that
our measurement data is given by the $2M-1 \times N$ matrix $D$ whose
columns are
\begin{equation}
  \mathbf{d}_{n} = 
  \begin{bmatrix}
    d_{n}(\omega_{1}) \\ d_{n}(\omega_{2}) \\ \vdots \\
    d_{n}(\omega_{2M-1})
  \end{bmatrix}, \quad n = 1, \cdots, N.
  \label{eq:Dmatrix}
\end{equation}

\section{Rearranging frequency data}
\label{sec:prony}

The data matrix $D$ is not suitable for direct application of signal
subspace methods. Therefore, we introduce a rearrangement of the data
based on the Prony method~\cite{prony1795essai} which, for the $n$-th
column of $D$, yields the following $M \times M$ matrix,
\begin{equation}
  D_{n} = \begin{bmatrix} 
    d_{n}(\omega_{1}) & d_{n}(\omega_{2}) & \cdots &
    d_{n}(\omega_{M}) \\
    d_{n}(\omega_{2}) & d_{n}(\omega_{3}) & \cdots &
    d_{n}(\omega_{M+1}) \\
    \vdots & \vdots & \ddots & \vdots \\
    d_{n}(\omega_{M}) & d_{n}(\omega_{M+1}) & \cdots &
    d_{n}(\omega_{2M-1})
  \end{bmatrix}.
  \label{eq:Pronyfied}
\end{equation}
In this rearrangement, the first column is the truncation of
$\mathbf{d}_{n}$ to its first $M$ entries. Subsequent columns are
sequential upward shifts of $\mathbf{d}_{n}$ truncated to its first
$M$ entries.

To see why this rearrangement is suitable for signal subspace imaging,
consider the Born approximation for a single point target. Let
$\rho_{0}$ denote the reflectivity of the point target and
$\ypos_{0}$ denote its position. According to the Born
approximation, the scattered field at frequency $\omega_{m}$ is the
spherical wave,
\begin{equation}
  \psi^{s}(\xpos,\omega_{m}) = \rho_{0} \frac{e^{\mathrm{i} \omega_{m}
      | \xpos - \ypos_{0} |/c}}{4 \pi | \xpos - \ypos_{0} |}
  \psi^{inc}(\ypos_{0},\omega_{m}),
\end{equation}
with $c$ denoting the wave speed and
$\psi^{inc}(\ypos_{0},\omega_{m})$ denoting the field incident on the
point target.  Suppose that the signal emitted at position $\xpos_{n}$
on the flight path at frequency $\omega_{m}$ is a spherical wave with
unit amplitude. For that case, the measurement $d_{n}(\omega_{m})$
corresponding to the scattered field evaluated at $\xpos_{n}$ is given
by
\begin{equation}
  d_{n}(\omega_{m}) = \rho_{0} \frac{e^{\mathrm{i} 2 \omega_{m} |
    \xpos_{n} - \ypos_{0} |/c}}{( 4 \pi | \xpos_{n} -
    \ypos_{0} | )^{2} }.
\end{equation}
It follows that
\begin{equation}
  \mathbf{d}_{n} = \frac{\rho_{0}}{( 4 \pi | \xpos_{n} -
    \ypos_{0} |)^{2}}
  \begin{bmatrix}
    e^{\mathrm{i} 2 \omega_{1} | \xpos_{n} - \ypos_{0} |/c} \\
    e^{\mathrm{i} 2 \omega_{2} | \xpos_{n} - \ypos_{0} |/c} \\
    \vdots\\
    e^{\mathrm{i} 2 \omega_{2M-1} | \xpos_{n} - \ypos_{0} |/c}
  \end{bmatrix},
\end{equation}
from which we find that
\begin{equation}
  D_{n} = \frac{\rho_{0}}{( 4 \pi | \xpos_{n} - \ypos_{0}
    |)^{2}}
  \begin{bmatrix}
    e^{\mathrm{i} 2 \omega_{1} | \xpos_{n} - \ypos_{0} |/c} &
    e^{\mathrm{i} 2 \omega_{2} | \xpos_{n} - \ypos_{0} |/c} &
    \cdots &
    e^{\mathrm{i} 2 \omega_{M} | \xpos_{n} - \ypos_{0} |/c} \\
    e^{\mathrm{i} 2 \omega_{2} | \xpos_{n} - \ypos_{0} |/c} &
    e^{\mathrm{i} 2 \omega_{3} | \xpos_{n} - \ypos_{0} |/c} &
    \cdots &
    e^{\mathrm{i} 2 \omega_{M+1} | \xpos_{n} - \ypos_{0} |/c} \\
    \vdots & \vdots & \ddots & \vdots \\
    e^{\mathrm{i} 2 \omega_{M} | \xpos_{n} - \ypos_{0} |/c} &
    e^{\mathrm{i} 2 \omega_{M+1} | \xpos_{n} - \ypos_{0} |/c} &
    \cdots &
    e^{\mathrm{i} 2 \omega_{2M-1} | \xpos_{n} - \ypos_{0} |/c}
  \end{bmatrix}.
  \label{eq:Bn}
\end{equation}
Next, suppose that the frequencies are sampled according to
$\omega_{m} = \omega_{1} + (m-1) \Delta \omega$ for
$m = 1, \cdots, 2M-1$ with $\Delta \omega$ a fixed constant. For that
case, we can rewrite \eqref{eq:Bn} as
$D_{n} = \sigma_{0}^{(n)} \mathbf{u}_{0}^{(n)}
(\mathbf{v}_{0}^{(n)})^{\ast}$ with
$\sigma_{0}^{(n)} = M |\rho_{0}|/(4 \pi |\xpos_{n} - \ypos_{0}|)^{2}$,
\begin{equation}
  \mathbf{u}_{0}^{(n)} = \frac{e^{\mathrm{i}\theta_{0}/2}}{\sqrt{M}}
  \begin{bmatrix}
    e^{\mathrm{i} 2 \omega_{1} | \xpos_{n} - \ypos_{0} |/c} \\
    e^{\mathrm{i} 2 \omega_{2} | \xpos_{n} - \ypos_{0} |/c} \\
    \vdots \\
    e^{\mathrm{i} 2 \omega_{M} | \xpos_{n} - \ypos_{0} |/c}
  \end{bmatrix},
  \quad
  \text{and}
  \quad
  \mathbf{v}_{0}^{(n)} =
    \frac{e^{-\mathrm{i} \theta_{0}/2}}{\sqrt{M}}
    \begin{bmatrix}
      1 \\
      e^{-\mathrm{i} 2 \Delta \omega | \xpos_{n} - \ypos_{0} |/c} \\
      \vdots \\
      e^{-\mathrm{i} 2 (M-1) \Delta \omega | \xpos_{n} -
        \ypos_{0} |/c}
    \end{bmatrix}.
    \label{eq:uv}
\end{equation}
Here, we have written the reflectivity as
$\rho_{0} = |\rho_{0}| e^{\mathrm{i} \theta_{0}}$ and included
$e^{\mathrm{i} \theta_{0}}$ in $\mathbf{u}_{0}^{(n)}$ and
$\mathbf{v}_{0}^{(n)}$ arbitrarily in \eqref{eq:uv}.

Suppose there are $P$ non-interacting point targets in the region with
reflectivities $\rho_{p}$ at positions $\ypos_{p}$ for
$p = 1, \cdots, P$. It follows that
\begin{equation}
  D_{n} = \sum_{p = 1}^{P} \sigma_{p}^{(n)} \mathbf{u}_{p}^{(n)}
  ( \mathbf{v}_{p}^{(n)} )^{\ast}.
  \label{eq:outer-product}
\end{equation}
Here,
$\sigma_{p}^{(n)} = M |\rho_{p}|/(4 \pi |\xpos_{n} - \ypos_{p}|)^{2}$,
and $\mathbf{u}_{p}^{(n)}$ and $\mathbf{v}_{p}^{(n)}$ are defined just
like $\mathbf{u}_{0}^{(n)}$ and $\mathbf{v}_{0}^{(n)}$ in
\eqref{eq:uv}, but evaluated on $|\xpos_{n} - \ypos_{p}|$ instead.
This expression for $D_{n}$ is a sum of $P$ outer products, each of
which corresponds to an individual point target. This outer product
representation for $D_{n}$ indicates that signal subspace methods may
be effectively used on these matrices for imaging.

\section{Quantitative signal subspace method}
\label{sec:method}

To combine the matrices formed using the Prony method, we consider the
$MN \times MN$ block diagonal matrix,
\begin{equation}
  D_{\text{Prony}} = \begin{bmatrix}
    D_{1} & & & \\
    & D_{2} & & \\
    & & \ddots & \\
    & & & D_{N}
  \end{bmatrix}.
  \label{eq:D-Prony}
\end{equation}
Using the outer-product structure identified in
\eqref{eq:outer-product}, we can extend a recently developed
quantitative signal subspace imaging
method~\cite{gonzalez2021quantitative} to this block-diagonal Prony
matrix as follows.

Suppose we compute the singular value decomposition for each block:
$D_{n} = U_{n} \Sigma_{n} V_{n}^{\ast}$ for $n = 1, \cdots,
N$. According to \eqref{eq:outer-product}, for $P$ point targets, the
rank of each block will be $P$. Assuming that the signal-to-noise
ratio (SNR) is sufficiently high, the first $P$ singular values
residing in the diagonal entries of $\Sigma_{n}$ will be significantly
larger than the others. Those first $P$ singular values correspond to
the signal subspace. The remaining singular values correspond to the
noise subspace.  Since we can separate the first $P$ singular values,
we are able to compute the pseudo-inverse,
\begin{equation}
  \Sigma_{n}^{+} = \frac{1}{\sigma_{1}} \text{diag}\left( 1,
    \frac{\sigma_{2}}{\sigma_{1}}, \cdots,
    \frac{\sigma_{P}}{\sigma_{1}}, \frac{1}{\epsilon}, \cdots,
    \frac{1}{\epsilon} \right),
  \label{eq:sigma-plus}
\end{equation}
with $\epsilon > 0$ denoting a user-defined parameter.

For search point $\ypos$ in the imaging region, we introduce the
illumination block-vector
\begin{equation}
  \mathbf{a}(\ypos) = \begin{bmatrix}
    \mathbf{a}_{1}(\ypos) \\ \vdots \\ \mathbf{a}_{N}(\ypos)
  \end{bmatrix},
  \qquad
  \mathbf{a}_{n}(\ypos) = \frac{1}{4 \pi |\xpos_{n} -
    \ypos|}
  \begin{bmatrix}
    e^{\mathrm{i} 2 \omega_{1} | \xpos_{n} - \ypos |/c} \\
    \vdots \\
    e^{\mathrm{i} 2 \omega_{M} | \xpos_{n} - \ypos |/c}
  \end{bmatrix}.
  \label{eq:a-vector}
\end{equation}
Using \eqref{eq:sigma-plus} and \eqref{eq:a-vector}, we compute
the following imaging functional
\begin{equation}
  F_{\epsilon}(\ypos)
  = \frac{1}{N} \mathbf{a}^{\ast}(\ypos) 
  \begin{bmatrix}
    U_{1} \Sigma_{1}^{+} U_{1}^{\ast} & & \\
    & \ddots & \\
    & & U_{N} \Sigma_{N}^{+} U_{N}^{\ast}
  \end{bmatrix}
  \mathbf{a}(\ypos)
  = \frac{1}{N} \sum_{n = 1}^{N} \mathbf{a}_{n}^{\ast}(\ypos) U_{n}
  \Sigma_{n}^{+} U_{n}^{\ast} \mathbf{a}_{n}(\ypos).
  \label{eq:F-function}
\end{equation}
We also consider another imaging functional. For that imaging
functional, we introduce the complimentary illumination block-vector,
\begin{equation}
  \mathbf{b}(\ypos) = \begin{bmatrix}
    \mathbf{b}_{1}(\ypos) \\ \vdots \\ \mathbf{b}_{N}(\ypos)
  \end{bmatrix},
  \qquad
  \mathbf{b}_{n}(\ypos) = \frac{1}{4 \pi |\xpos_{n} - \ypos|}
  \begin{bmatrix}
    1 \\
    e^{-\mathrm{i} 2 \Delta \omega | \xpos_{n} - \ypos |/c} \\
    \vdots \\
    e^{-\mathrm{i} 2 (M-1) \Delta \omega | \xpos_{n} - \ypos |/c}
  \end{bmatrix},
  \label{eq:b-vector}
\end{equation}
and compute
\begin{equation}
  R_{\epsilon}(\ypos)
  = \frac{1}{N} \mathbf{b}^{\ast}(\ypos) 
  \begin{bmatrix}
    V_{1} \Sigma_{1}^{+} U_{1}^{\ast} & & \\
    & \ddots & \\
    & & V_{N} \Sigma_{N}^{+} U_{N}^{\ast}
  \end{bmatrix}
  \mathbf{a}(\ypos)
  = \frac{1}{N} \sum_{n = 1}^{N} \mathbf{b}_{n}^{\ast}(\ypos) V_{n}
  \Sigma_{n}^{+} U_{n}^{\ast} \mathbf{a}_{n}(\ypos).
  \label{eq:R-function}
\end{equation}
We form images through evaluation of $1/F_{\epsilon}(\ypos)$ and
$1/R_{\epsilon}(\ypos)$ over an imaging region. We show below that the
image formed using $F_{\epsilon}$ is useful for determining the
location and magnitude of reflectivities for point targets and the
image formed using $R_{\epsilon}$ is useful for determining the
complex reflectivities for point targets.

\begin{figure}[t]
  \centering
  \includegraphics[width=0.48\linewidth]{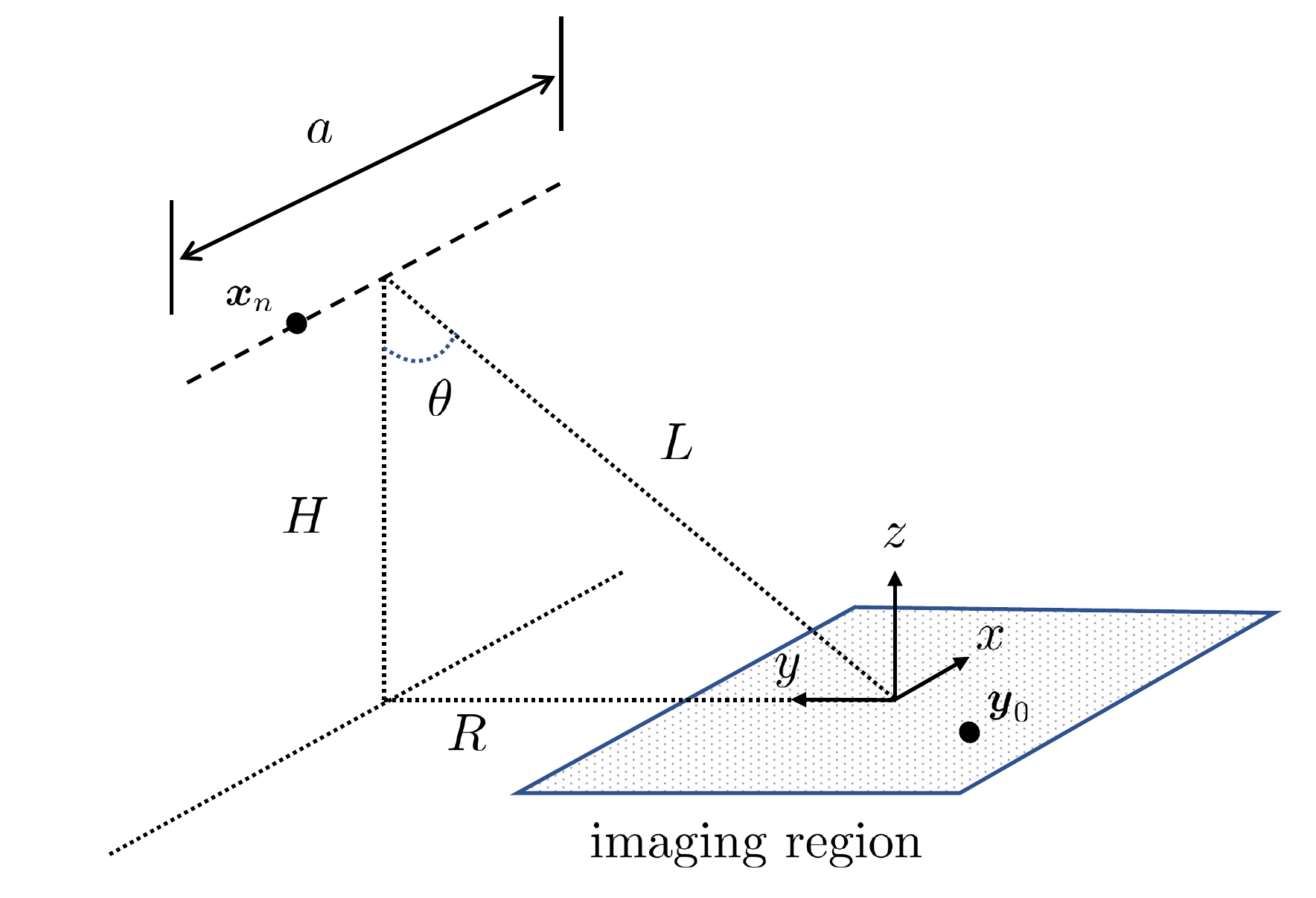}
  \caption{Sketch of the linear flight path over the planar imaging
    region used to study the resolution of the imaging method.}
  \label{fig:linear-flight-path}
\end{figure}

\section{Resolution analysis}
\label{sec:resolution}

To study the performance of imaging using \eqref{eq:F-function} and
\eqref{eq:R-function}, we consider one point target located in a
planar imaging region at position $\ypos_{0}$ with complex
reflectivity $\rho_{0}$. We use a coordinate system in which the
origin lies at the center of the planar imaging region.  The flight
path of the platform is linear and parallel to the $x$-axis. It is
offset from the origin along the $y$-axis by range $R$ and along the
$z$-axis by height $H$. Thus, spatial positions of the measurements
are $\xpos_{n} = ( x_{n}, R, H )$ for $n = 1, \cdots, N$ with
$x_{n} = -a/2 + a (n-1)/(N-1)$ and $a$ denoting the aperture. Let
$L = \sqrt{R^{2} + H^{2}}$ denote the distance from the center of the
flight path to the origin, and let $\theta$ denote the so-called look
angle with $\sin\theta = R/L$ and $\cos\theta = H/L$. We assume that
$L$ is the largest length scale in this problem. A sketch of this
linear flight path over a planar imaging region is shown in
Fig.~\ref{fig:linear-flight-path}.

To establish estimates for the resolution of image of one point target
produced through evaluation of $1/F_{\epsilon}(\ypos)$ over an imaging
region, let $\ypos_{0} = ( x_{0}, y_{0}, 0 )$ denote the target
location, $\ypos = ( x, y, 0)$ denote the search location, and $B$
denote the system bandwidth centered at frequency $f_{0}$ with
corresponding wavenumber $k_{0} = 2 \pi f_{0}/c$. With these
quantities defined, we prove the following theorem.

\begin{theorem}[Resolution estimates for a linear flight
  path]\label{thm:resolution}
  Assuming that the SNR is sufficiently high that we can distinguish
  the singular values corresponding to the signal subspace from those
  corresponding to the noise subspace, $1/F_{\epsilon}(\ypos)$ with
  $F_{\epsilon}(\ypos)$ given in \eqref{eq:F-function} attains a
  maximum of $|\rho_{0}|$ on $\ypos = \ypos_{0}$, and in the
  asymptotic limit, $L \gg 1$, $x_{0}/a \ll 1$, $y_{0}/R \ll 1$, and
  $\epsilon \ll 1$, this image has a cross-range resolution of
  $\Delta x^{\ast} = O(\sqrt{\epsilon} (c/B) (L/a) )$ and a range
  resolution of $\Delta y^{\ast} = O(\sqrt{\epsilon} (c/B) (L/R) )$.
\end{theorem}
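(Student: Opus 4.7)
The plan is to exploit the rank-one structure of each block $D_n$ in the single-target case. From \eqref{eq:Bn}--\eqref{eq:uv}, $D_n = \sigma_0^{(n)} \mathbf{u}_0^{(n)} (\mathbf{v}_0^{(n)})^\ast$, so under the SNR assumption only one signal singular value is retained and the $n$-th diagonal block of \eqref{eq:F-function} becomes $U_n \Sigma_n^+ U_n^\ast = (1/\sigma_0^{(n)})[P_n + \epsilon^{-1}(I_M - P_n)]$, where $P_n = \mathbf{u}_0^{(n)}(\mathbf{u}_0^{(n)})^\ast$ projects onto the signal direction. Plugging this into \eqref{eq:F-function} gives $F_\epsilon(\ypos) = \frac{1}{N}\sum_n (\sigma_0^{(n)})^{-1}\bigl[|\mathbf{a}_n^\ast(\ypos)\mathbf{u}_0^{(n)}|^2 + \epsilon^{-1}(\|\mathbf{a}_n(\ypos)\|^2 - |\mathbf{a}_n^\ast(\ypos)\mathbf{u}_0^{(n)}|^2)\bigr]$. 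At $\ypos = \ypos_0$, the formulas \eqref{eq:uv} and \eqref{eq:a-vector} show that $\mathbf{a}_n(\ypos_0) = (\sqrt{M}\,e^{-\mathrm{i}\theta_0/2}/(4\pi|\xpos_n - \ypos_0|))\,\mathbf{u}_0^{(n)}$, so the bracket collapses to $\|\mathbf{a}_n(\ypos_0)\|^2 = M/(4\pi|\xpos_n - \ypos_0|)^2$, which cancels $\sigma_0^{(n)}$ to yield $F_\epsilon(\ypos_0) = 1/|\rho_0|$, and hence $1/F_\epsilon(\ypos_0) = |\rho_0|$.

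For $\ypos \neq \ypos_0$, I approximate the slowly varying prefactors $\|\mathbf{a}_n(\ypos)\|$ and $\sigma_0^{(n)}$ by their values at the target, a replacement that is valid because $L$ is the largest length scale. The inner product $\mathbf{a}_n^\ast(\ypos)\mathbf{u}_0^{(n)}$ then becomes a geometric sum in $m$ with common ratio $e^{\mathrm{i}\Delta\omega\,\tau_n}$, where $\tau_n = (2/c)(|\xpos_n - \ypos_0| - |\xpos_n - \ypos|)$ is the travel-time mismatch. Its squared modulus is a Dirichlet kernel; setting $g(\tau) = M^{-2}|\sin(M\Delta\omega\,\tau/2)/\sin(\Delta\omega\,\tau/2)|^2 \in [0,1]$ and $G(\ypos) = N^{-1}\sum_n g(\tau_n)$, the functional simplifies to $|\rho_0|\,F_\epsilon(\ypos) \approx G(\ypos) + \epsilon^{-1}(1 - G(\ypos))$, which is minimized at $\ypos_0$ (where $G = 1$) and increases monotonically as $G$ decreases.

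Under the asymptotic regime $L \gg 1$, $x_0/a \ll 1$, $y_0/R \ll 1$, the paraxial expansion $|\xpos_n - \ypos_0| - |\xpos_n - \ypos| \approx (x_n\xi + R\eta)/L$, with $\xi = x - x_0$ and $\eta = y - y_0$, yields $\tau_n \approx 2(x_n\xi + R\eta)/(cL)$. Taylor-expanding $g$ about $\tau = 0$ produces $1 - g(\tau_n) = \alpha(M\Delta\omega\,\tau_n)^2 + O(\tau_n^4)$ for a universal constant $\alpha$; averaging over the uniformly spaced $x_n \in [-a/2, a/2]$ (so $\overline{x_n} = 0$ and $\overline{x_n^2} \sim a^2/12$) kills the $\xi\eta$ cross term and gives $1 - G(\ypos) \sim (B/(cL))^2 [a^2\xi^2 + R^2\eta^2]$, since $M\Delta\omega$ is proportional to $B$. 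Because $1/F_\epsilon$ drops substantially below its peak when $1 - G(\ypos) \sim \epsilon$, equating the cross-range contribution $(Ba\xi/(cL))^2 \sim \epsilon$ yields $\Delta x^\ast = O(\sqrt{\epsilon}\,(c/B)(L/a))$, and equating the range contribution $(BR\eta/(cL))^2 \sim \epsilon$ yields $\Delta y^\ast = O(\sqrt{\epsilon}\,(c/B)(L/R))$, which are the claimed resolutions.

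The main technical obstacle is controlling the approximations uniformly over the resolution window: one must verify that the paraxial linearization of travel times, the replacement of the smooth prefactors $\|\mathbf{a}_n(\ypos)\|$ and $\sigma_0^{(n)}$ by their values at $\ypos_0$, and the quadratic Taylor expansion of the Dirichlet kernel all contribute corrections of strictly higher order than the retained $O(\epsilon)$ term in $1 - G$. A secondary subtlety is the clean decoupling of the cross-range and range contributions after the $n$-average; this relies on $\overline{x_n} = 0$ to suppress the mixed $x_n\xi\,R\eta$ term, which is precisely what forces the two different geometric dependences $L/a$ and $L/R$ in the final resolution estimates.
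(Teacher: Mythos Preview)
Your proof is correct and follows essentially the same approach as the paper: both exploit the rank-one structure of $D_n$ to rewrite $F_\epsilon$ in terms of a Dirichlet kernel in the travel-time mismatch $\tau_n$, Taylor-expand that kernel to quadratic order, apply the paraxial/Fresnel approximation of $\tau_n$, and read off the FWHM scalings from the condition that the $\epsilon^{-1}(1-G)$ term becomes order one. The only presentational difference is that the paper analyzes cross-range and range separately (restricting to $y=y_0$ and $x=x_0$ in turn), whereas you carry the joint quadratic form and use $\overline{x_n}=0$ to kill the cross term and decouple the two directions.
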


\begin{proof}
  For a single point target, we have
  $D_{n} = \sigma_{0}^{(n)} \mathbf{u}_{0}^{(n)} \mathbf{v}_{0}^{(n)
    \ast}$ for $n = 1, \cdots, N$ with
  $\sigma_{0}^{(n)} = M |\rho_{0}|/(4 \pi |\xpos_{n} -
  \ypos_{0}|)^{2}$ and $\mathbf{u}_{0}^{(n)}$ and
  $\mathbf{v}_{0}^{(n)}$ given in \eqref{eq:uv}. Consequently, the
  column space of $D_{n}$ is
  $\mathcal{C}(D_{n}) = \text{span}\{ \mathbf{u}_{0}^{(n)} \}$ and
  $P^{(n)} = I - \mathbf{u}_{0}^{(n)} \mathbf{u}_{0}^{(n)\ast}$ is the
  projection onto subspace orthogonal to $\mathcal{C}(D_{n})$. Using
  \begin{equation*}
    \Sigma_{n}^{+} = \frac{1}{\sigma_{0}^{(n)}} \text{diag}\left( 1,
      \frac{1}{\epsilon}, \cdots, \frac{1}{\epsilon} \right),
  \end{equation*}
  we find that
  \begin{align}
    F_{\epsilon}(\ypos)
    &= \frac{1}{N} \sum_{n = 1}^{N} \left[ \frac{1}{\epsilon
      \sigma_{0}^{(n)}} \mathbf{a}_{n}^{\ast}(\ypos) (I -
      \mathbf{u}_{0}^{(n)} \mathbf{u}_{0}^{(n) \ast} )
      \mathbf{a}_{n}(\ypos) + \frac{1}{\sigma_{0}^{(n)}}
      \mathbf{a}_{n}^{\ast}(\ypos) \mathbf{u}_{0}^{(n)}
      \mathbf{u}_{0}^{(n) \ast} \mathbf{a}_{n}(\ypos) \right] \nonumber\\
    &= \frac{1}{|\rho_{0}|} \frac{1}{N} \sum_{n =
      1}^{N} \left[ \left( \frac{1}{\epsilon} - \left(
      \frac{1}{\epsilon} - 1 \right) | \Phi_{n} |^{2} \right)
      \frac{| \xpos_{n} - \ypos_{0} |^{2}}{|\xpos_{n} - \ypos |^{2}}
      \right],
      \label{eq:F-epsilon}
  \end{align}
  where we have introduced the quantity,
  \begin{equation}
    \Phi_{n} = \frac{1}{M} \sum_{m = 1}^{M} e^{\mathrm{i} 
      \omega_{m} \varDelta \tau_{n} },
    \label{eq:Phi_n}
  \end{equation}
  with
  $\varDelta \tau_{n} = 2 ( |\xpos_{n} - \ypos | - | \xpos_{n} -
  \ypos_{0} | )/c$ denoting the difference in travel times for the
  search and target locations.

  Evaluating \eqref{eq:F-epsilon} on $\ypos_{0}$, we find that
  $F_{\epsilon}(\ypos_{0}) = 1/|\rho_{0}|$, so
  $1/F_{\epsilon}(\ypos_{0}) = |\rho_{0}|$.  Because
  $|\Phi_{n}|^{2} \le 1$ and
  $|\xpos_{n} - \ypos_{0}|^{2}/|\xpos_{n} - \ypos|^{2} \le 1$ with
  both functions evaluating to $1$ only at $\ypos = \ypos_{0}$, this
  result corresponds the maximum value that $1/F_{\epsilon}(\ypos)$
  attains.

  Let
  \[
    \omega_{m} = \omega_{0} \left[ 1 + \beta \left( \frac{m-1}{M-1} -
        \frac{1}{2} \right) \right], \quad m = 1, \cdots, M,
  \]
  with $\beta = 2 \pi B/\omega_{0}$ denoting the fraction of the
  bandwith about the central frequency. Substituting these frequencies
  into \eqref{eq:Phi_n} and computing the sum, we find
  \[
    \Phi_{n} = \frac{e^{\mathrm{i} \omega_{0} (1 - \beta/2)
      \varDelta\tau_{n}}}{M} \frac{1 - e^{\mathrm{i} \omega_{0} \beta M
        \varDelta \tau_{n}/(M-1)}}{1 - e^{\mathrm{i} \omega_{0} \beta
        \varDelta \tau_{n}/(M-1)}},
  \]
  from which it follows that
  \[
    |\Phi_{n}|^{2} = \frac{1}{M^{2}} \frac{\sin^{2}\left( \frac{\pi M
          B \varDelta\tau_{n}}{M-1} \right)}{\sin^{2}\left( \frac{\pi
          B \varDelta\tau_{n}}{M-1} \right)}.
  \]
  In the expression above, we have resubstituted
  $\omega_{0} \beta = 2 \pi B$. Assuming we are in a small
  neighborhood about the target location, we expand the expression
  above about $\varDelta \tau_{n} = 0$ and obtain
  \[
    |\Phi_{n}|^{2} = 1 - \frac{\pi^{2} B^{2}}{3} \frac{M+1}{M-1}
    \varDelta \tau_{n}^{2} + O(\varDelta \tau_{n}^{4}).
  \]
  Additionally, we find that
  \[
    \frac{|\xpos_{n} - \ypos_{0}|}{|\xpos_{n} - \ypos|} = \frac{(x_{n}
      - x_{0})^{2} + (L \sin\theta - y_{0})^{2} + (L
      \cos\theta)^{2}}{(x_{n} - x)^{2} + (L \sin\theta
      - y)^{2} + (L \cos\theta)^{2}} = 1 + O(L^{-1}),
  \]
  where $\sin\theta = R/L$ and $\cos\theta = H/L$. Using these
  approximations, we find that
  \begin{equation}
    F_{\epsilon}(\ypos) = \frac{1}{|\rho_{0}|} \left[
      1 + \left( \frac{1}{\epsilon} - 1 \right)
      \frac{\pi^{2} B^{2}}{3} \frac{M+1}{M-1} \left( \frac{1}{N} \sum_{n =
        1}^{N} \varDelta \tau_{n}^{2} \right) \right] + O(\varDelta
    \tau_{n}^{4}, L^{-1}).
    \label{eq:F-local}
  \end{equation}
  Next, we use
  \begin{multline}
    \varDelta \tau_{n} = (|\xpos_{n} - \ypos| - |\xpos_{n} -
    \ypos_{0}|)/c
    =  \cos\theta \frac{(y - y_{0})}{c}\\
    + \frac{(x - x_{0})^{2} - 2 (x - x_{0} ) ( \xi_{n} - x_{0})}{2 c L} +
    \sin^{2}\theta \frac{( y - y_{0} )^{2} + 2 ( y - y_{0} ) y_{0}}{2 c L}
    + O(L^{-2}).
    \label{eq:Fresnel}
  \end{multline}
  For the cross-range resolution, we evaluate \eqref{eq:Fresnel} on
  $y = y_{0}$ and find that
  \[
    \varDelta \tau_{n}^{2} \big|_{y = y_{0}} \sim \frac{(x -
      x_{0})^{2}}{c^{2} L^{2}} ( x_{n} - x_{0} )^{2}.
  \]
  Using $x_{n} = -a/2 + a (n-1)/(N+1)$, we find that
  \[
    \frac{1}{N} \sum_{n = 1}^{N} ( x_{n} - x_{0} )^{2} =
    \frac{a^{2}}{12} \frac{N + 1}{N - 1} + x_{0}^{2},
  \]
  and so
  \[
    F_{\epsilon}(x,y_{0}) \sim \frac{1}{|\rho_{0}|} \left[
      1 + \left( \frac{1}{\epsilon} - 1 \right)
      \frac{\pi^{2} B^{2} a^{2}}{3 c^{2} L^{2}}  \frac{M+1}{M-1}
      \left( \frac{1}{12} \frac{N + 1}{N - 1} +
        \frac{x_{0}^{2}}{a^{2}} \right) ( x - x_{0} )^{2} \right].
  \]
  The full-width/half-maximum (FWHM) in cross-range $\Delta x^{\ast}$
  satisfies
  $1/F_{\epsilon}(x_{0} + \Delta x^{\ast},y_{0}) = 1/(2
  |\rho_{0}|)$. Substituting the approximation above into this
  definition, solving for $\Delta x^{\ast}$, and expanding that
  result about $\epsilon = 0$, we find
  \begin{align*}
    \Delta x^{\ast} &= \pm \sqrt{\epsilon} \frac{c}{B} \frac{L}{a}
                      \frac{6}{\pi} \sqrt{\frac{M-1}{M+1}}
                      \sqrt{\frac{N-1}{(N+1) + 12 (N-1)
                      (x_{0}^{2}/a^{2})}} + O(\epsilon^{3/2})\\
                    &= \pm \sqrt{\epsilon} \frac{c}{B} \frac{L}{a}
                      \frac{6}{\pi} \sqrt{\frac{M-1}{M+1}}
                      \sqrt{\frac{N-1}{N+1}} +
                      O\left(\epsilon^{3/2},\frac{x_{0}^{2}}{a^{2}} \right)\\
                    &= O\left( \sqrt{\epsilon} \frac{c}{B}
                      \frac{L}{a} \right).
  \end{align*}
  For the range resolution, we evaluate \eqref{eq:Fresnel} on
  $x = x_{0}$ and find that
  \[
    \varDelta \tau_{n}^{2} \big|_{x = x_{0}} = \frac{( y - y_{0}
      )^{2}}{c^{2}} \left( \cos\theta - \sin^{2}\theta
      \frac{y_{0}}{L} \right)^{2}.
  \]
  It follows that
  \[
    F_{\epsilon}(x_{0},y) \sim \frac{1}{|\rho_{0}|} \left[
      1 + \left( \frac{1}{\epsilon} - 1 \right) \frac{\pi^{2} B^{2}}{3
        c^{2}} \frac{M+1}{M-1} \left( \cos\theta - \sin^{2}\theta
        \frac{y_{0}}{L} \right)^{2} ( y - y_{0} )^{2} \right].
  \]
  The full-width/half-maximum (FWHM) in range $\Delta y^{\ast}$
  satisfies
  $1/F_{\epsilon}(x_{0},y_{0} + \Delta y^{\ast}) = 1/(2
  |\rho_{0}|)$. Substituting the approximation above into this
  definition, resubstituting $\cos\theta = R/L$, solving for
  $\Delta y^{\ast}$, and expanding that result about $\epsilon = 0$,
  we find
  \begin{align*}
      \Delta y^{\ast} &= \pm \sqrt{\epsilon} \frac{c}{B} \frac{L}{R}
                        \frac{1}{1 - \sin^{2}\theta (y_{0}/R)}
                        \frac{\sqrt{3}}{\pi} \sqrt{\frac{M-1}{M+1}} +
                        O(\epsilon^{3/2}) \\
                      &= \pm \frac{\sqrt{3}}{\pi} \sqrt{\epsilon}
                        \frac{c}{B} \frac{L}{R} +
                        O\left( \epsilon^{3/2},\frac{y_{0}}{R}
                        \right)\\
                      &= O\left( \sqrt{\epsilon} \frac{c}{B}
                        \frac{L}{R} \right).
  \end{align*}
  This completes the proof.
\end{proof}

Theorem \ref{thm:resolution} states that images of a point target
formed through evaluation of $1/F_{\epsilon}(\ypos)$ with
$F_{\epsilon}(\ypos)$ given in \eqref{eq:F-function} will form an
image that is peaked at the location of the target with magnitude
equal to $|\rho_{0}|$. Because the user-defined parameter $\epsilon$
can be made arbitrarily small, this imaging method will yield
high-resolution images provided that there is sufficient signal that
the non-trivial singular values provide accurate quantitative data.

In general, the reflectivity of a point target is complex. To recover
the complex reflectivity, we make use of the following theorem.

\begin{theorem}[Recovery of the complex
  reflectivity]\label{thm:reflectivity}
  For a point target located at $\ypos_{0}$ with complex reflectivity
  $\rho_{0}$, when the SNR is sufficient high that we can distinguish
  the signal subspace from the noise subspace,
  $1/R_{\epsilon}(\ypos_{0}) = \rho_{0}$ with $R_{\epsilon}(\ypos)$
  given in \eqref{eq:R-function}.
\end{theorem}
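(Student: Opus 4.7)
The plan is to exploit the fact that for a single point target, each block $D_n$ has rank one with a known outer-product representation, so both the illumination vectors $\mathbf{a}_n(\ypos_0)$ and $\mathbf{b}_n(\ypos_0)$ align exactly with the singular vectors of $D_n$. This makes the quadratic form in $R_\epsilon(\ypos_0)$ collapse to the scalar $1/\sigma_0^{(n)}$ times phase factors that reconstitute $\rho_0$.

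First I would write, for a single target, $D_n = \sigma_0^{(n)} \mathbf{u}_0^{(n)} (\mathbf{v}_0^{(n)})^{\ast}$ so that the SVD satisfies $U_n e_1 = \mathbf{u}_0^{(n)}$ and $V_n e_1 = \mathbf{v}_0^{(n)}$ with leading singular value $\sigma_0^{(n)} = M|\rho_0|/(4\pi|\xpos_n - \ypos_0|)^2$. Comparing the definitions in \eqref{eq:uv}, \eqref{eq:a-vector}, and \eqref{eq:b-vector} entrywise at $\ypos = \ypos_0$, I would verify the two identifications
\begin{equation*}
  \mathbf{a}_n(\ypos_0) = \frac{\sqrt{M}\, e^{-\mathrm{i}\theta_0/2}}{4\pi|\xpos_n - \ypos_0|}\,\mathbf{u}_0^{(n)},
  \qquad
  \mathbf{b}_n(\ypos_0) = \frac{\sqrt{M}\, e^{\mathrm{i}\theta_0/2}}{4\pi|\xpos_n - \ypos_0|}\,\mathbf{v}_0^{(n)},
\end{equation*}
where the phase factor $e^{\pm \mathrm{i}\theta_0/2}$ inserted arbitrarily in \eqref{eq:uv} is precisely what produces the complementary phases on the two illumination vectors.

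Next I would use orthonormality of the columns of $U_n$ and $V_n$ to obtain $\mathbf{v}_0^{(n) \ast} V_n = e_1^{T}$ and $U_n^{\ast} \mathbf{u}_0^{(n)} = e_1$, so that
\begin{equation*}
  \mathbf{v}_0^{(n) \ast} V_n \Sigma_n^{+} U_n^{\ast} \mathbf{u}_0^{(n)}
  = e_1^{T} \Sigma_n^{+} e_1 = \frac{1}{\sigma_0^{(n)}}.
\end{equation*}
Crucially, the noise-subspace entries of $\Sigma_n^{+}$ (the $1/\epsilon$ terms) never enter, since both one-sided projections pick out only the leading signal direction — this is what distinguishes $R_\epsilon$ from $F_\epsilon$ and is the reason the result is independent of $\epsilon$.

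Combining the two displays, each summand in \eqref{eq:R-function} evaluates to
\begin{equation*}
  \mathbf{b}_n^{\ast}(\ypos_0) V_n \Sigma_n^{+} U_n^{\ast} \mathbf{a}_n(\ypos_0)
  = \frac{M\, e^{-\mathrm{i}\theta_0}}{(4\pi|\xpos_n - \ypos_0|)^2} \cdot \frac{1}{\sigma_0^{(n)}}
  = \frac{e^{-\mathrm{i}\theta_0}}{|\rho_0|},
\end{equation*}
independent of $n$, so averaging over $n$ yields $R_\epsilon(\ypos_0) = e^{-\mathrm{i}\theta_0}/|\rho_0| = 1/\rho_0$ and hence $1/R_\epsilon(\ypos_0) = \rho_0$. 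The only genuinely delicate point is keeping careful track of the arbitrary phase split between $\mathbf{u}_0^{(n)}$ and $\mathbf{v}_0^{(n)}$ in \eqref{eq:uv}; once that bookkeeping is done, the rest is a direct computation using only the rank-one structure and SVD orthonormality, and no asymptotic approximation is required.
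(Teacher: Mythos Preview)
Your proof is correct and follows the same approach as the paper's own proof, which simply states that direct evaluation of \eqref{eq:R-function} at $\ypos = \ypos_0$ yields $R_\epsilon(\ypos_0) = e^{-\mathrm{i}\theta_0}/|\rho_0|$; you have carefully unpacked that computation by tracking how the illumination vectors align with the singular vectors and how the phase factors recombine, which is exactly the content behind the paper's two-sentence argument.
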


\begin{proof}
  Through direct evaluation of $R_{\epsilon}$ given in
  \eqref{eq:R-function} on $\ypos = \ypos_{0}$, we find
  $R_{\epsilon}(\ypos_{0}) = e^{-\mathrm{i} \theta_{0}}/|\rho_{0}|$.
  It follows that
  $1/R_{\epsilon}(\ypos_{0}) = |\rho_{0}| e^{\mathrm{i} \theta_{0}} =
  \rho_{0}$.
\end{proof}

Although Theorem \ref{thm:reflectivity} states that evaluating
$1/R_{\epsilon}(\ypos)$ yields the complex reflectivity, it is not
generally useful for determining the location of the target because
this function does not exhibit localized behavior that indicates the 
region about the target location. For this reason, we propose the
following two-stage imaging method.
\begin{enumerate}

\item[(i)] Evaluate $1/F_{\epsilon}(\ypos)$ with $F_{\epsilon}(\ypos)$
  given in \eqref{eq:F-function} to determine the location of
  targets. The value of $\epsilon$ may be varied to adjust the
  resolution of this image.

\item[(ii)] Evaluate $1/R_{\epsilon}(\ypos)$ with
  $R_{\epsilon}(\ypos)$ given in \eqref{eq:R-function} using the
  locations determined in (i) to determine the complex reflectivities
  of the targets.

\end{enumerate}

\section{Travel time uncertainty}
\label{sec:random}

We now consider the effect of uncertainty in the travel times on
images formed through evaluation of $1/F_{\epsilon}(\ypos)$ with
$F_{\epsilon}(\ypos)$ given in \eqref{eq:F-function}. Uncertainty in
travel times can arise from sampling clock jitter, deviations from the
assumed flight path, and random fluctuations in the propagating medium
among other practical issues. It is therefore important to understand
to what extent images formed using the method described above are
useful under uncertain conditions.

To model travel time uncertainty, we use
\begin{equation}
  \varDelta \tau_{n} = \varDelta \tau_{n}^{0} + \nu_{n}, \quad n = 1,
  \cdots, N.
  \label{eq:travel-time-model}
\end{equation}
with $\varDelta \tau_{n}^{0}$ denoting the difference in travel times
for a homogeneous medium and the vector,
$\boldsymbol{\nu} = (\nu_{1}, \cdots, \nu_{N})$ denoting a
multivariate distribution with $\mathbb{E}[\nu_{n}] = 0$ and
$\mathbb{E}[\nu_{n}^{2}] = \sigma_{n}^{2}$ for $n = 1, \cdots, N$. Let
$\sigma^{2} = \max \{ \sigma_{1}^{2}, \cdots \sigma_{N}^{2} \}$. Using
this model for travel time uncertainty, we prove the following
theorem.

\begin{theorem}[Travel time uncertainty]\label{thm:random}
  Assuming that the SNR is sufficiently high that we can distinguish
  the singular values corresponding to the signal subspace from those
  corresponding to the noise subspace, the image formed through
  evaluation of $1/F_{\epsilon}(\ypos)$ in a neighborhood about
  $\ypos = \ypos_{0}$ with $F_{\epsilon}(\ypos)$ given in
  \eqref{eq:F-function} and using \eqref{eq:travel-time-model} with
  $\sigma^{2}/\epsilon \ll 1$ has an expected value whose leading
  behavior is the result for the homogeneous medium plus a term that
  is $O(\sigma^{2}/\epsilon)$, and has a variance that is
  $O(\sigma^{2}/\epsilon)$.
\end{theorem}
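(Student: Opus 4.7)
The plan is to leverage the explicit formula for $F_{\epsilon}(\ypos)$ derived in the proof of Theorem~\ref{thm:resolution}, where the stochasticity enters only through the quantity $\Phi_{n}$ from \eqref{eq:Phi_n}. Substituting the travel-time model \eqref{eq:travel-time-model} into the quadratic approximation $|\Phi_{n}|^{2} \approx 1 - c_{1}\varDelta\tau_{n}^{2}$ (with $c_{1} = (\pi^{2}B^{2}/3)(M+1)/(M-1)$), and expanding $\varDelta\tau_{n}^{2} = (\varDelta\tau_{n}^{0})^{2} + 2\varDelta\tau_{n}^{0}\nu_{n} + \nu_{n}^{2}$, reduces the theorem to moment calculations on low-degree polynomials in $\nu_{n}$.

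For the expected value, I would take expectations term by term using $\mathbb{E}[\nu_{n}]=0$ and $\mathbb{E}[\nu_{n}^{2}]=\sigma_{n}^{2}$ to obtain $\mathbb{E}[|\Phi_{n}|^{2}] = 1 - c_{1}\bigl((\varDelta\tau_{n}^{0})^{2}+\sigma_{n}^{2}\bigr)$ up to higher-order terms. Substituting into the averaged form of $F_{\epsilon}$ displayed in \eqref{eq:F-local}, this yields the homogeneous-medium image plus an additive correction proportional to $(1/\epsilon-1)\,c_{1}\,N^{-1}\sum_{n}\sigma_{n}^{2}$; bounding $\sigma_{n}^{2}\le\sigma^{2}$ identifies the correction as $O(\sigma^{2}/\epsilon)$. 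The bound transfers from $F_{\epsilon}$ to $1/F_{\epsilon}$ by a first-order delta-method step, since the homogeneous value of $F_{\epsilon}$ is bounded away from zero in the neighborhood of $\ypos_{0}$.

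For the variance, the same expansion gives $\mathrm{Var}[|\Phi_{n}|^{2}] = c_{1}^{2}\,\mathrm{Var}[\varDelta\tau_{n}^{2}]$, with $\mathrm{Var}[\varDelta\tau_{n}^{2}] = 4(\varDelta\tau_{n}^{0})^{2}\sigma_{n}^{2} + \mathrm{Var}[\nu_{n}^{2}]$ up to a skewness contribution $4\varDelta\tau_{n}^{0}\mathbb{E}[\nu_{n}^{3}]$ that is subdominant for zero-mean perturbations. Assuming the $\nu_{n}$ are at worst weakly correlated across $n$, the variance of the empirical average in $F_{\epsilon}$ is the sum of per-$n$ variances divided by $N^{2}$ and scaled by $(1/\epsilon-1)^{2}$. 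In the FWHM neighborhood identified in Theorem~\ref{thm:resolution} one has $|\varDelta\tau_{n}^{0}| = O(\sqrt{\epsilon}/B)$, so the first contribution is $O(\sigma^{2}/\epsilon)$ while the second is $O(\sigma^{4}/\epsilon^{2})$ and is absorbed by the hypothesis $\sigma^{2}/\epsilon\ll 1$; applying the delta method once more converts this into the claimed bound on $1/F_{\epsilon}$.

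The main obstacle will be to control the Taylor remainders uniformly over the neighborhood when $\sigma$ is not uniformly smaller than $\varDelta\tau_{n}^{0}$: the $O(\varDelta\tau_{n}^{4})$ terms dropped from $|\Phi_{n}|^{2}$ and the higher polynomial moments of $\nu_{n}$ they induce must be shown to stay below $O(\sigma^{2}/\epsilon)$, which requires mild regularity such as bounded fourth moments of $\boldsymbol{\nu}$. A secondary subtlety is the correlation structure of $\boldsymbol{\nu}$: the argument is cleanest when the $\nu_{n}$ are independent across flight-path positions, but should extend to weakly correlated models provided the cross-covariances $\mathrm{Cov}(\varDelta\tau_{n}^{2},\varDelta\tau_{n'}^{2})$ sum to the right order in $N$.
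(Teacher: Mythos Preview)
Your proposal is correct and reaches the same conclusion, but the route differs from the paper's in one structural way. The paper first rescales the deterministic travel-time differences via $\varDelta\tau_{n}^{0}=\sqrt{\epsilon}\,\varDelta T_{n}$, so that the random perturbation enters $1/F_{\epsilon}$ only through the combination $\delta=\sigma/\sqrt{\epsilon}$; it then Taylor-expands the \emph{reciprocal} $1/F_{\epsilon}$ directly in powers of $\delta$ and integrates against the joint density of $\boldsymbol{\nu}$. The $O(\delta)$ term vanishes by $\mathbb{E}[\nu_{n}]=0$ alone, and everything at order $\delta^{2}$ is simply declared $O(\delta^{2})$ without ever invoking independence or summing per-$n$ variances. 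Your plan instead computes moments of $F_{\epsilon}$ (linear in $\varDelta\tau_{n}^{2}$) and then pushes through $x\mapsto 1/x$ by the delta method, which is a two-step argument. Both are legitimate; the paper's direct expansion is shorter and sidesteps any assumption on the correlation structure of $\boldsymbol{\nu}$, whereas your decomposition is more modular and makes explicit the fourth-moment and weak-correlation hypotheses that the paper's $O(\delta^{2})$ sweeps under the rug. Your invocation of the FWHM scaling $|\varDelta\tau_{n}^{0}|=O(\sqrt{\epsilon}/B)$ plays the same role as the paper's stretched variables, just introduced later rather than at the outset.
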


\begin{proof}
  Since we consider a neighborhood about $\ypos = \ypos_{0}$, we start
  with \eqref{eq:F-local} and write
  \[
    F_{\epsilon} \sim
    \frac{1}{|\rho_{0}|} \left[ 1 + \alpha \left( \frac{1}{\epsilon} - 1
      \right) \left( \frac{1}{N} \sum_{n = 1}^{N} \varDelta
        \tau_{n}^{2} \right) \right],
  \]
  with $\alpha = \pi^{2} B^{2} (M+1)/(3 (M-1))$. Substituting
  \eqref{eq:travel-time-model} yields
  \[
    F_{\epsilon} \sim \frac{1}{|\rho_{0}|} \left[ 1 + \alpha \left(
        \frac{1}{\epsilon} - 1 \right) \left( \frac{1}{N} \sum_{n =
          1}^{N} \left( \varDelta \tau_{n}^{0} + \nu_{n} \right)^{2}
      \right) \right].
  \]
  Based on our resolution estimates, we introduce the stretched
  variables $\varDelta \tau_{n} = \sqrt{\epsilon} \varDelta T_{n}$ for
  $n = 1, \cdots, N$, and obtain
  \[
    F_{\epsilon} \sim \frac{1}{|\rho_{0}|} \left[ 1 + \alpha \left(
        1 - \epsilon \right) \left( \frac{1}{N} \sum_{n =
          1}^{N} \left( \varDelta T_{n} + \nu_{n}/\sqrt{\epsilon}
        \right)^{2} \right) \right].
  \]
  Let $f_{Y}(y_{1}, \cdots, y_{N})$ denote the probability density
  function for $(\nu_{1}, \cdots, \nu_{N})$. The expected value of the
  image is then
  \[
    \mathbb{E}\left[ \frac{1}{F_{\epsilon}} \right] \sim |\rho_{0}|
    \int \cdots \int \frac{f_{Y}(y_{1}, \cdots, y_{N})}{1 + \alpha
      \left( 1 - \epsilon \right) \left( \displaystyle \frac{1}{N}
        \sum_{n = 1}^{N} \left( \varDelta T_{n} +
          y_{n}/\sqrt{\epsilon} \right)^{2} \right)} \mathrm{d}y_{1}
    \cdots \mathrm{d}y_{N}.
  \]
  Substituting $y_{n} = \sigma \eta_{n}$ yields
  \[
    \mathbb{E}\left[ \frac{1}{F_{\epsilon}} \right] \sim |\rho_{0}|
    \int \cdots \int \frac{\sigma^{N} f_{Y}(\sigma \eta_{1}, \cdots,
      \sigma \eta_{N})}{1 + \alpha \left( 1 - \epsilon \right) \left(
        \displaystyle \frac{1}{N} \sum_{n = 1}^{N} \left( \varDelta
          T_{n} + \sigma \eta_{n}/\sqrt{\epsilon} \right)^{2}
      \right)} \mathrm{d}\eta_{1} \cdots \mathrm{d}\eta_{N}.
  \]
  Assuming that $\delta = \sigma/\sqrt{\epsilon} \ll 1$, we expand
  about $\delta = 0$ and find
  \[
    \left[ 1 + \alpha \left( 1 - \epsilon \right) \left( \displaystyle
        \frac{1}{N} \sum_{n = 1}^{N} \left( \varDelta T_{n} + \delta
          \eta_{n} \right)^{2} \right) \right]^{-1} = I_{0}
    - 2 \delta \alpha (1 - \epsilon) I_{0}^{2} \left( \frac{1}{N}
      \sum_{n = 1}^{N} \varDelta T_{n} \eta_{n} \right) + O(\delta^{2}),
  \]
  with
  \[
    I_{0} = \left[ 1 +
      \alpha (1 - \epsilon ) \frac{1}{N} \sum_{n = 1}^{N} (\varDelta
      T_{n})^{2} \right]^{-1},
  \]
  denoting the normalized image formed in the homogeneous
  medium. Substituting this expansion into the integral above for the
  expected value of the image and using $\mathbb{E}[\nu_{n}] = 0$ for
  $n = 1, \cdots, N$, we find that
  \[
    \mathbb{E}\left[ \frac{1}{F_{\epsilon}} \right] = |\rho_{0}| I_{0}
    + O(\delta^{2}).
  \]
  Next, by using the expansion
  \[
    \left[ 1 + \alpha \left( 1 - \epsilon \right) \left( \displaystyle
        \frac{1}{N} \sum_{n = 1}^{N} \left( \varDelta T_{n} + \delta
          \eta_{n} \right)^{2} \right) \right]^{-2} = I_{0}^{2}
    - \delta \alpha (1 - \epsilon) I_{0}^{3} \left( 
      \sum_{n = 1}^{N} \varDelta T_{n} \eta_{n} \right)
    + O(\delta^{2}),
  \]
  we determine that
  \[
    \mathbb{E}\left[ \left( \frac{1}{F_{\epsilon}} \right)^{2} \right]
    = |\rho_{0}|^{2} I_{0}^{2} + O(\delta^{2}).
  \]
  Therefore,
  \[
    \text{Var}\left[ \frac{1}{F_{\epsilon}} \right] = \mathbb{E}\left[
      \left( \frac{1}{F_{\epsilon}} \right)^{2} \right] - \left(
      \mathbb{E}\left[  \frac{1}{F_{\epsilon}} \right] \right)^{2} =
    O(\delta^{2}).
  \]
\end{proof}
Theorem \ref{thm:random} states that when
$\sigma/\sqrt{\epsilon} \ll 1$, the leading behavior of the
expectation of the image with random perturbations to the travel time
is exactly the same as the image in the homogeneous medium. The
recovery of the magnitude of the reflectivity $|\rho_{0}|$, and the
resolution estimates of Theorem \ref{thm:resolution} are different by
a term that is $O(\sigma^{2}/\epsilon)$. Because the variance of the
image is $O(\sigma^{2}/\epsilon)$, we determine that this image formed
is statistically stable.

An immediate consequence of Theorem \ref{thm:random} is given
in the following corollary.
\begin{corollary}[Resolution with travel time
  uncertainty]\label{thm:sigma-epsilon}
  When $\sigma^{2}$ is known or can be reliably estimated, one can set
  the value of $\epsilon$ so that $\sigma^{2} \ll \epsilon$ and
  Theorem \ref{thm:random} will hold. 
\end{corollary}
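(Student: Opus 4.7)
The plan is to recognize that this corollary is essentially a direct restatement of the hypothesis of Theorem \ref{thm:random}. That theorem's operative assumption is $\sigma^{2}/\epsilon \ll 1$, which is identical to $\sigma^{2} \ll \epsilon$. So the argument reduces to observing that, once $\sigma^{2}$ is known or reliably estimated, the user has full freedom in selecting $\epsilon$ in the admissible window $\sigma^{2} \ll \epsilon < 1$, after which Theorem \ref{thm:random} applies verbatim, yielding both the bounded bias and the variance estimate of order $\sigma^{2}/\epsilon$.

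The steps would be: first, restate the inequality $\sigma^{2}/\epsilon \ll 1$ appearing in Theorem \ref{thm:random} in its equivalent form $\sigma^{2} \ll \epsilon$, and remind the reader that $\epsilon$ is a user-defined regularization parameter entering the pseudo-inverse in \eqref{eq:sigma-plus}, so treating it as freely tunable is well-defined. Second, exhibit a concrete admissible choice to certify that the window $\sigma^{2} \ll \epsilon < 1$ is nonempty whenever $\sigma^{2}$ itself is small — for instance $\epsilon = \sigma^{2\gamma}$ for any fixed $\gamma \in (0,1)$, for which $\sigma^{2}/\epsilon = \sigma^{2(1-\gamma)} \to 0$ as $\sigma \to 0$ and $\epsilon < 1$ holds automatically for $\sigma < 1$. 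Third, invoke Theorem \ref{thm:random} to conclude statistical stability and correct recovery of the leading homogeneous-medium image.

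For completeness I would connect this back to Theorem \ref{thm:resolution} to extract the practically meaningful consequence: because the cross-range and range resolutions scale as $\sqrt{\epsilon}(c/B)(L/a)$ and $\sqrt{\epsilon}(c/B)(L/R)$ respectively, one wants $\epsilon$ chosen as small as possible subject to the stability constraint $\sigma^{2} \ll \epsilon$; the concrete choice $\epsilon = \sigma^{2\gamma}$ then degrades both resolution estimates by a factor of $\sigma^{\gamma}$ relative to the noise-free case. There is no real mathematical obstacle here — the corollary is a one-line logical consequence of Theorem \ref{thm:random} — and the only mild subtlety is that the corollary asserts the existence of a valid $\epsilon$ rather than prescribing a specific rate, so any quantitative prescription like $\epsilon = \sigma^{2\gamma}$ is better presented as an accompanying remark than folded into the proof itself.
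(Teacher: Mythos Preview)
Your proposal is correct and matches the paper's treatment: the paper states this corollary as ``an immediate consequence of Theorem~\ref{thm:random}'' with no separate proof, and your recognition that $\sigma^{2}/\epsilon \ll 1$ is equivalent to $\sigma^{2} \ll \epsilon$ together with the freedom to choose the user-defined parameter $\epsilon$ is exactly the point. Your additional material on concrete choices like $\epsilon = \sigma^{2\gamma}$ and the link back to the resolution estimates of Theorem~\ref{thm:resolution} goes beyond what the paper provides and would be appropriate as a remark rather than as part of the proof proper, just as you suggest.
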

Setting $\epsilon$ in this way connects the resolution of the image
with the variance of the random perturbations to the travel time.

\section{Numerical results}
\label{sec:results}

To validate the theoretical results from above, we use numerical
simulations to generate data for various scattering scenes. The
following values for the parameters are based on the GOTCHA data
set~\cite{casteel2007challenge}. In particular, we have set
$R = 3.55\, \text{km}$ and $H = 7.30\, \text{km}$, so that
$L = \sqrt{H^{2} + R^{2}} = 8.12\, \text{km}$.  The synthetic aperture
created by the linear flight path is $a = 0.13\, \text{km}$. The
central frequency is $f_{0} = 9.6\, \text{GHz}$ and the bandwidth is
$B = 622\, \text{MHz}$. Using $c = 3 \times 10^{8}\, \text{m/s}$, we
find that the central wavelength is $\lambda_{0} = 3.12\,
\text{cm}$. The imaging region is at the ground level $z = 0$. We use
$2M-1 = 39$ frequencies so that $M = 20$, and $N = 32$ spatial
measurements.

\begin{figure}[t]
  \centering
  \includegraphics[width=0.32\linewidth]{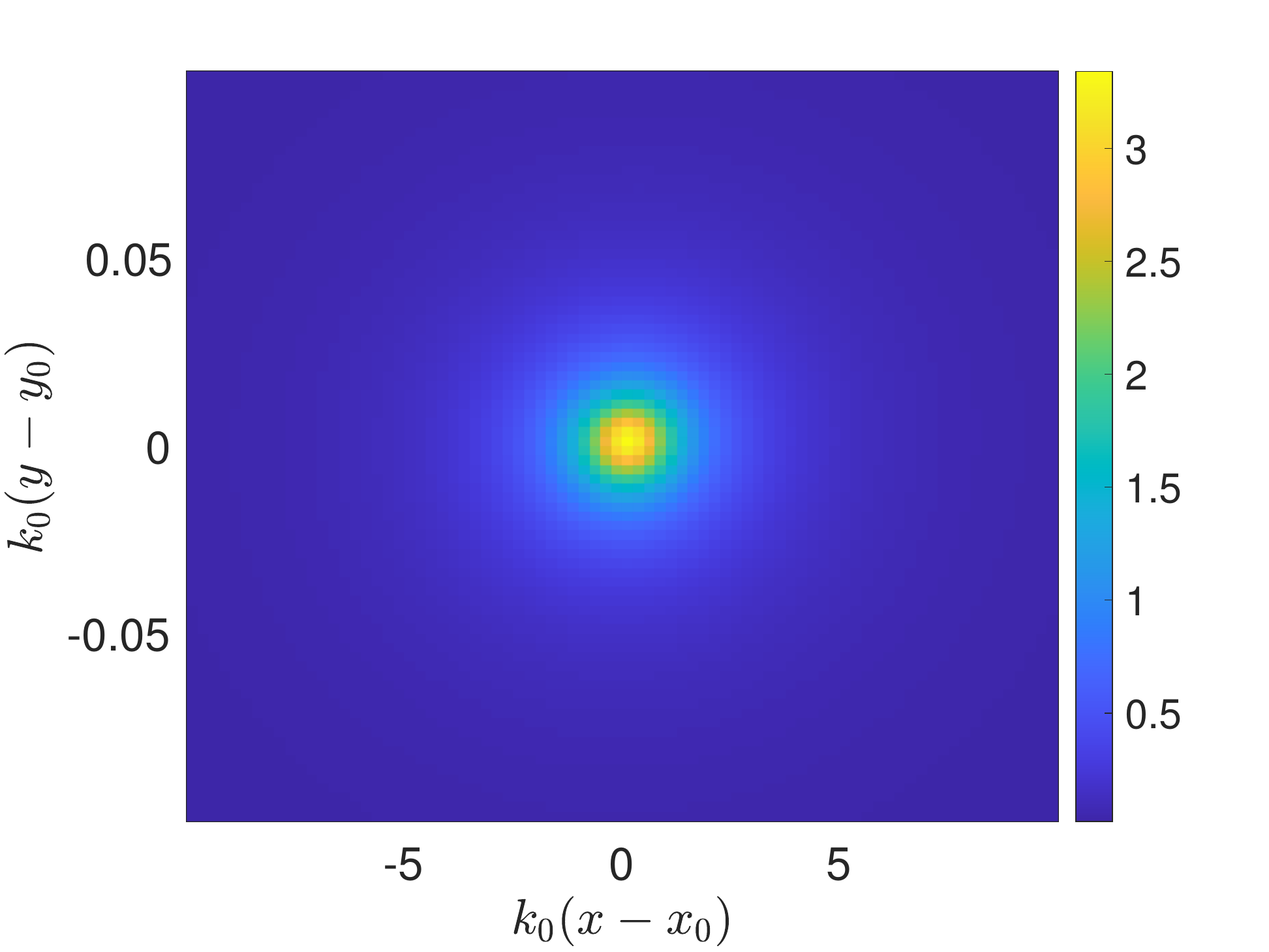}
  \includegraphics[width=0.32\linewidth]{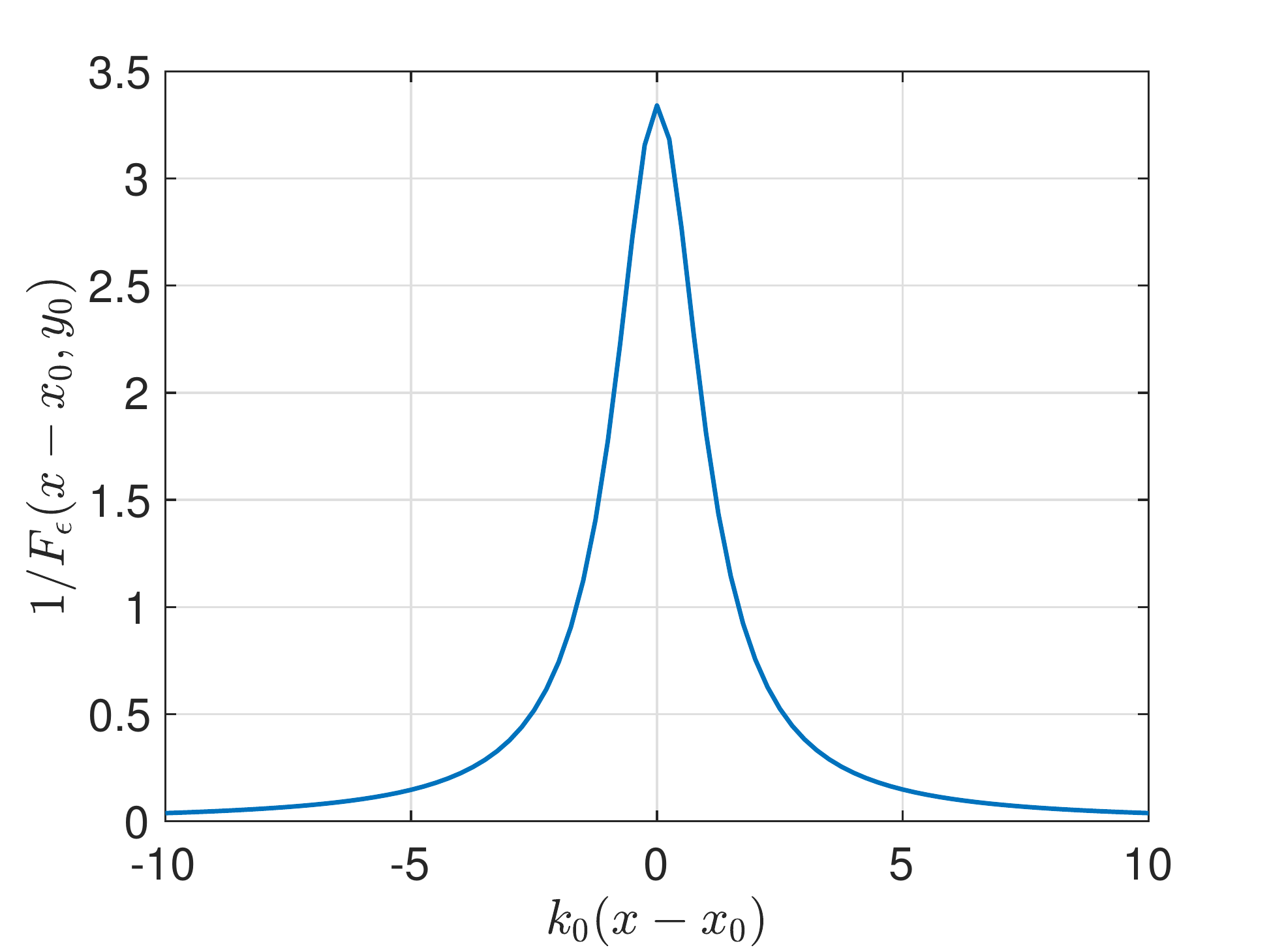}
  \includegraphics[width=0.32\linewidth]{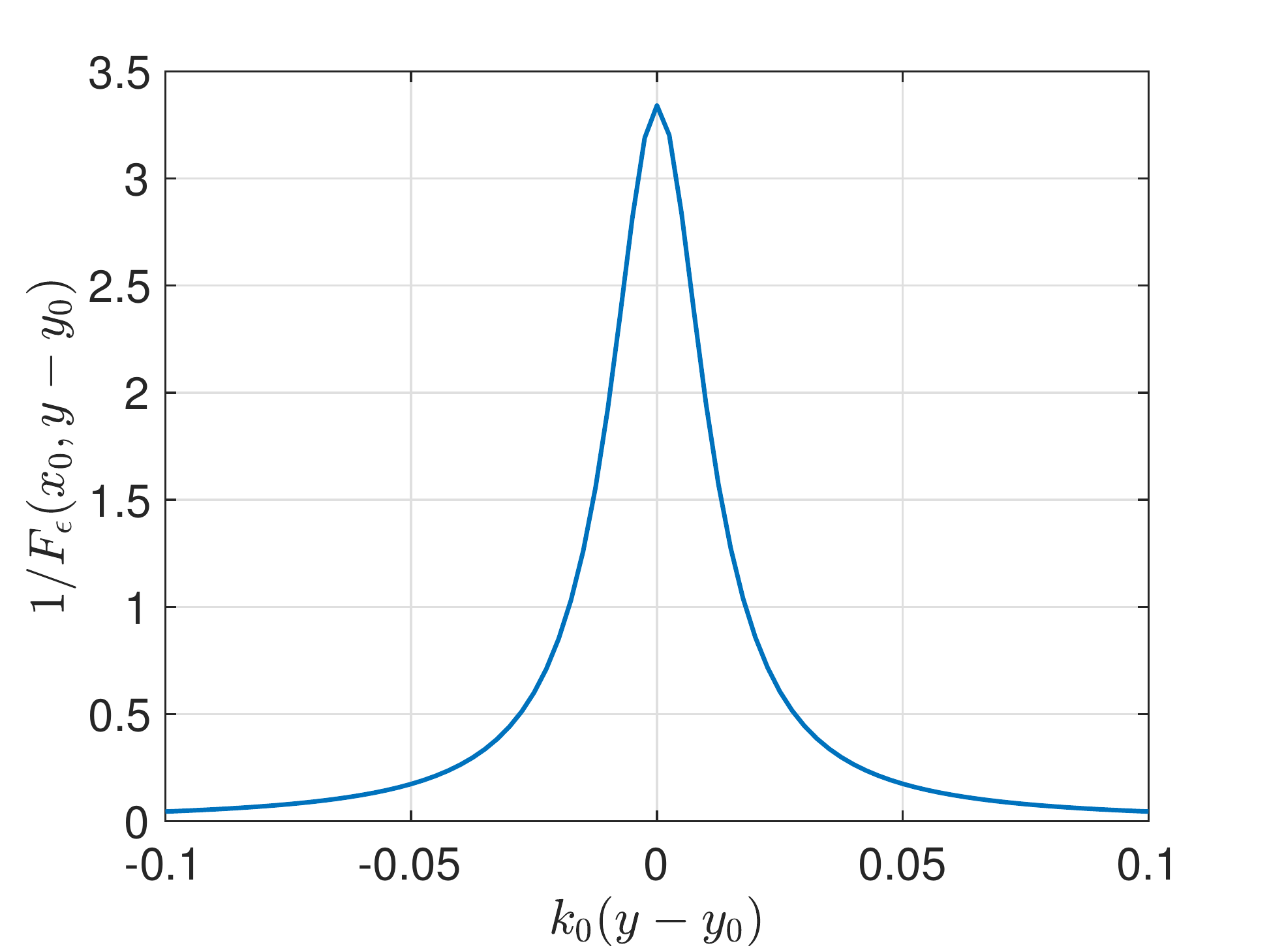}
  \caption{Image formed through evaluation of $1/F_{\epsilon}(\ypos)$
    with $F_{\epsilon}(\ypos)$ given in \eqref{eq:F-function} for a
    point target located at $x_{0} = y_{0} = 1\, \text{m}$ on the
    planar imaging region. Measurement noise was added so that
    $\text{SNR} = 44.1339 \text{dB}$. For this image,
    $\epsilon = 10^{-8}$.  }
  \label{fig:point-target}
\end{figure}

\subsection{Single point target}

We first consider imaging a single point target located at
$x_{0} = y_{0} = 1\, \text{m}$ with complex reflectivity
$\rho_{0} = 3.4 \mathrm{i}$ on the planar imaging region. Figure
\ref{fig:point-target} shows the image formed through evaluation of
$1/F_{\epsilon}(\ypos)$ with $F_{\epsilon}(\ypos)$ given in
\eqref{eq:F-function} with $\epsilon = 10^{-10}$. Measurement noise
was added so that the signal-to-noise ratio (SNR) is
$\text{SNR} = 44.1339\, \text{dB}$. The left plot of
Fig.~\ref{fig:point-target} shows the color contour plot of the image
in a region about the target location. The center plot of
Fig.~\ref{fig:point-target} shows the image on $y = y_{0}$ as a
function of $x$ (cross-range), and the right plot shows the image on
$x = x_{0}$ as a function of $y$ (range).  These results shown in
Fig.~\ref{fig:point-target} show that the image attains its maximum
value of $3.4$ corresponding to $|\rho_{0}|$ at the correct target
location.  The image attains a high resolution due to choice of
$\epsilon$. Because $L/R = 2.29$ and $L/a = 62.46$, we expect from the
resolution estimates given in Theorem \ref{thm:resolution} that the
range resolution should be better than the cross-range
resolution. This difference in resolution can be observed by noting
the values of $k_{0} (x - x_{0})$ in the center plot compared to the
values of $k_{0} (y - y_{0})$ in the right plot of
Fig.~\ref{fig:point-target}.

\begin{figure}[htb]
  \centering
  \includegraphics[width=0.40\linewidth]{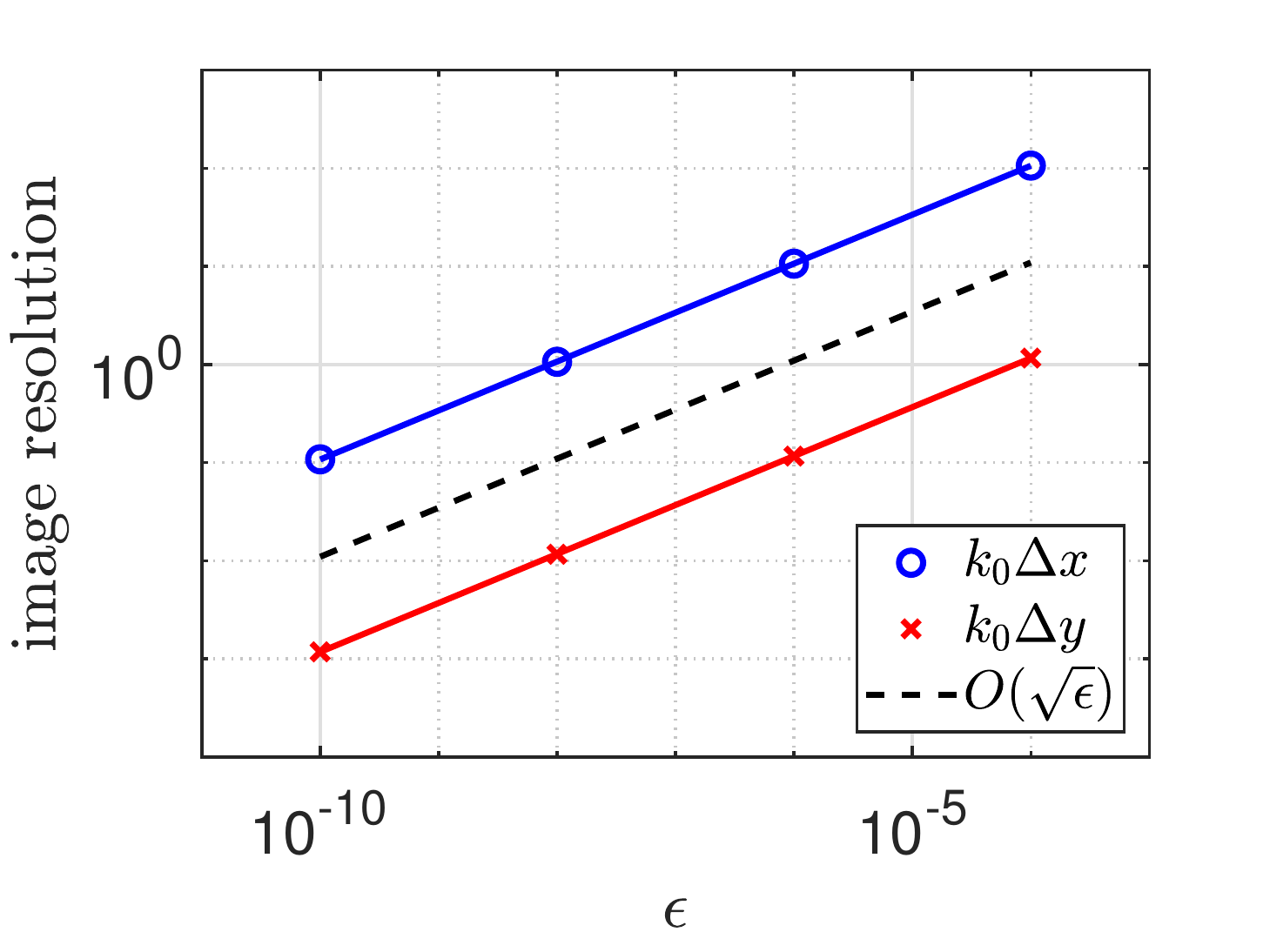}
  \includegraphics[width=0.40\linewidth]{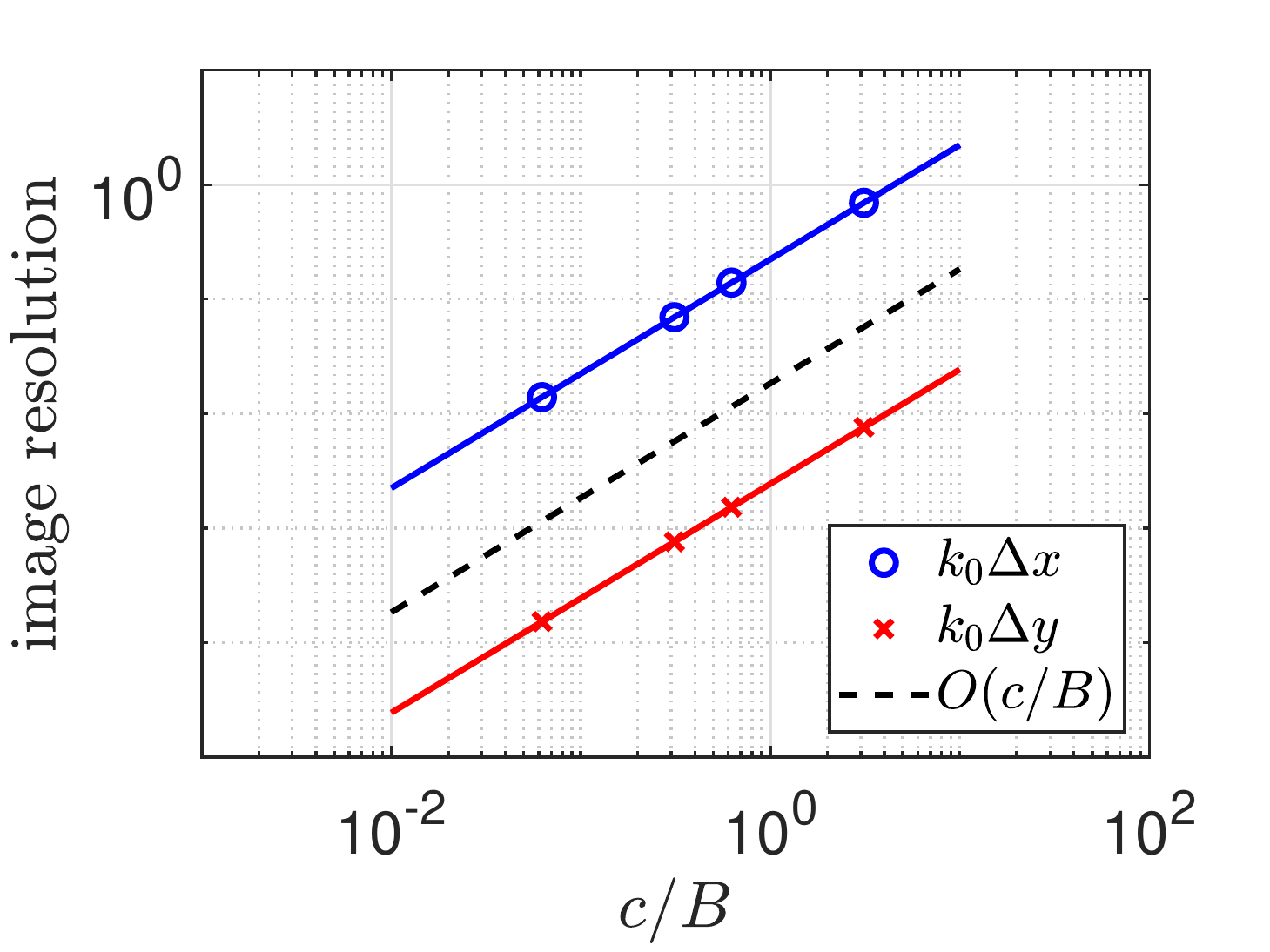}
  \caption{Numerically computed image resolutions with respect to
    $\epsilon$ (left) and $c/B$ (right). The range resolutions,
    $k_{0} \Delta x$, are plotted as ``$\circ$'' symbols, and the
    cross-range resolution, $k_{0} \Delta y$, are plotted as red
    ``$\times$'' symbols. The blue and red curves are the
    least-squares fit to lines through the range and cross-range
    resolution data, respectively.}
  \label{fig:epsilon+bandwidth}
\end{figure}

In Fig.~\ref{fig:epsilon+bandwidth} we show numerically computed FWHM
values of $k_{0} \Delta x$ (cross-range resolution) and
$k_{0} \Delta y$ (range resolution) for a single point target when
varying $\epsilon$ (left plot) and $c/B$ (right plot). The blue
``$\circ$'' symbols are the computed values of $k_{0} \Delta x$ and
the red ``$\times$'' symbols are the computed values of
$k_{0} \Delta y$ both found by numerically determining the FWHM. The
solid blue and red curves are the least-squares linear fit through the
$\Delta x$ and $\Delta y$ data, respectively. For the results shown in
the left plot of Fig.~\ref{fig:epsilon+bandwidth}, all parameters are
set to the same values used for Fig.~\ref{fig:point-target}, except
that $\text{SNR} = \infty$, so there is no noise. For the right plot of
Fig.~\ref{fig:epsilon+bandwidth}, we have varied the value of $B$, but
all other parameter values are the same as those used for
Fig.~\ref{fig:point-target}.  In these results, we find that
$k_{0} \Delta x > k_{0} \Delta y$ for all values of $\epsilon$ and
$c/B$ which is due to the fact that $L/R < L/a$.

The results for cross-range and range resolutions with respect to
$\epsilon$ given in the left plot of Fig.~\ref{fig:epsilon+bandwidth}
clearly show an $O(\sqrt{\epsilon})$ behavior which is plotted as a
dashed-black curve in the left plot of
Fig.~\ref{fig:epsilon+bandwidth}. The computed least-squares fits are
$\log(\Delta x) \approx 3.9593 + 0.4991 \log(\epsilon)$ and
$\log(\Delta y) \approx -0.5565 + 0.4992 \log(\epsilon)$ which
numerically validate this $O(\sqrt{\epsilon})$ behavior.

The results for cross-range and range resolutions with respect to
$c/B$ given in the right plot of Fig.~\ref{fig:epsilon+bandwidth}
clearly show an $O(c/B)$ behavior which is plotted as a dashed-black
curve. The least-squares fits are
$\log(\Delta x) \approx -6.8067 + 0.9997 \log(c/B)$ and
$\log(\Delta y) \approx -11.3247 + 0.9999 \log(c/B)$ which numerically
validate the $O(c/B)$ behavior.

\begin{figure}[htb]
  \centering
  \includegraphics[width=0.4\linewidth]{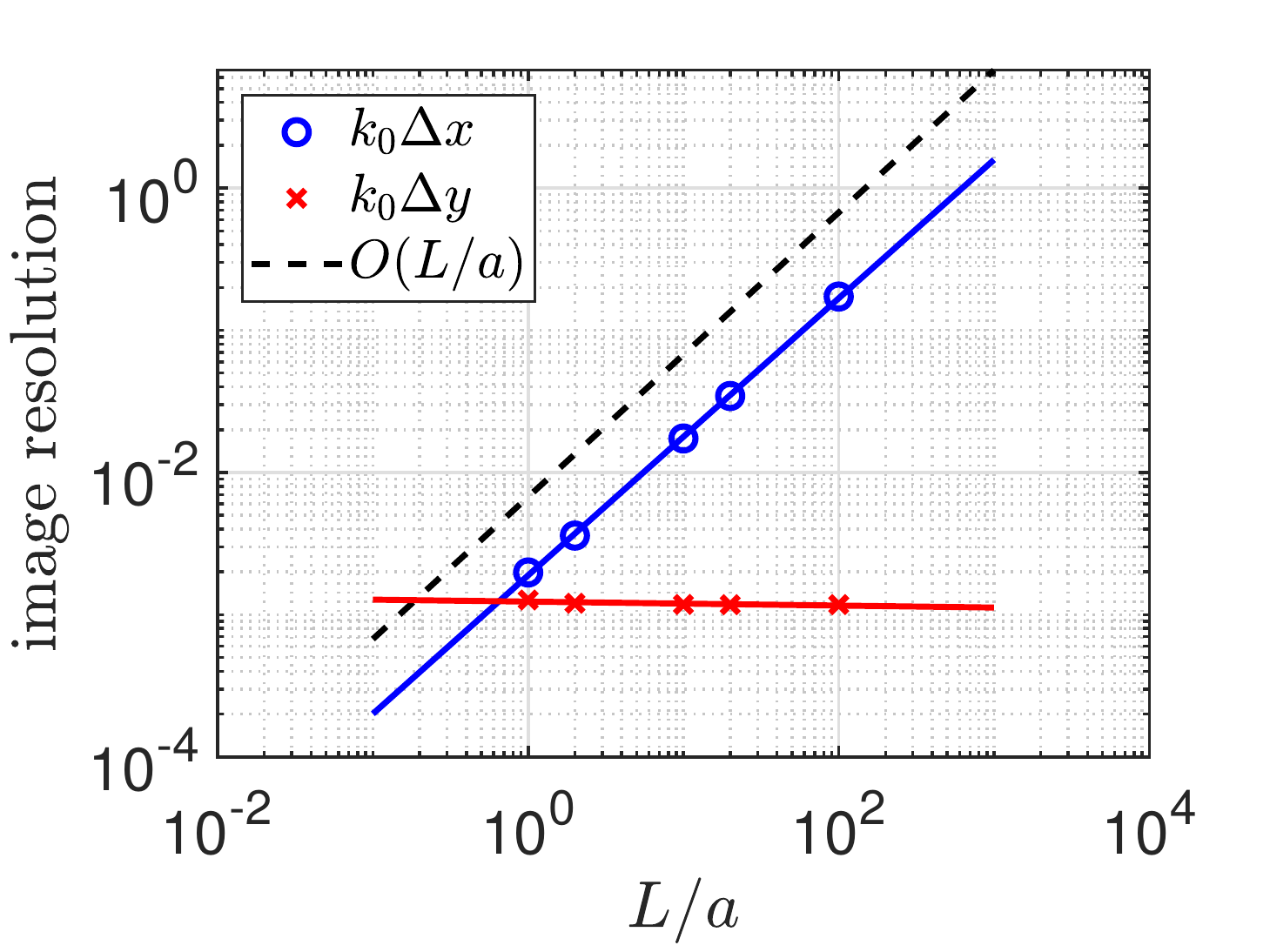}
  \includegraphics[width=0.4\linewidth]{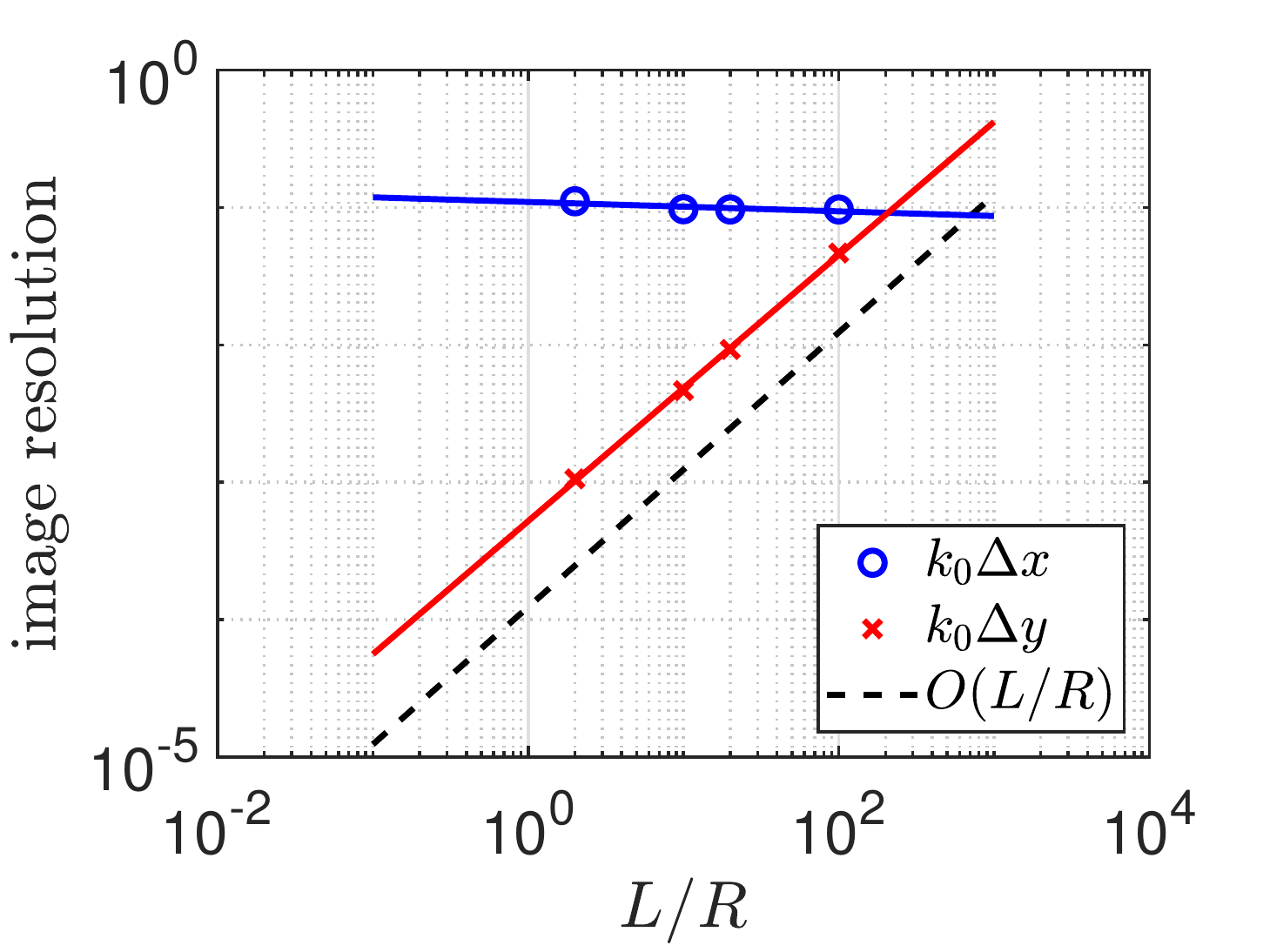}
  \caption{Numerical computed image resolutions with respect to $L/a$
    (left) and $L/R$ (right). The cross-range resolutions,
    $k_{0} \Delta x$, are plotted as ``$\circ$'' symbols, and the
    range resolutions, $k_{0} \Delta y$, are plotted as red
    ``$\times$'' symbols. The blue and red curves are the
    least-squares fit to lines through the cross-range and range
    resolution data, respectively.}
  \label{fig:aperture+radius}
\end{figure}

The behaviors of computed image resolution with respect to $L/a$ and
$L/R$ are shown in Fig.~\ref{fig:aperture+radius}. For these results,
all parameter values are the same as those used for
Fig.~\ref{fig:point-target} except that $\text{SNR} = \infty$, so
there is no noise and $a$ is varied in the left plot and $R$ is varied
in the right plot. The computed range and cross-range FWHM values,
$k_{0} \Delta x$ and $k_{0} \Delta y$, respectively, are plotted just
as in Fig.~\ref{fig:epsilon+bandwidth} including the corresponding
least-squares fit to lines.

The results for cross-range resolution with respect to $L/a$ shown in
the left plot of Fig.~\ref{fig:aperture+radius} clearly show an
$O(L/a)$ behavior, which is plotted as a dashed-black curve. The
computed least-squares fit to a line is
$\log(\Delta x) \approx -11.5748 + 0.9741 \log(L/a)$ which numerically
validates the $O(L/a)$ behavior. In contrast, the range resolution
does not vary significantly with $L/a$. The computed least-squares fit
to a line is $\log(\Delta y) \approx -12.0007 - 0.0139 \log(L/a)$ which
quantifies the weak dependence that range resolution has on aperture.

The results for range resolution with respect to $L/R$ shown in the
right plot of Fig.~\ref{fig:aperture+radius} clearly show an $O(L/R)$
behavior, which is plotted as a dashed-black curve. The computed
least-squares fit to a line is
$\log(\Delta y) \approx -12.8645 + 0.9690 \log(L/R)$ which numerically
validates this $O(L/R)$ behavior. In contrast, the cross-range
resolution shows a much weaker dependence on $L/R$. The computed
least-squares fit to a line is
$\log(\Delta x) \approx -7.5134 - 0.0340 \log(L/R)$ which quantifies
this weak dependence on $L/R$.

\begin{figure}[htb]
  \centering
  \includegraphics[width=0.42\linewidth]{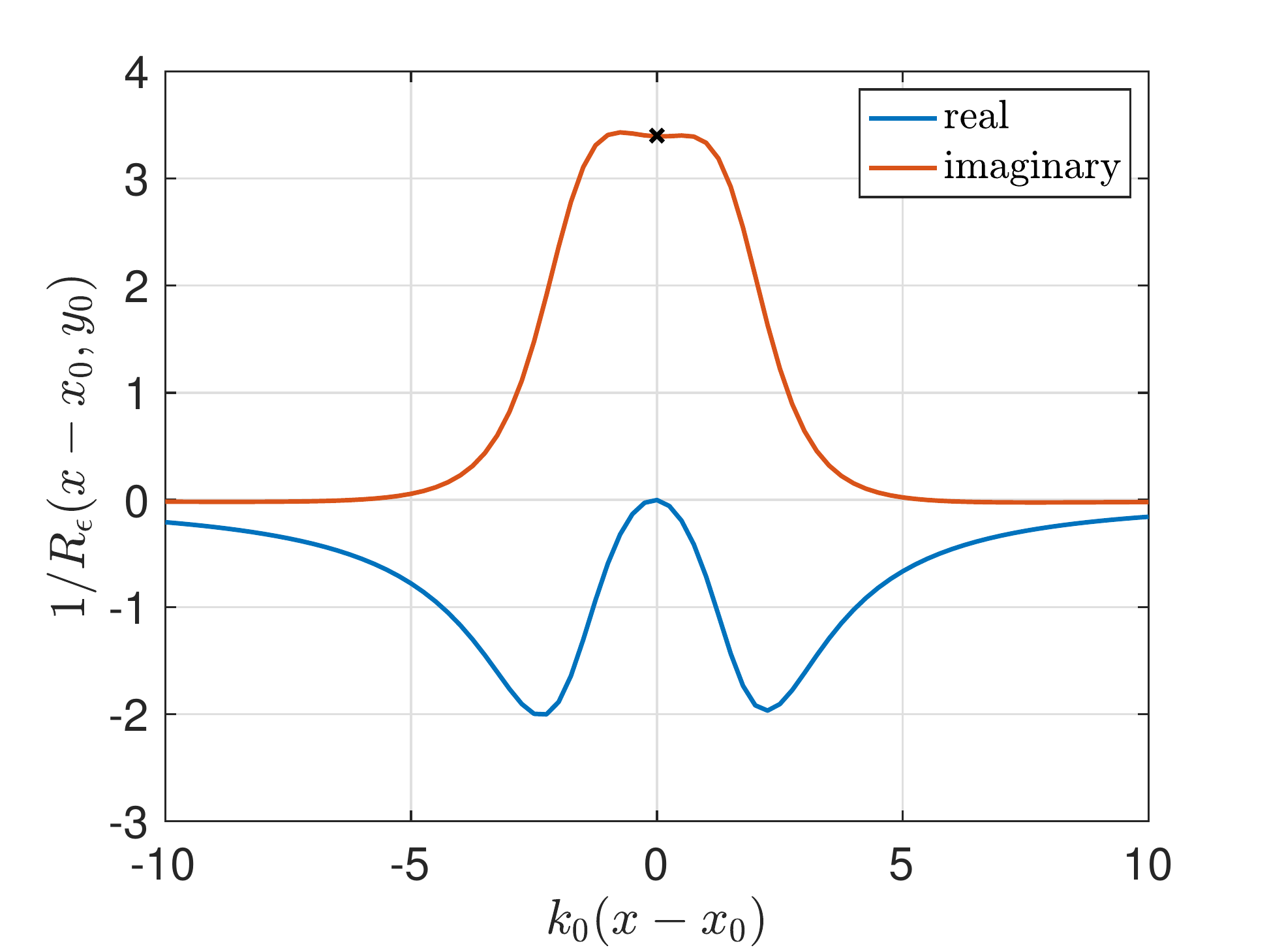}
  \includegraphics[width=0.42\linewidth]{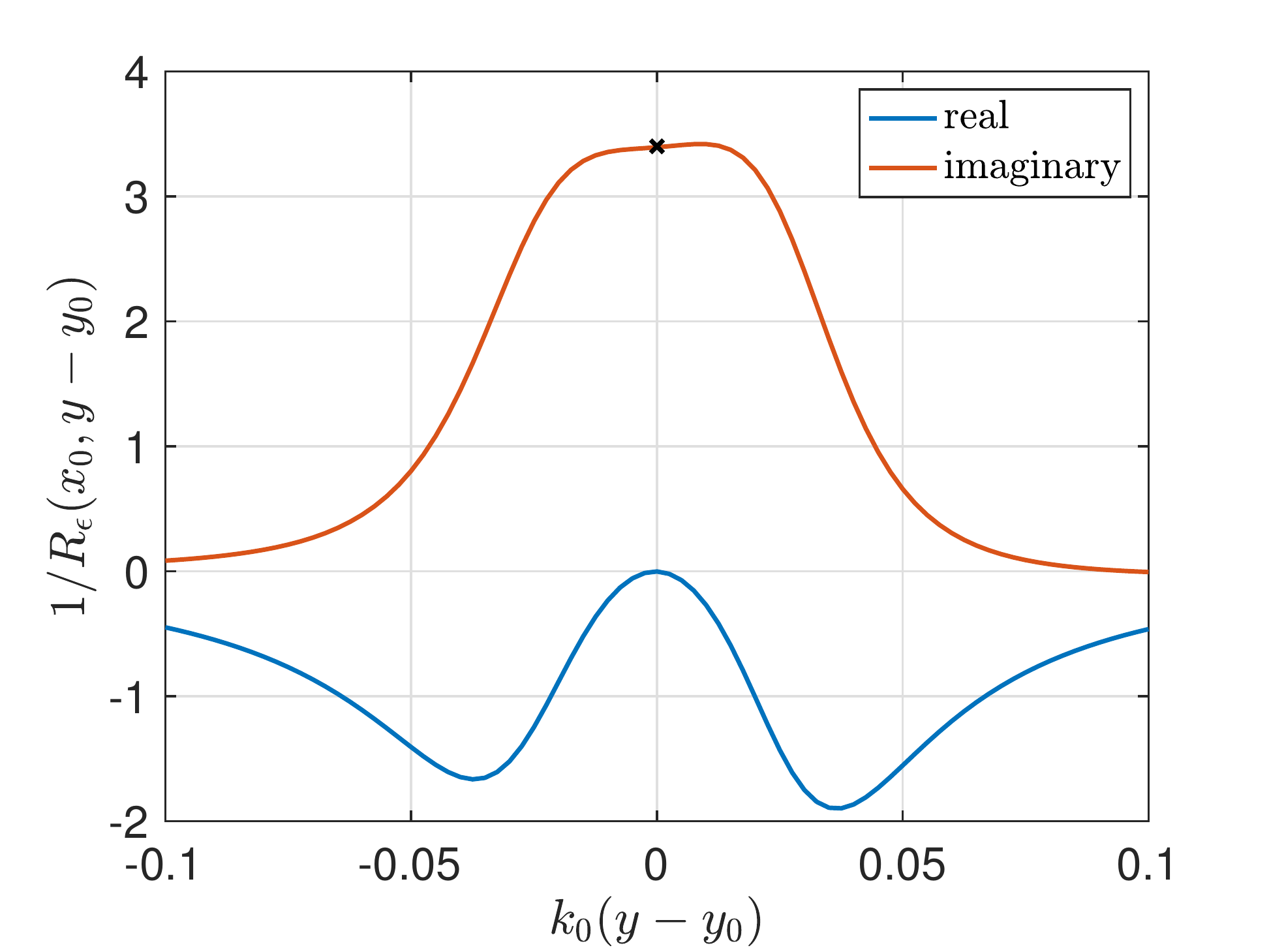}
  \caption{Recovering the complex reflectivity through evaluation of
    $1/R_{\epsilon}(\ypos)$ with $R_{\epsilon}(\ypos)$ given in
    \eqref{eq:R-function} with $\epsilon = 10^{-10}$. Measurement
    noise was added so that $\text{SNR} = 44.1339 \text{dB}$. The left
    plot shows $1/R_{\epsilon}$ on $y = y_{0}$ as a function of
    $x - x_{0}$ and the right plot shows $1/R_{\epsilon}$ on
    $x = x_{0}$ as a function of $y - y_{0}$. The blue curves
    give the real part of $1/R_{\epsilon}(\ypos)$ and the red curves
    give the imaginary part. The black ``$\times$'' symbol gives the
    exact value of the complex reflectivity,
    $\rho_{0} = 3.4 \mathrm{i}$.}
  \label{fig:complex-reflectivity}
\end{figure}

We now show results from evaluating $1/R_{\epsilon}(\ypos)$ with
$R_{\epsilon}(\ypos)$ given in \eqref{eq:R-function}. These results
use the same parameter values as those used in
Fig.~\ref{fig:point-target}. When plotting $1/R_{\epsilon}$, there is
no local behavior to indicate the location of the target. For this
reason these images do not provide useful information about the
location of targets.  However, when we evaluate
$1/R_{\epsilon}(\ypos)$ in a region near the target location, we are
able to recover the complex reflectivity. In
Fig.~\ref{fig:complex-reflectivity} we show the real and imaginary
parts of $1/R_{\epsilon}(x-x_{0},y_{0})$ in the left plot and of
$1/R_{\epsilon}(x_{0},y-y_{0})$ in the right plot.  In both plots the
actual value $\rho_{0} = 3.4 \mathrm{i}$ is plotted as a black
``$\times$'' symbol. These plots show that when the location of the
point target is known, evaluating $1/R_{\epsilon}(\ypos)$ at the
recovered target location provides a method for recovering the complex
reflectivity. At the target location, we find
$1/R_{\epsilon}(\ypos_{0}) = -1.3059 \times 10^{-3} + 3.3928
\mathrm{i}$ which demonstrates a very high accuracy in recovering the
complex reflectivity.  Provided that the target location is reasonably
accurate, the user-defined parameter $\epsilon$ can be used to
regularize these results to enable stable recovery of the complex
reflectivity.

\begin{figure}[htb]
  \centering
  \includegraphics[width=0.45\linewidth]{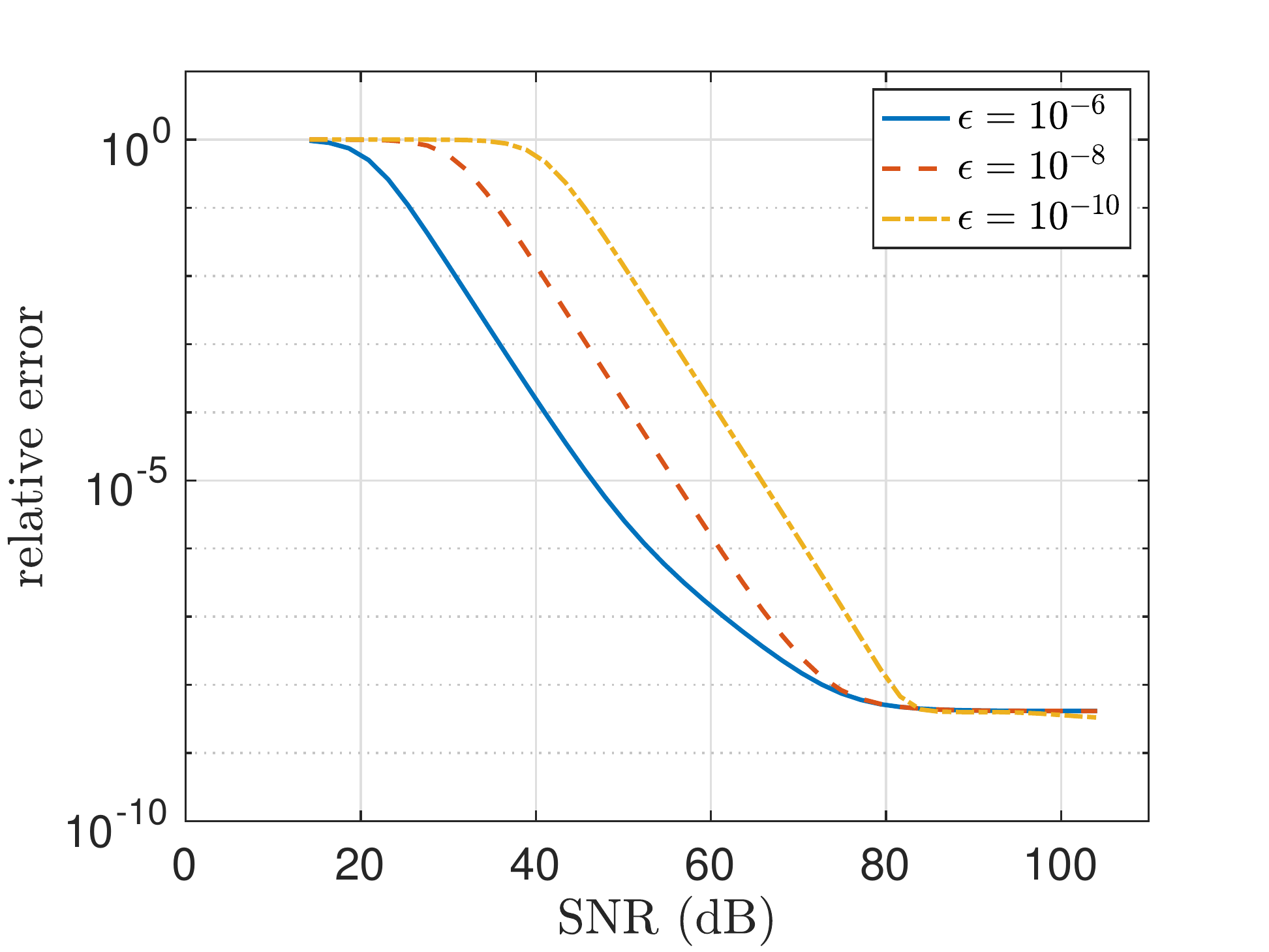}
  \caption{Relative error in the recovery of the complex reflectivity
    through evaluation of $1/R_{\epsilon}(\ypos_{0})$ with
    $\epsilon = 10^{-8}$ for a point target as a function of SNR
    (dB). All parameters are the same as those used in
    Fig.~\ref{fig:point-target}.}
  \label{fig:error}
\end{figure}

In both Theorems \ref{thm:resolution} and \ref{thm:reflectivity}, it
is assumed that the SNR is sufficiently high that one can separate the
signal subspace from the noise subspace. To investigate the effect of
SNR on the recovery of the complex reflectivity, we evaluate
$1/R_{\epsilon}(\ypos_{0})$ for different SNR values and compute the
relative error,
$E_{\text{rel}} = | \rho_{0} -
1/R_{\epsilon}(\ypos_{0})|/|\rho_{0}|$. These relative error results
are shown with $\epsilon = 10^{-6}$ as a solid blue curve,
$\epsilon = 10^{-8}$ as a dashed red curve, and $\epsilon = 10^{-10}$
as a dot-dashed yellow curve in Fig.~\ref{fig:error}. The results in
Fig.~\ref{fig:error} show that sufficiently high SNR is needed to
achieve a high accuracy. Additionally, we observe that larger
$\epsilon$ values achieve higher accuracy for any fixed SNR. This
higher accuracy occurs because $\epsilon$ regularizes
$1/R_{\epsilon}(\ypos)$ thereby stabilizing the recovery of the
complex reflectivity.  The role of SNR on the resolution becomes more
of an issue when imaging multiple targets which we discuss below.

\subsection{Multiple point targets}

We now consider multiple point targets in the imaging region. We set
the origin of the coordinate system to lie at the center of a
$5\, \text{m} \times 5\, \text{m}$ planar imaging region on $z =
0$. The first target is located at
$(x_{1}, y_{1},0) = (0.01\, \text{m}, 0.1\, \text{m},0)$ with complex
reflectivity $\rho_{1} = 3.4 \mathrm{i}$. The second target is located
at $(x_{2}, y_{2}, 0) = (-0.30\, \text{m}, -0.50\, \text{m},0)$ with
complex reflectivity $\rho_{2} = 4.2 \mathrm{i}$. The third target is
located at $(x_{3}, y_{3}, 0) = (-0.50\, \text{m}, 0.50\, \text{m},0)$
with complex reflectivity $\rho_{3} = 3.1 \mathrm{i}$.

\begin{figure}[t]
  \centering
  \begin{subfigure}[t]{0.32\linewidth}
    \includegraphics[width=\linewidth]{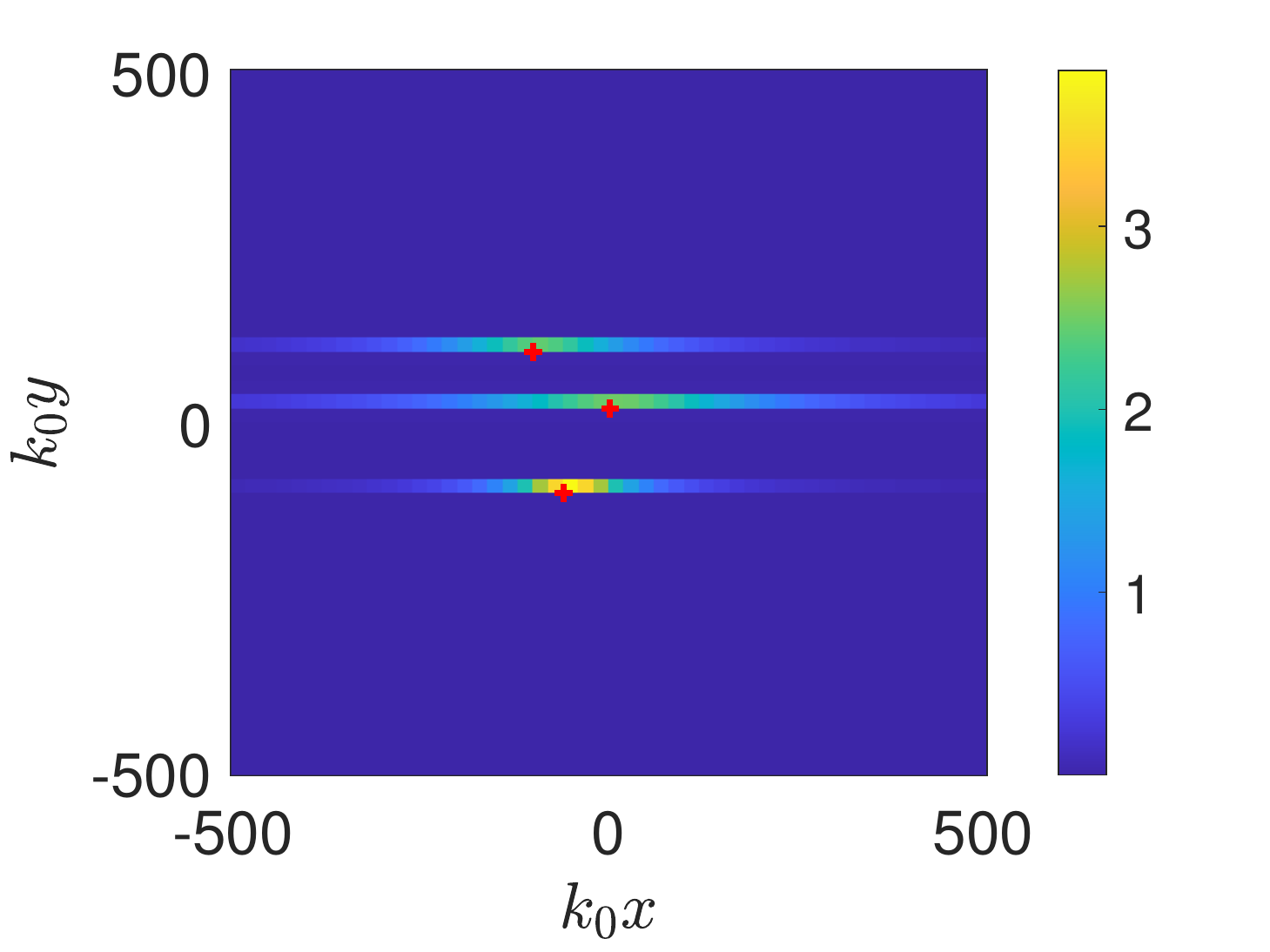}
    \caption{$\epsilon = 10^{-6}$}
  \end{subfigure}
  \begin{subfigure}[t]{0.32\linewidth}
    \includegraphics[width=\linewidth]{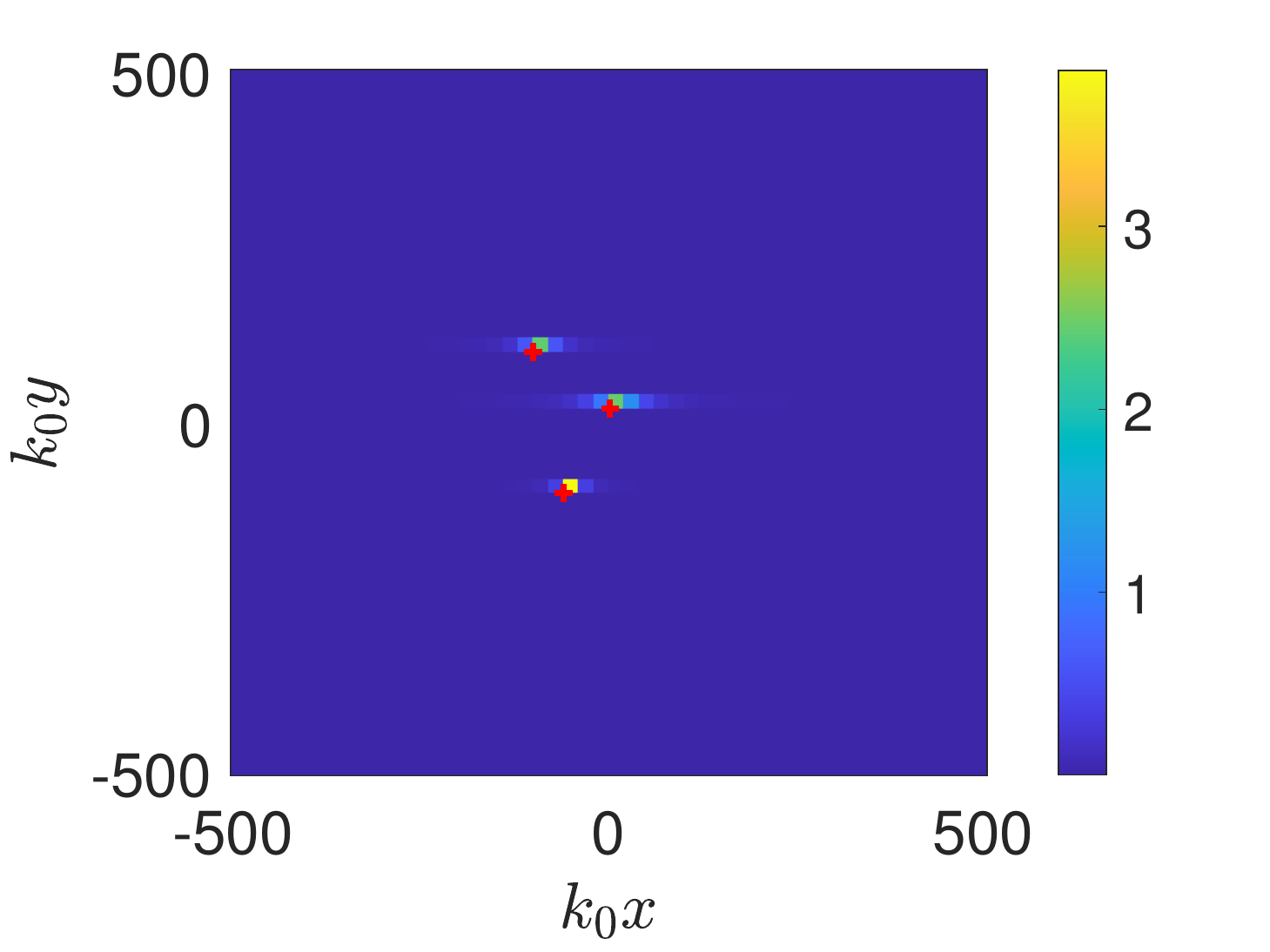}
    \caption{$\epsilon = 10^{-8}$}
  \end{subfigure}
  \begin{subfigure}[t]{0.32\linewidth}
    \includegraphics[width=\linewidth]{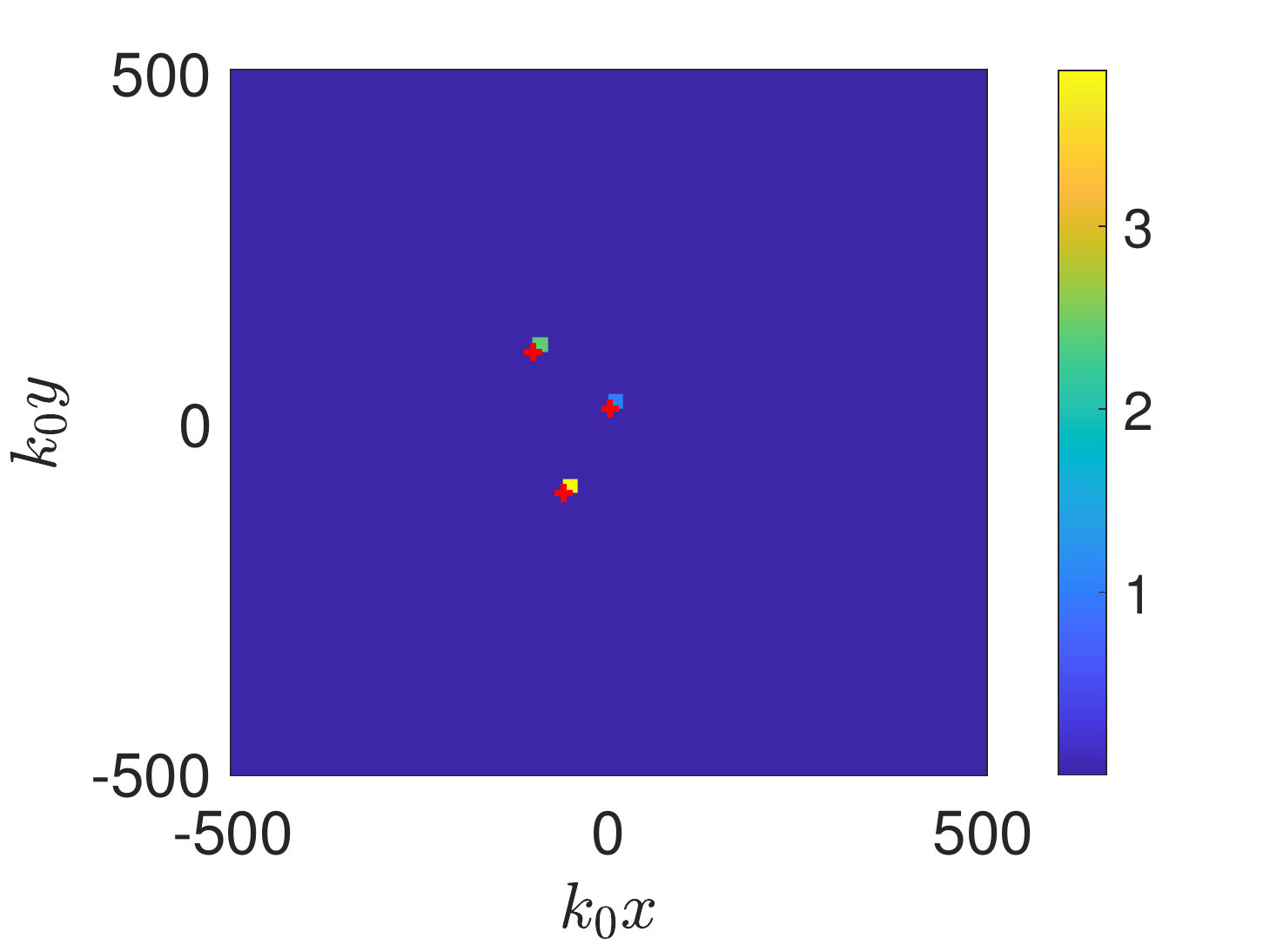}
    \caption{$\epsilon = 10^{-10}$}
  \end{subfigure}
  \caption{Imaging through evaluation of $1/F_{\epsilon}(\ypos)$ with
    $F_{\epsilon}(\ypos)$ given in \eqref{eq:F-function} and
    $\epsilon = 10^{-6}$ (left), $\epsilon = 10^{-8}$ (center), and
    $\epsilon = 10^{-10}$ (right) for 3 point targets with
    $\text{SNR} = 64.1695\, \text{dB}$. The first target is located at
    $(x_{1}, y_{1},0) = (0.01\, \text{m}, 0.1\, \text{m},0)$ with
    complex reflectivity $\rho_{1} = 3.4 \mathrm{i}$. The second
    target is located at
    $(x_{2}, y_{2}, 0) = (-0.3\, \text{m}, -0.5\, \text{m},0)$ with
    complex reflectivity $\rho_{2} = 4.2 \mathrm{i}$. The third target
    is located at
    $(x_{3}, y_{3}, 0) = (-0.5\, \text{m}, 0.5\, \text{m},0)$ with
    complex reflectivity $\rho_{3} = 3.1 \mathrm{i}$.}
  \label{fig:3targets}
\end{figure}

In Fig.~\ref{fig:3targets} we show the image produced through
evaluation of $1/F_{\epsilon}(\ypos)$ with $F_{\epsilon}(\ypos)$ given
in \eqref{eq:F-function} with $\epsilon = 10^{-6}$ (left),
$\epsilon = 10^{-8}$ (center), and $\epsilon = 10^{-10}$ (right). The
imaging region is discretized using a $51 \times 51$ equi-spaced mesh
corresponding to approximately a $10\, \text{cm}$
meshwidth. Measurement noise was added so that
$\text{SNR} = 64.1695\, \text{dB}$. We see that the value of
$\epsilon$ affects the overall resolution of the three targets,
especially with respect to cross-range since $L/a < L/R$.  With
$\epsilon = 10^{-10}$, the image produced through evaluation of
$1/F_{\epsilon}(\ypos)$ clearly indicates the locations of the three
targets. Even though we do not have direct interpretation of the
magnitude of the peaks in this plot, we do find that
$\| 1/F_{\epsilon}(\ypos) \|_{\infty} = 3.8641$ for
$\epsilon = 10^{-10}$, which is close to the values of $|\rho_{1}|$,
$|\rho_{2}|$, and $|\rho_{3}|$.

\begin{figure}[t]
  \centering
  \includegraphics[width=0.32\linewidth]{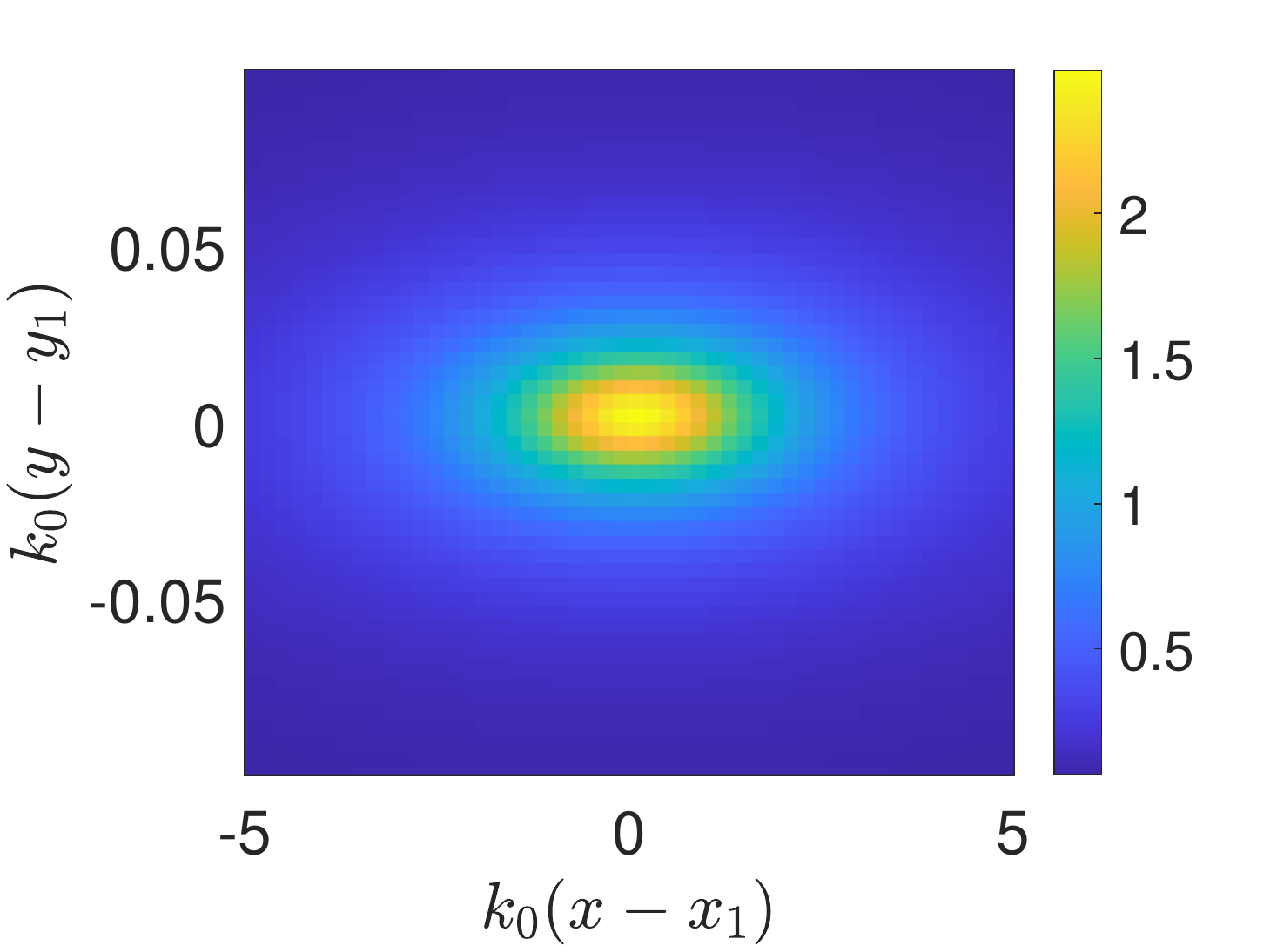}
  \includegraphics[width=0.32\linewidth]{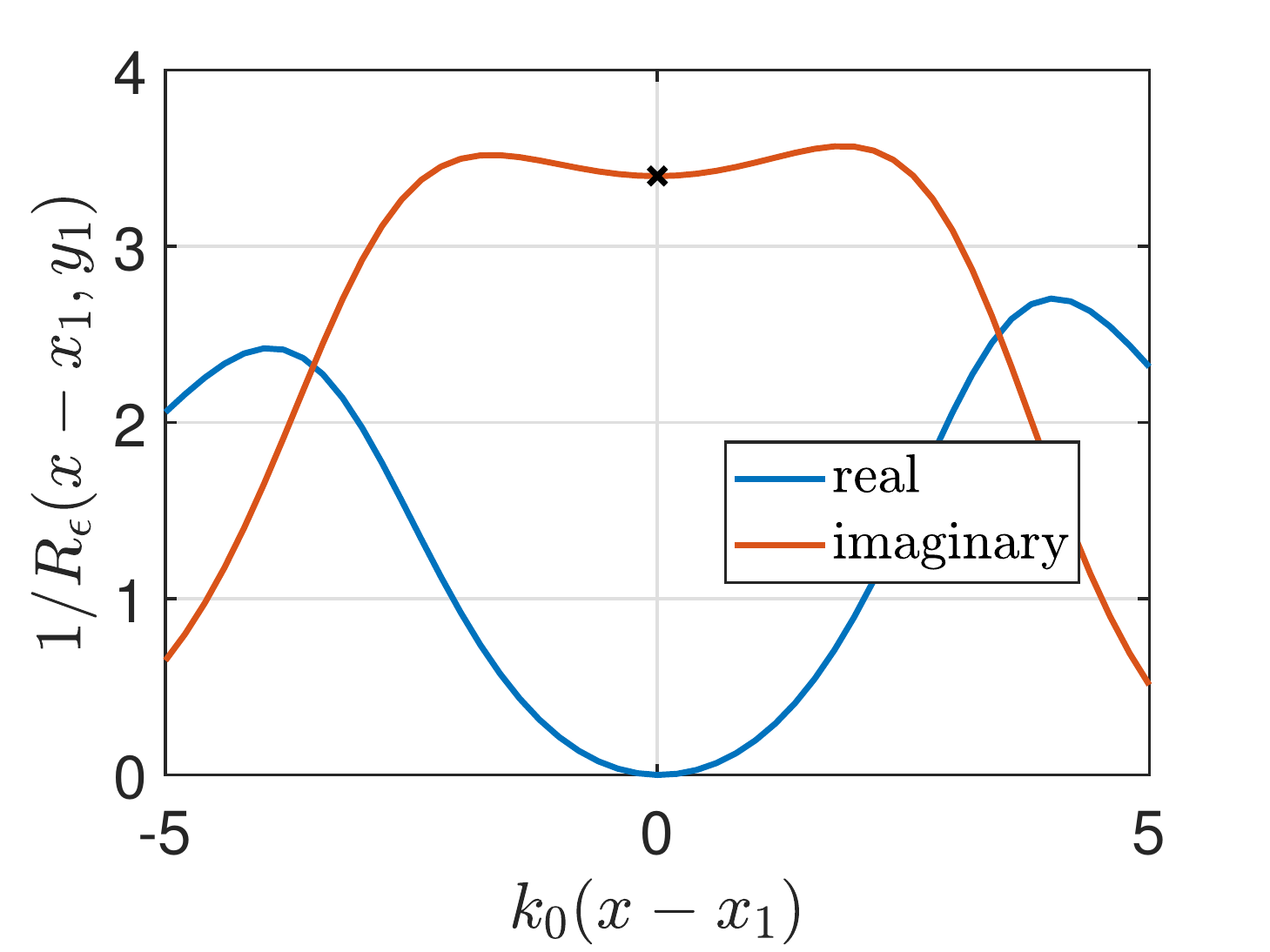}
  \includegraphics[width=0.32\linewidth]{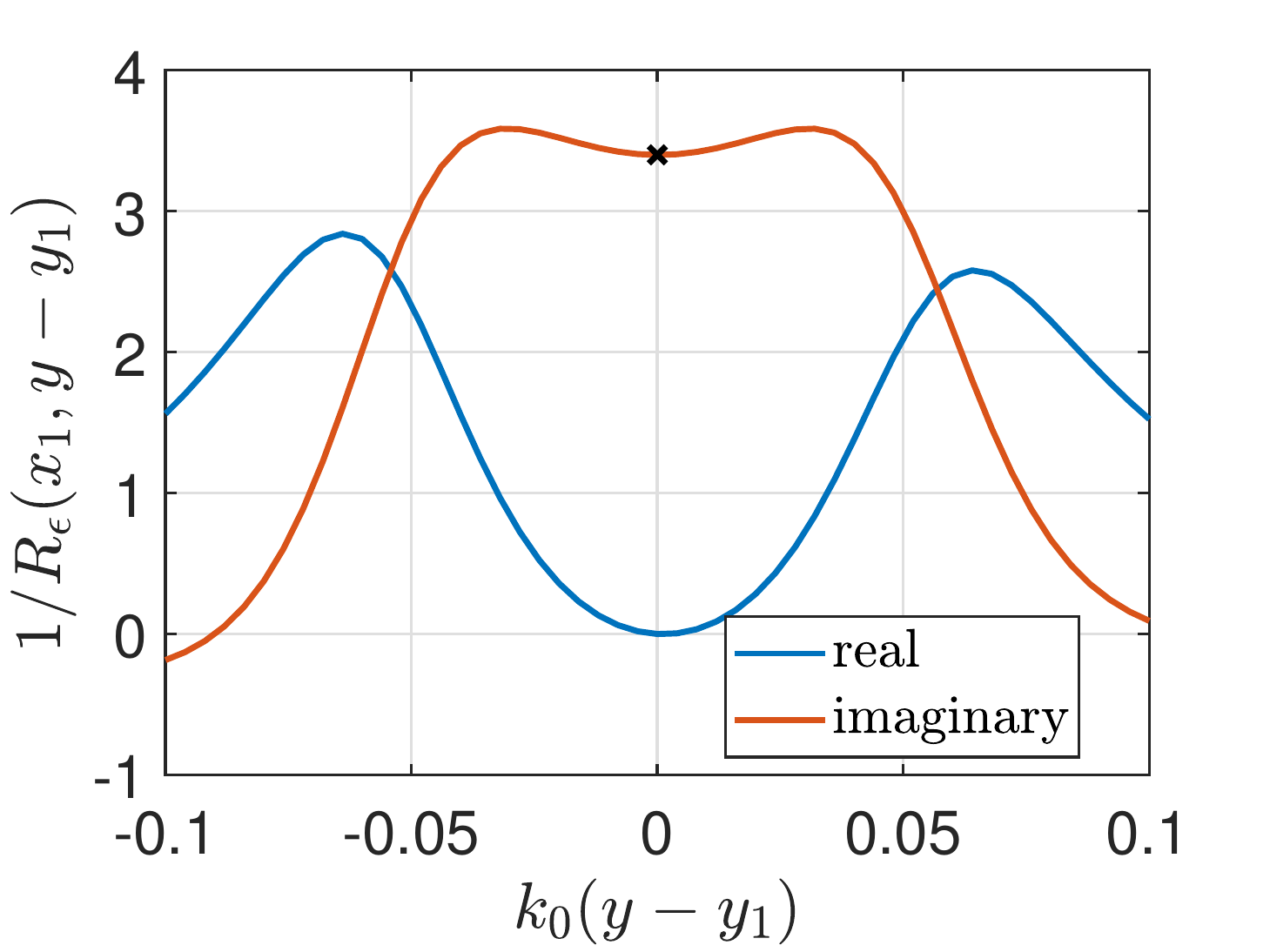}
  \caption{(left) Imaging using $1/F_{\epsilon}(\ypos)$ in a region
    about target 1 located at
    $(x_{1}, y_{1},0) = (0.01\, \text{m}, 0.1\, \text{m},0)$ with
    complex reflectivity $\rho_{1} = 3.4 \mathrm{i}$. Recovering the
    complex reflectivity through evaluation of
    $1/R_{\epsilon}(x-x_{1},y_{1})$ (center) and
    $1/R_{\epsilon}(x_{1},y-y_{1})$ (right). The blue curves give the
    real part of $1/R_{\epsilon}(\ypos)$ and the red curves give the
    imaginary part. The black ``$\times$'' symbol gives the exact
    value of the complex reflectivity, $\rho_{1} = 3.4 \mathrm{i}$.}
  \label{fig:3targets-target1}
  \includegraphics[width=0.32\linewidth]{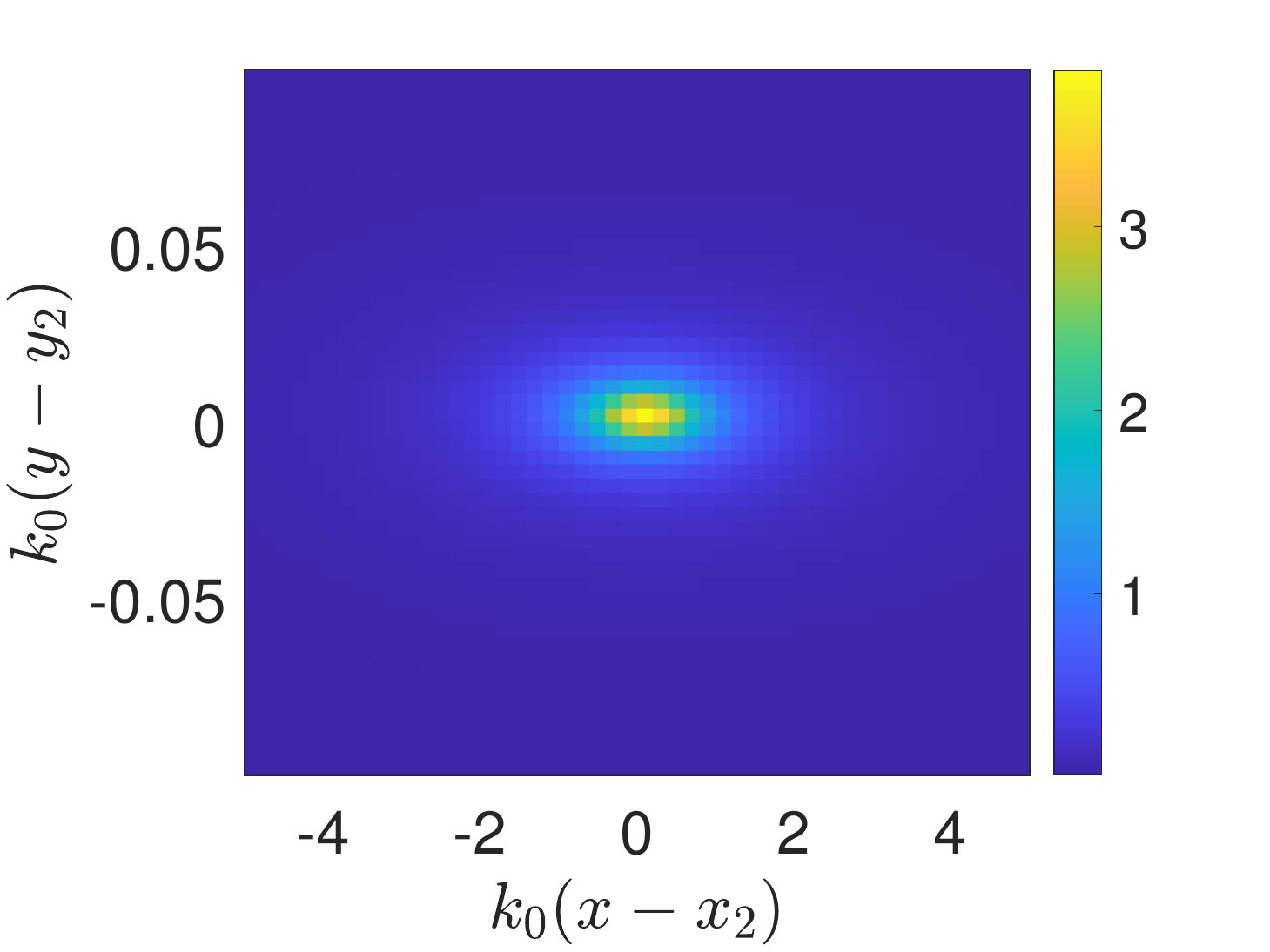}
  \includegraphics[width=0.32\linewidth]{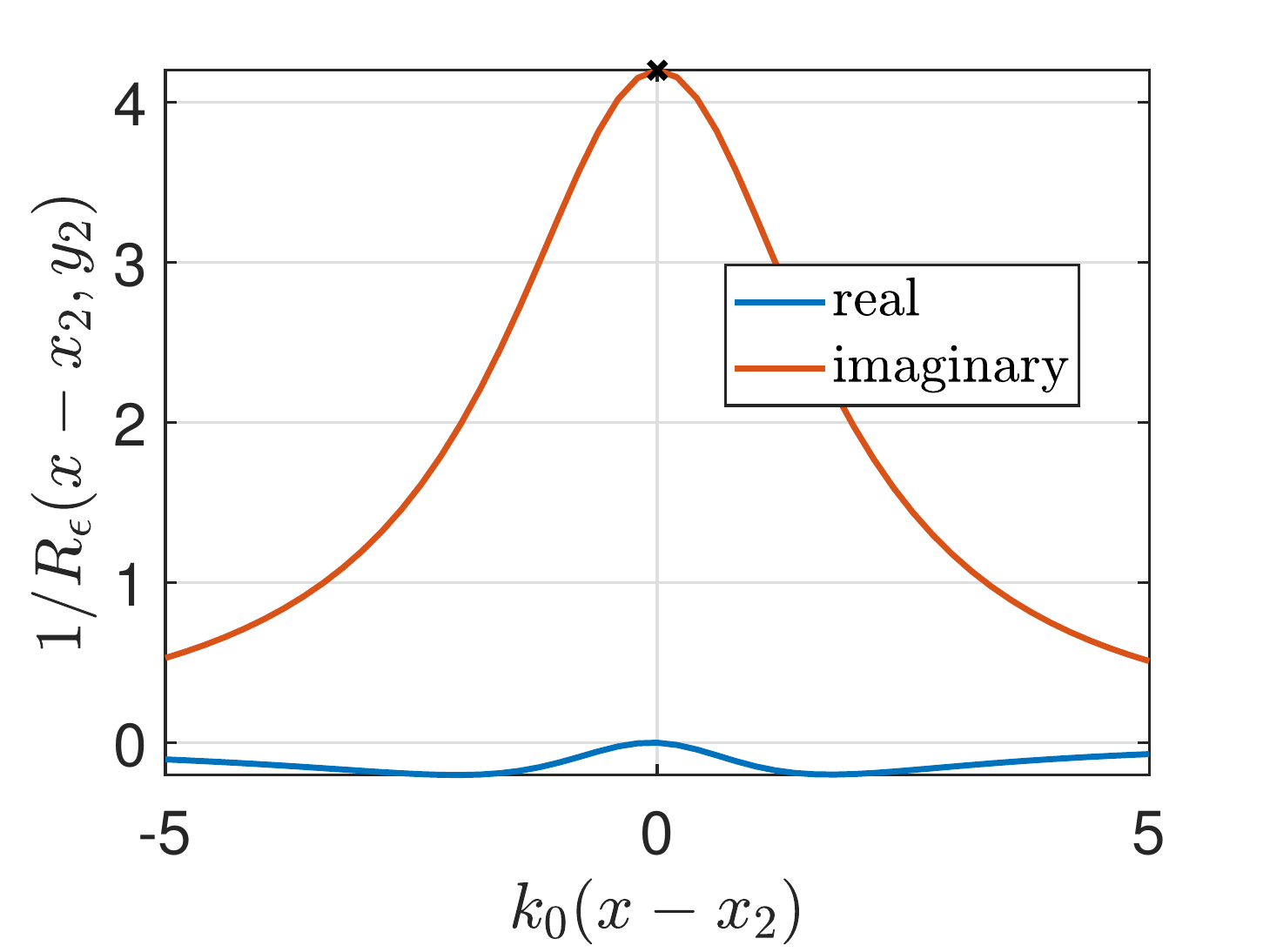}
  \includegraphics[width=0.32\linewidth]{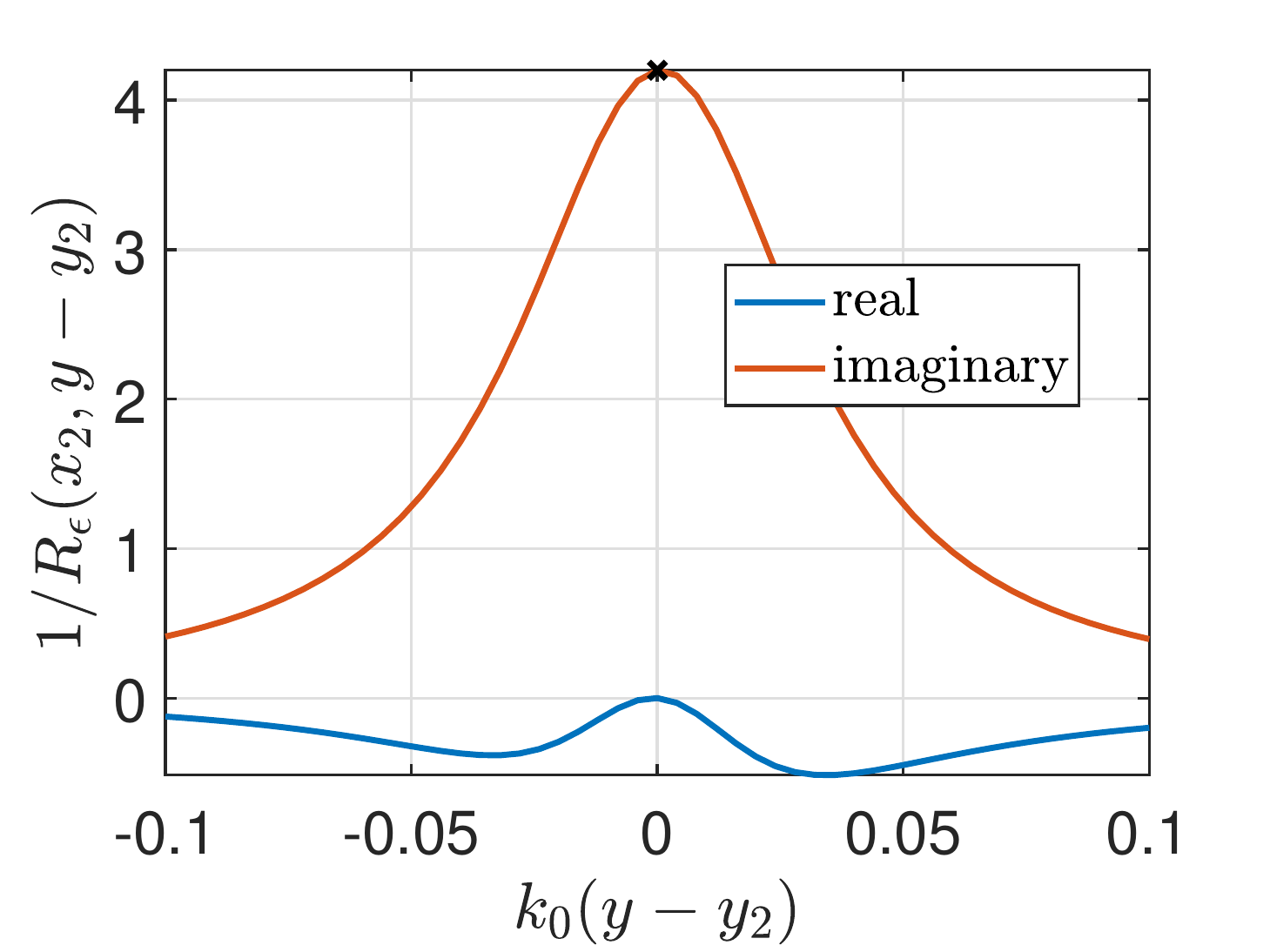}
  \caption{The same as Fig.~\ref{fig:3targets-target1}, except for
    target 2 located at
    $(x_{2}, y_{2}, 0) = (-0.30\, \text{m}, -0.50\, \text{m},0)$ with
    complex reflectivity $\rho_{2} = 4.2 \mathrm{i}$.}
  \label{fig:3targets-target2}
  \includegraphics[width=0.32\linewidth]{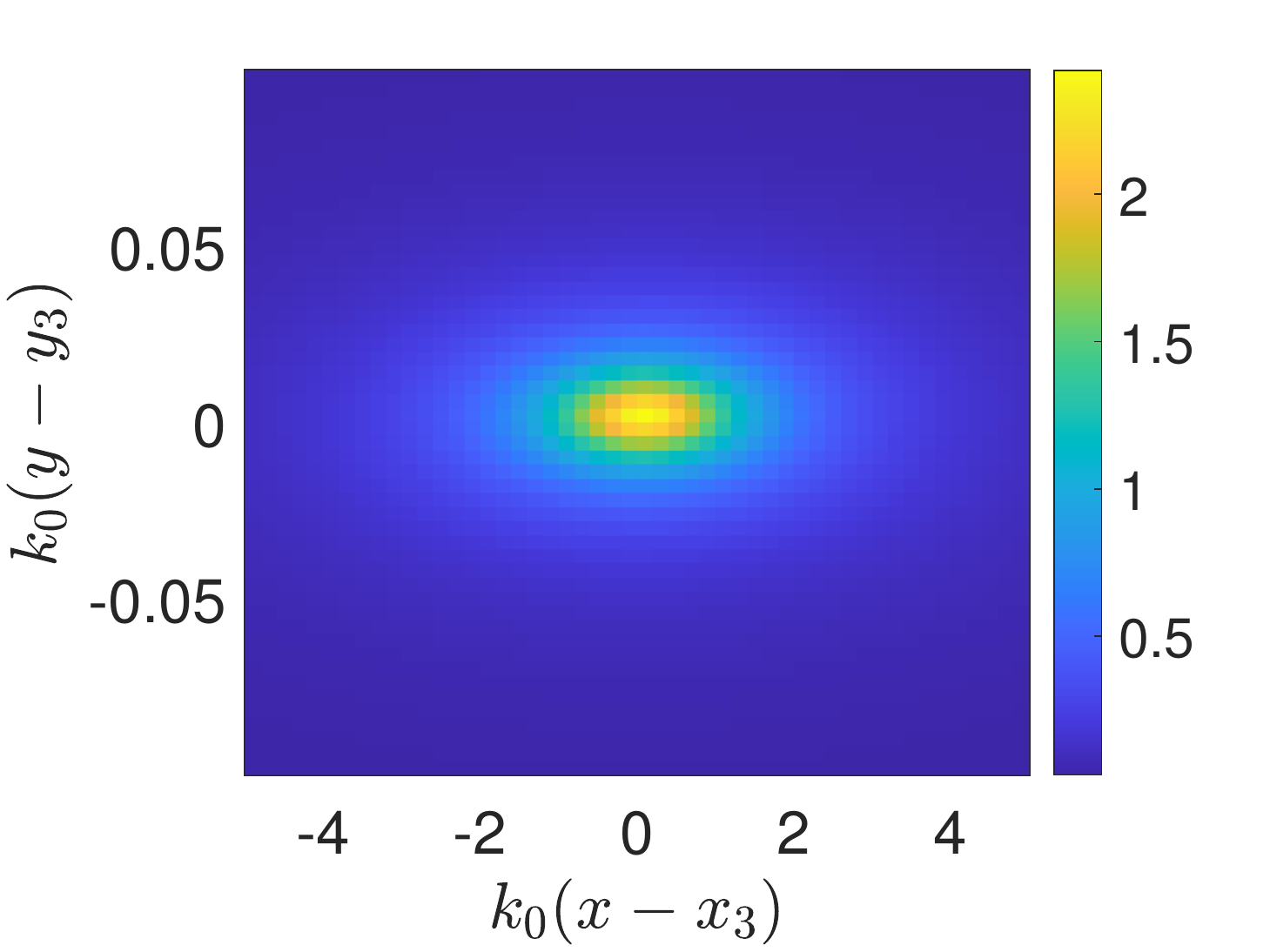}
  \includegraphics[width=0.32\linewidth]{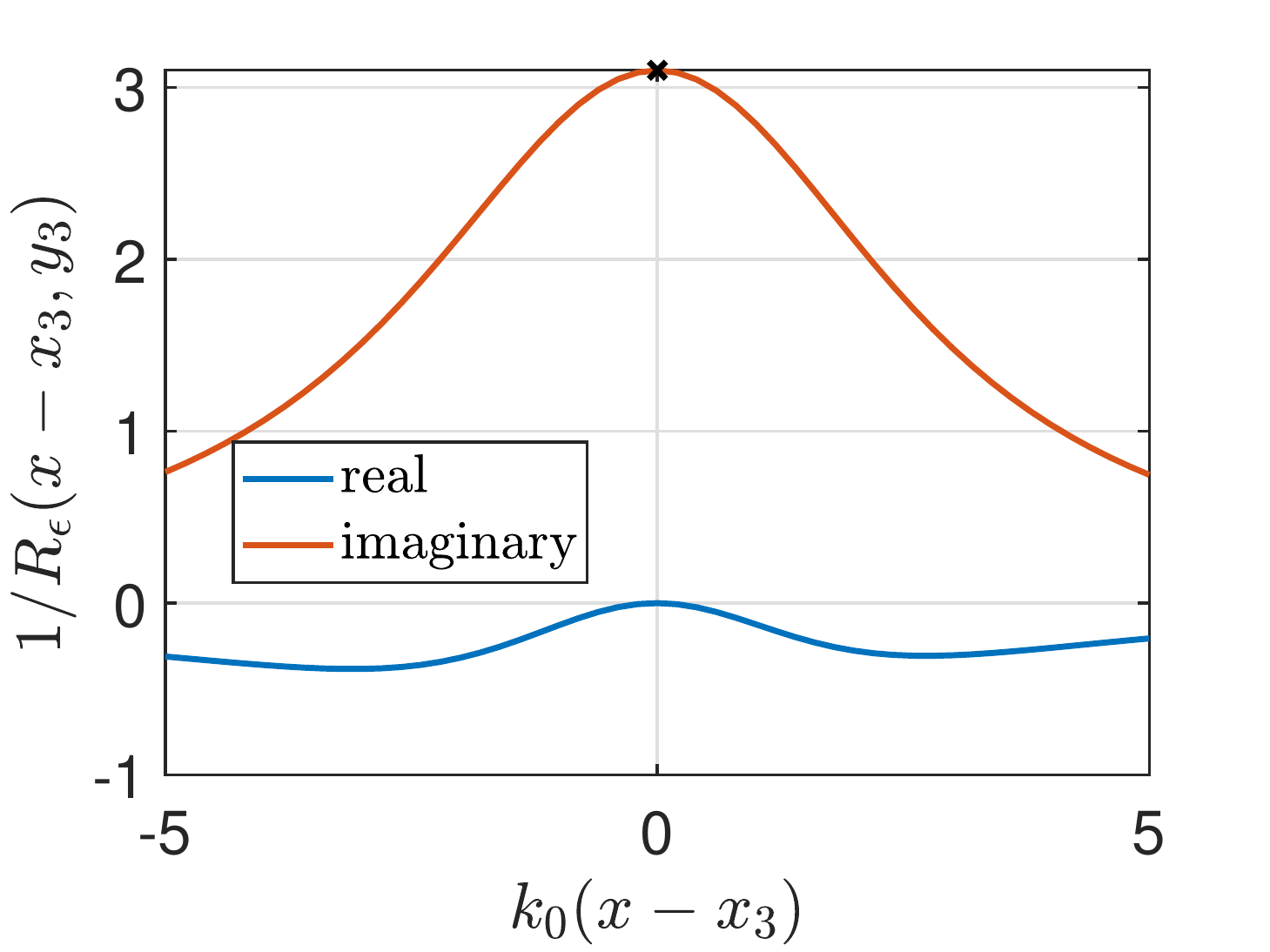}
  \includegraphics[width=0.32\linewidth]{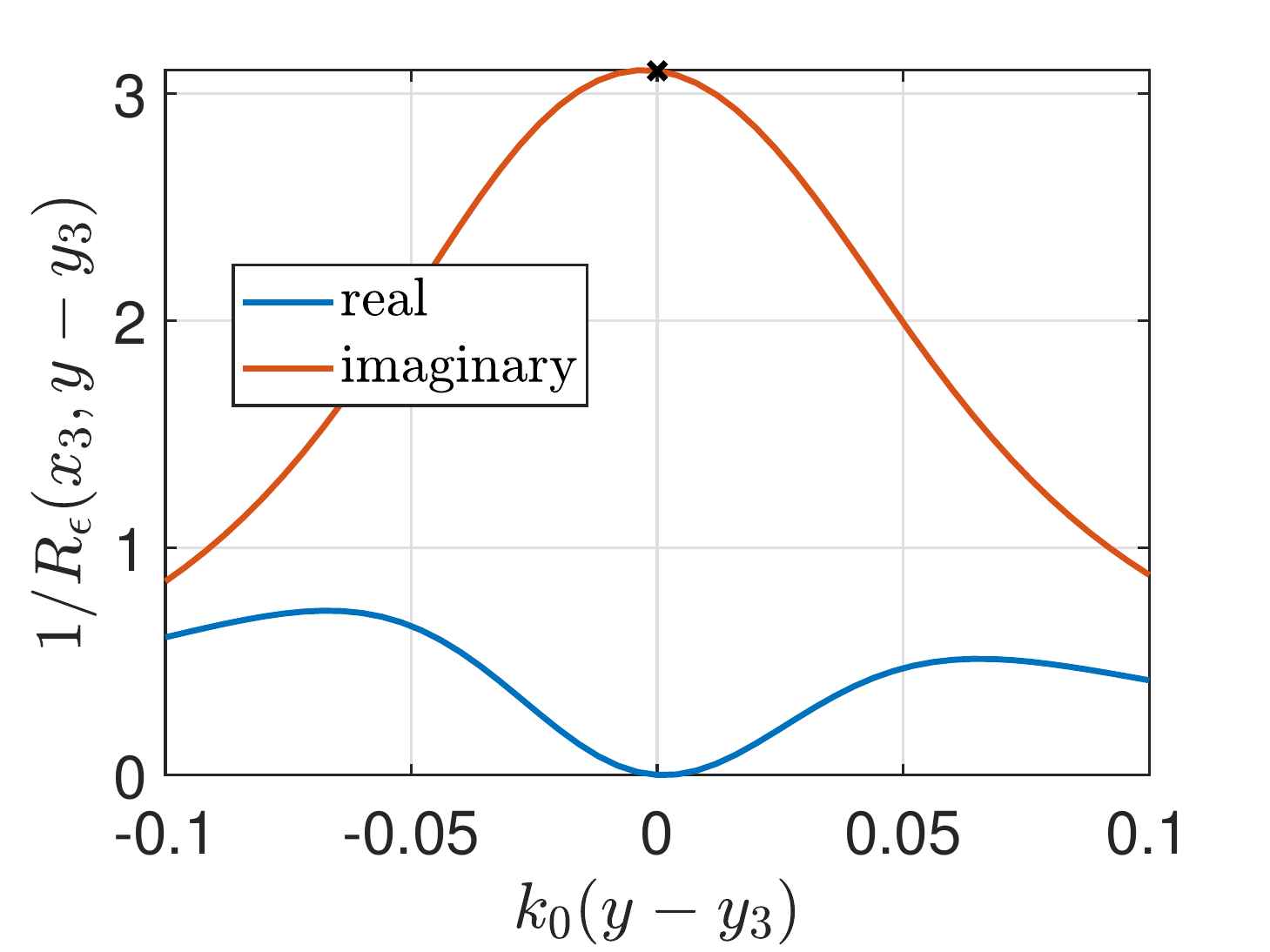}
  \caption{The same as Fig.~\ref{fig:3targets-target1}, except for
    target 3 located at
    $(x_{3}, y_{3},0) (-0.50\, \text{m}, 0.50\, \text{m},0)$ with
    complex reflectivity $\rho_{3} = 3.1 \mathrm{i}$.}
  \label{fig:3targets-target3}
\end{figure}

Using Fig.~\ref{fig:3targets} to determine regions about each of the
target locations, we then evaluate $1/F_{\epsilon}(\ypos)$ using the
same measurements to obtain the location more precisely and
$1/R_{\epsilon}(\ypos)$ to recover the complex reflectivities. In
particular, we plotted the evaluation of $1/F_{\epsilon}(\ypos)$ in a
window of size $10\, k_{0}$ in cross-range ($x$) and $0.2\, k_{0}$ in
range ($y$) about each target using a $51 \times 51$ mesh.  The
results of doing this are shown in Fig.~\ref{fig:3targets-target1} for
target 1, Fig.~\ref{fig:3targets-target2} for target 2, and
Fig.~\ref{fig:3targets-target3} for target 3. The left plots in
Figs.~\ref{fig:3targets-target1} -- \ref{fig:3targets-target3} show
results of evaluating $1/F_{\epsilon}(\ypos)$ in regions about the
respective targets. The center plots in
Figs.~\ref{fig:3targets-target1} -- \ref{fig:3targets-target3} shows
results of evaluating $1/R_{\epsilon}(\ypos)$ on $y = y_{1}$, $y_{2}$,
and $y_{3}$, respectively, and the left plots in
Figs.~\ref{fig:3targets-target1} -- \ref{fig:3targets-target3} shows
results of evaluating $1/R_{\epsilon}(\ypos)$ on $x = x_{1}$, $x_{2}$,
and $x_{3}$, respectively.

When plotting $1/F_{\epsilon}(\ypos)$ in a small region about each
target location, we are readily able to determine the target location
corresponding to where this function attains its local maximum thereby
demonstrating the high-resolution of this imaging method. With the
location of each target determined using these results, we then
evaluate $1/R_{\epsilon}(\ypos)$ in these regions which allows for
recovery of the complex reflectivity of each target. For these
results, these evaluations yielded
$\rho_{1} = 3.4 \mathrm{i} \approx 4.0096 \times 10^{-4} + 3.3990
\mathrm{i}$,
$\rho_{2} = 4.2 \mathrm{i} \approx 1.4427 \times 10^{-4} + 4.2000
\mathrm{i}$, and
$\rho_{3} = 3.1 \mathrm{i} \approx 1.3969 \times 10^{-4} + 3.0998
\mathrm{i}$ thereby demonstrating the high quantitative accuracy
achieved using this method.

\begin{figure}[htb]
  \centering
  \includegraphics[width=0.4\linewidth]{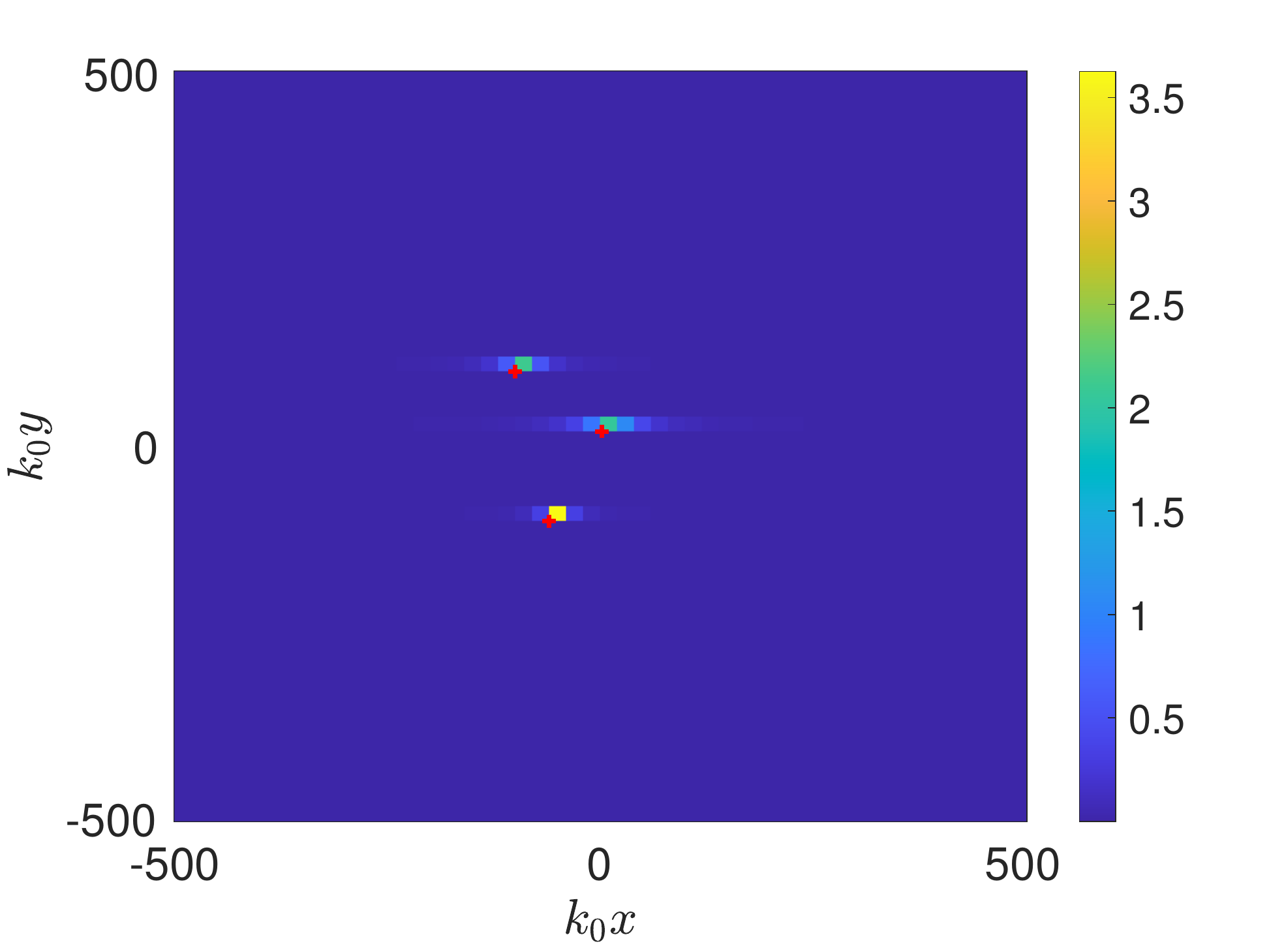}
  \includegraphics[width=0.38\linewidth]{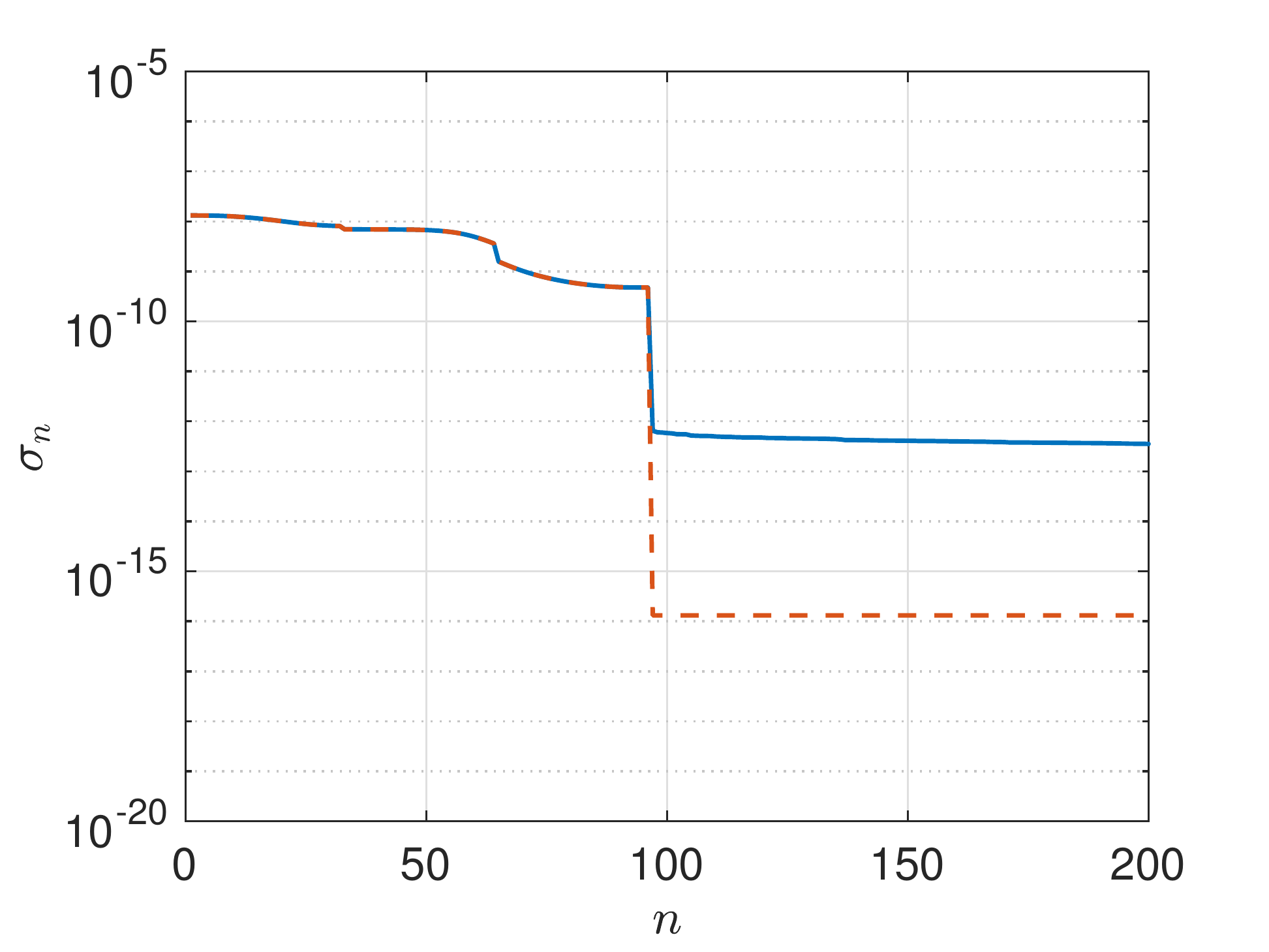}
  \vspace{2pt}\\
  \includegraphics[width=0.4\linewidth]{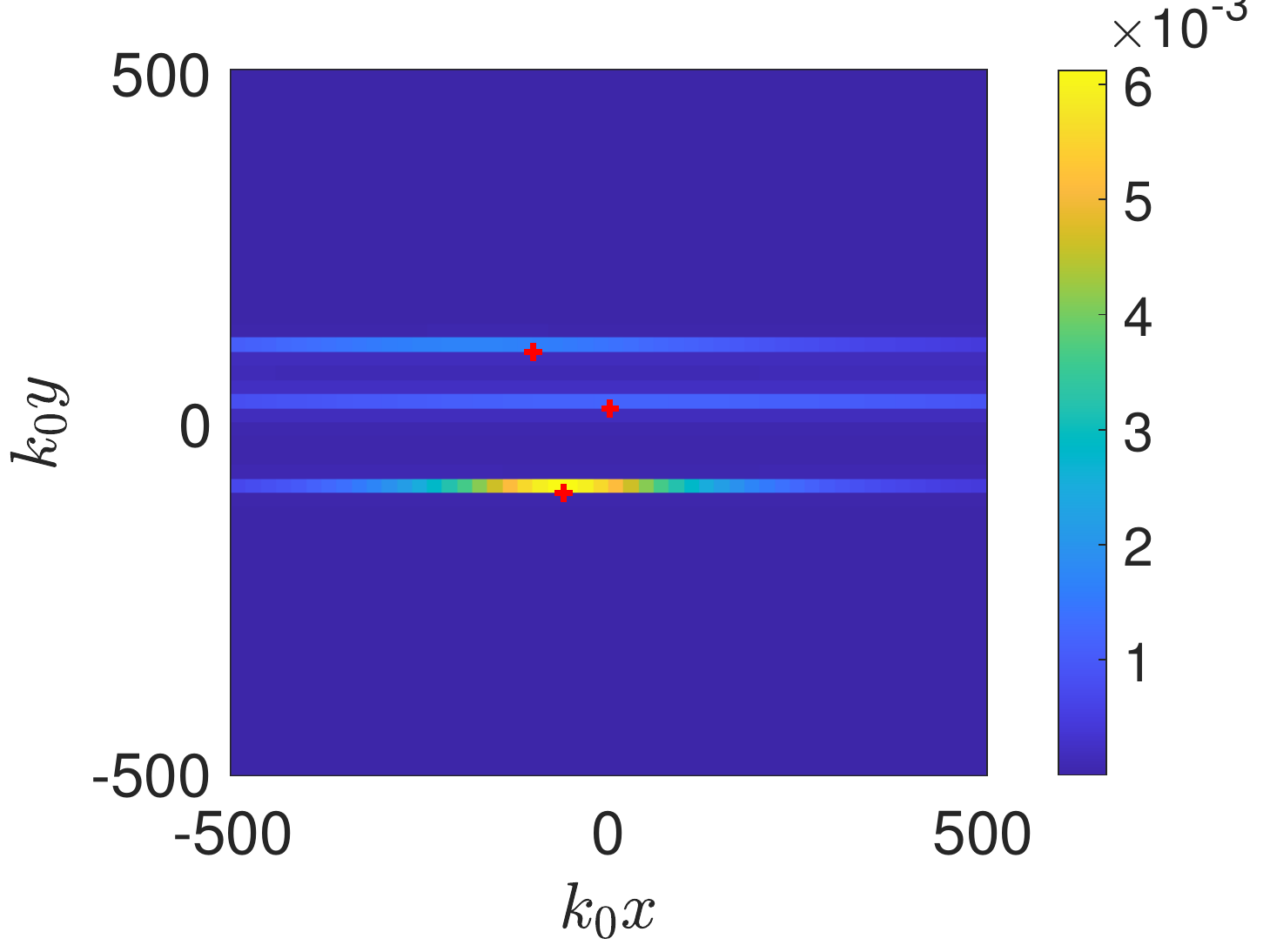}
  \includegraphics[width=0.38\linewidth]{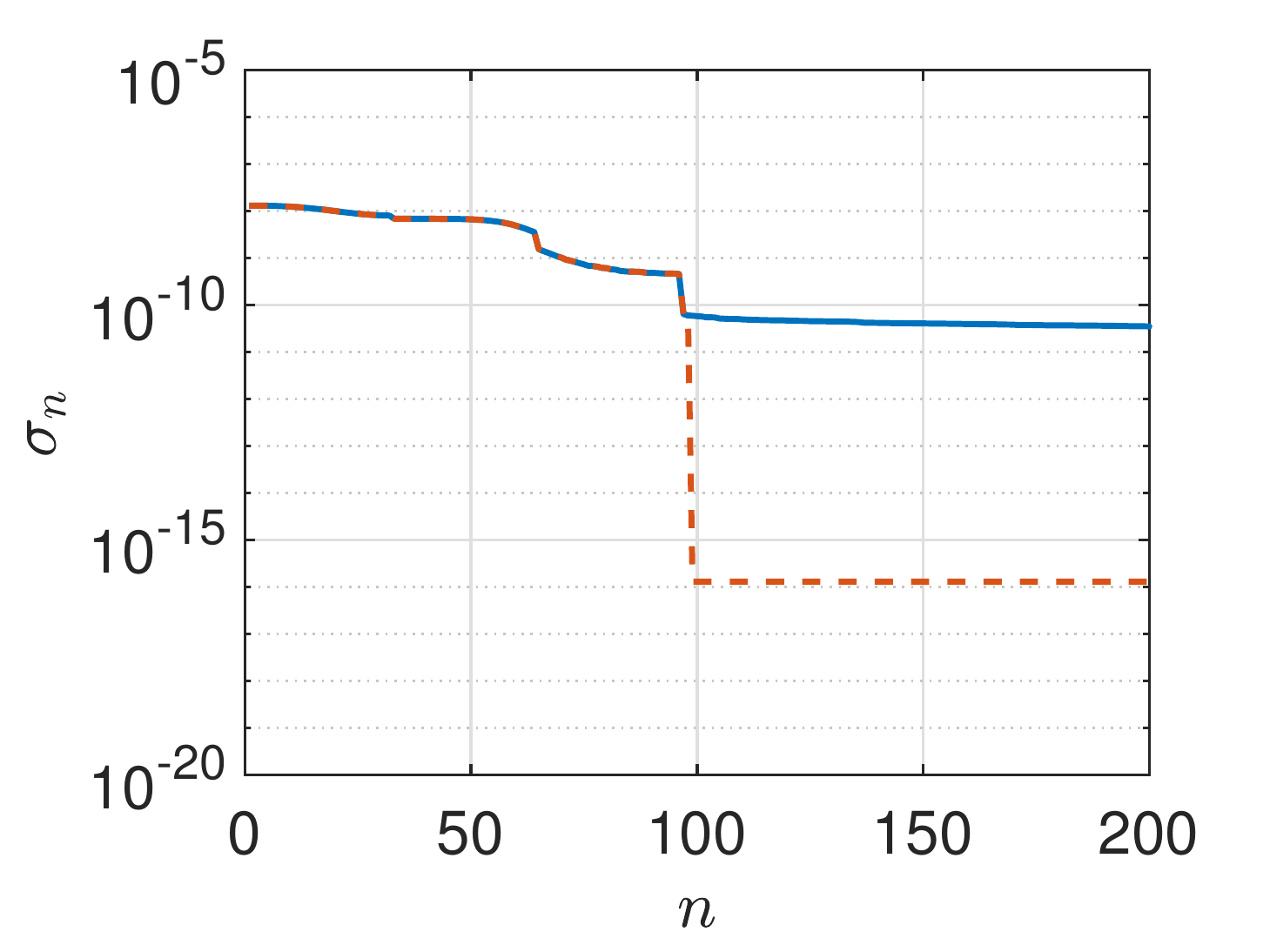}
  \caption{Image produced through evaluation of
    $1/F_{\epsilon}(\ypos)$ with $F_{\epsilon}(\ypos)$ given in
    \eqref{eq:F-function} and $\epsilon = 10^{-8}$ for
    $\text{SNR} = 44.1695\, \text{dB}$ (top left), and the
    corresponding singular value spectrum (top right). The lower left
    and right plots are for $\text{SNR} = 24.1695\, \text{dB}$.  All
    parameter values are the same as those used in
    Fig.~\ref{fig:3targets}.}
  \label{fig:3targets-SNR}
\end{figure}

We showed the effect of SNR on the recovery of the complex
reflectivity for a single point target in Fig.~\ref{fig:error}.  To
study the effect of SNR of imaging multiple point targets, we consider
images for the same scenario produced through evaluation of
$1/F_{\epsilon}(\ypos)$ with $F_{\epsilon}(\ypos)$ given in
\eqref{eq:F-function} and $\epsilon = 10^{-8}$ for
$\text{SNR} = 44.1695\, \text{dB}$ (top row of
Fig.~\ref{fig:3targets-SNR}) and $24.1695\, \text{dB}$ (bottom row of
Fig.~\ref{fig:3targets-SNR}). Except for the SNR values, all parameter
values are the same as those in Fig.~\ref{fig:3targets}(b). Included
with each of those images are the corresponding singular value spectra
for $D_{\text{Prony}}$ given in \eqref{eq:D-Prony} plotted as blue
curves. The dashed red curves show the thresholded singular values in
which $\sigma_{n}$ is replaced with $\epsilon \sigma_{1}$ for all
$\sigma_{n} < 0.01 \sigma_{1}$.

In Fig.~\ref{fig:3targets-SNR}, the image for
$\text{SNR} = 44.1695\, \text{dB}$ (top left) shows the three targets
clearly, but the image for $\text{SNR} = 24.1695\, \text{dB}$ (bottom
left) has much poorer resolution, especially in range. The singular
value spectra in Fig.~\ref{fig:3targets-SNR} (top right and bottom
left) provide valuable insight into the difference between these two
images. A signal subspace method is predicated on the assumption that
one can distinguish the signal and noise subspaces from one
another. With regards to the singular value spectrum, one would like
to have a large ``gap'' between the singular values corresponding to
the signal subspace and those corresponding to the noise subspace. We
observe in Fig.~\ref{fig:3targets-SNR} that when the SNR decreases, so
does the gap separating the singular values for the signal subspace
from those of the noise subspace. Even though the thresholding
criterion of replacing $\sigma_{n}$ with $\epsilon \sigma_{1}$ when
$\sigma_{n} < 0.01 \sigma_{1}$ captures the location of the gap
correctly for both SNR values, a consequence of the narrowing of this
gap is a loss of image resolution. Because $L/a < L/R$, we see a more
severe loss in resolution in range than in cross-range. These results
demonstrate that this imaging method requires a sufficiently high SNR
to be effective and accurate.

\subsection{Imaging in random media}

We consider perturbations in travel times resulting from wave
propagation in random media. Assuming an inhomogeneous velocity
profile of the form
\begin{equation}
  \label{eq:random_wave_speed}
  \frac{1}{c^2(\xpos)} = \frac{1}{c_0^2} \left( 1 +\sigma\mu
    \left(\frac{\xpos}{\ell} \right) \right),
\end{equation}
we approximate the Green's function between points $\xpos$ and $\ypos$
at frequency $\omega$ by
\begin{equation}
  \label{eq:random_green_func}
  G(\xpos,\ypos; \omega)=G_0(\xpos,\ypos; \omega)
  \exp{\left[i \omega \nu (\xpos,\ypos)\right]}, 
\end{equation}
with $\nu (\xpos,\ypos)$ denoting the random travel time function
\begin{equation}
  \nu (\xpos,\ypos) = \frac{\sigma |\xpos-\ypos|}{2c_0} \int_{0}^{1}
  \mu \left( \frac{\ypos}{\ell} + (\xpos-\ypos)\frac{s}{\ell}
  \right) \mathrm{d}s.
\label{eq:rtt}
\end{equation}
Here $c_0$ denotes the average propagation speed, assumed constant,
$\ell$ is the correlation length and $\sigma$ is the strength of the
fluctuations. The stationary random process $\mu(\cdot)$ has mean zero
and normalized auto-correlation function
$R(|\xpos-\xpos'|)= \mathbb{E}(\mu(\xpos)\mu(\xpos'))$, so that
$R(0)=1$, and $\int_0^\infty R(r) r^2 dr < \infty$.  In
\eqref{eq:random_green_func}, $G_0$ denotes the Green's function in
the homogeneous medium with propagation speed $c_0$.

\begin{figure}[t]
  \centering
  \begin{subfigure}[t]{0.3\linewidth}
    \includegraphics[width=\linewidth]{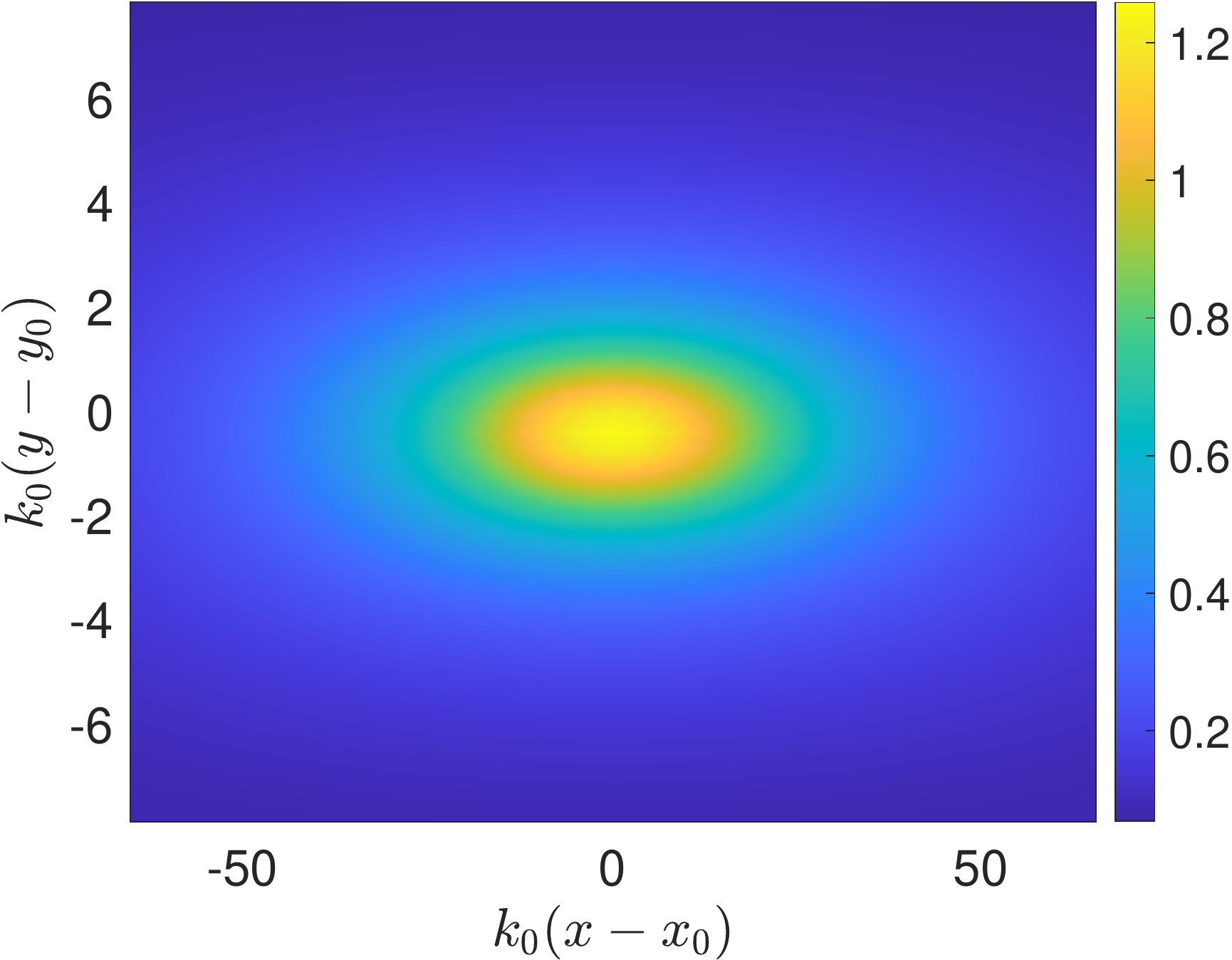} 
    \caption{$\tilde{\sigma}/\sqrt{\epsilon} = 0$}
  \end{subfigure}
  \begin{subfigure}[t]{0.3\linewidth}
    \includegraphics[width=\linewidth]{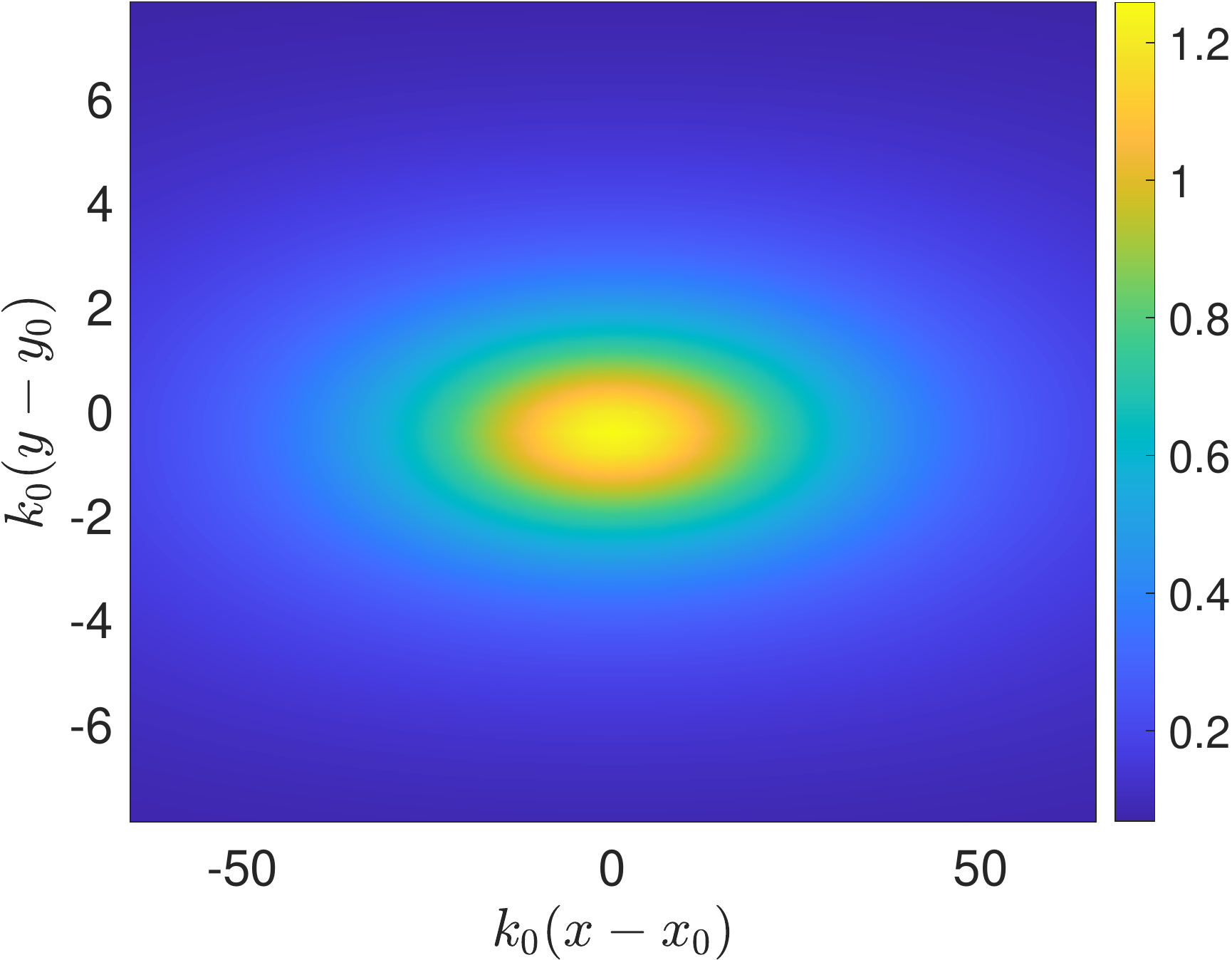} 
    \caption{$\tilde{\sigma}/\sqrt{\epsilon} = \epsilon$}
  \end{subfigure}
  \begin{subfigure}[t]{0.3\linewidth}
    \includegraphics[width=\linewidth]{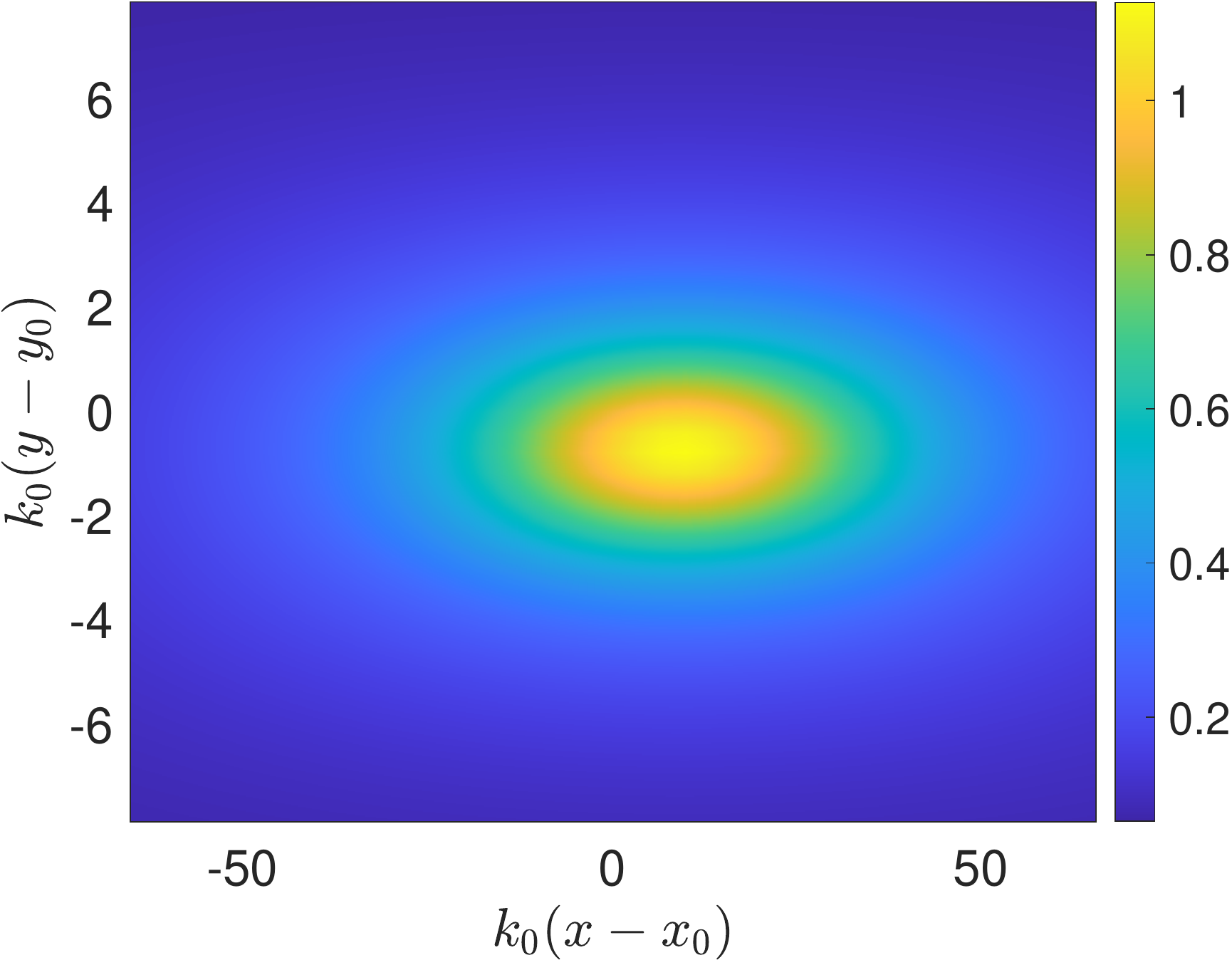} 
    \caption{$\tilde{\sigma}/\sqrt{\epsilon} = 1$}
  \end{subfigure}
  \caption{Imaging through evaluation of $1/F_{\epsilon}(\ypos)$ with
    $F_{\epsilon}(\ypos)$ given in \eqref{eq:F-function} and
    $\epsilon = 0.02$ for a point target located at range 
    $L=100 \ell$ in a random medium with $\ell=100 \lambda$. 
    The strength of the fluctuations in the random medium is such that: (a)
    $\tilde{\sigma}/\sqrt{\epsilon} = 0$, (b)
    $\tilde{\sigma}/\sqrt{\epsilon} = \epsilon$, and (c)
    $\tilde{\sigma}/\sqrt{\epsilon} = 1$.}
  \label{fig:iid-rtt}
\end{figure}

The random travel time model provides an approximation of the Green's
function in the high-frequency regime in random media with weak
fluctuations $\sigma \ll 1$ and large correlation lengths $\ell$
compared to the wavelength $\lambda$. The propagation distance $L$ is
assumed to be large with respect to the correlation length,
$L \gg \ell$, so that the scattering induced by the random medium
perturbations has an order one effect on the phase of the Green's
function.  This is true when \cite{BGPT-11}
\begin{equation}
\label{cond}
\frac{\sigma^2L^3}{\ell^3}\ll\frac{\lambda^2}{\sigma^2\ell L} \sim 1\, ,
\end{equation}
Following \cite{MNPT-16} we introduce the dimensionless parameter 
\begin{eqnarray}
\label{eq:sigma0}
\sigma_0= \lambda/\sqrt{\ell L},
\end{eqnarray}
and scale the fluctuations of the random medium so that 
\begin{eqnarray}
\label{eq:sigmat}
\tilde{\sigma} = \frac{\sigma}{\sigma_0} 
\end{eqnarray}
is order one according to \eqref{cond}.  

In contrast to the previous results, we consider here a ``flat''
geometry for which $H = 0$. The propagation distance is $L=100 \ell$
and the correlation length in the random medium is $\ell=100
\lambda$. The synthetic array aperture is $a=24 \ell$ and the
bandwidth parameter $\beta=0.5$.

In Fig.~\ref{fig:iid-rtt}, we show images of a single point target
formed through evaluation of $1/F_{\epsilon}(\ypos)$ for a single
realization of the random medium with different values of
$\tilde{\sigma}$. The magnitude of the complex reflectivity of the
target is $|\rho_{0}| = 1.2584$. For Fig.~\ref{fig:iid-rtt}(a),
$\tilde{\sigma} = 0$ corresponding to a homogeneous medium. For
Figs.~\ref{fig:iid-rtt}(b) and (c), the values of of $\tilde{\sigma}$
are set so that $\tilde{\sigma}/\sqrt{\epsilon} = \epsilon$ and $1$,
respectively.  As predicted by Theorem \ref{thm:random}, the image
with $\tilde{\sigma}/\sqrt{\epsilon} = \epsilon \ll 1$ is stable and
qualitatively and quantitatively similar to the one obtained for the
homogeneous medium.  For $\tilde{\sigma}/\sqrt{\epsilon} = 1$ the
image is not focused on the true target location, the resolution is
decreased, and the reconstructed absolute value of the reflectivity is
less accurate.

\begin{figure}[t]
  \centering
  \begin{subfigure}[t]{0.4\linewidth}
    \includegraphics[width=\linewidth]{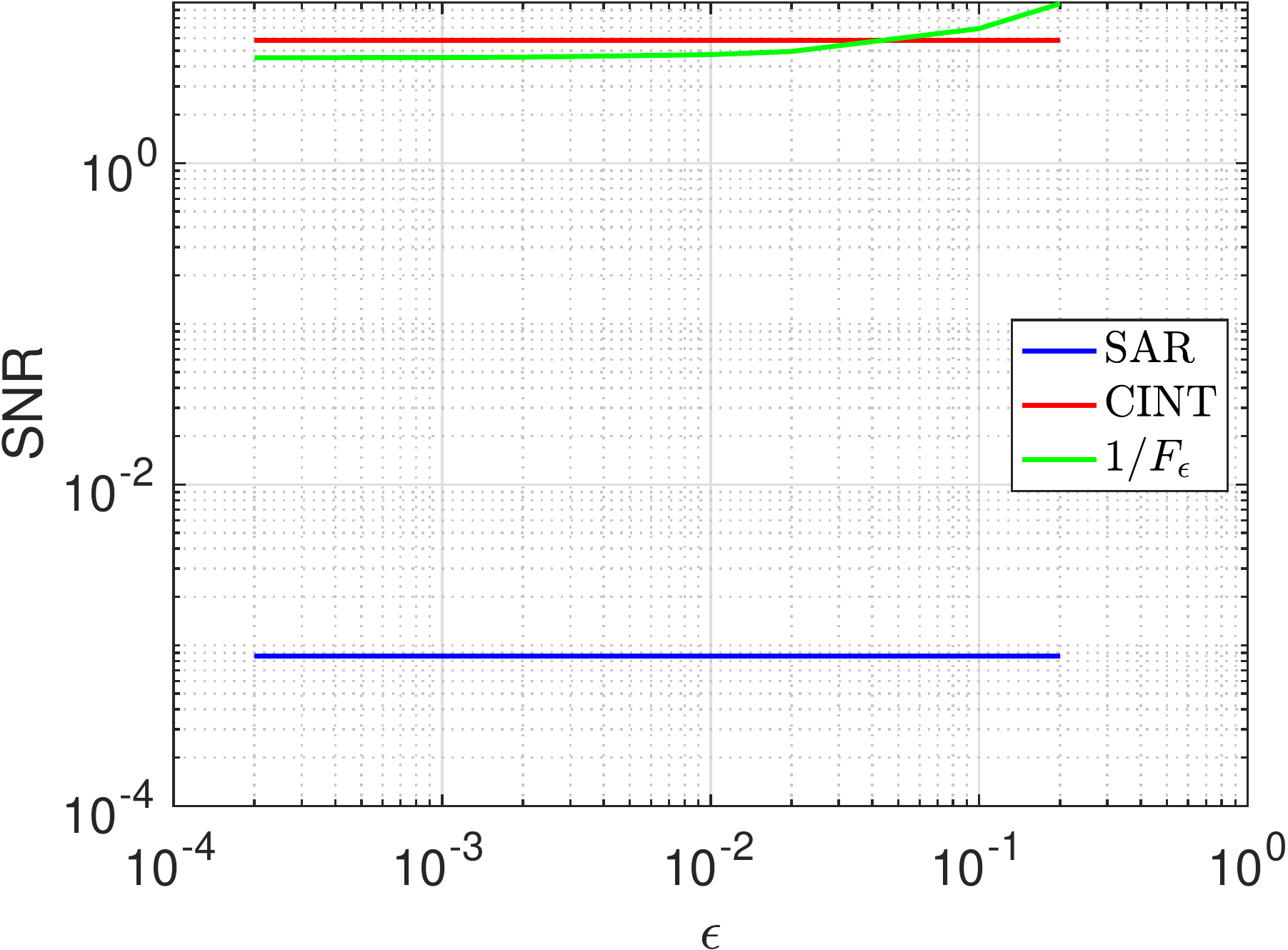}
    \caption{$\tilde{\sigma} = 0.4$}
  \end{subfigure}
  \begin{subfigure}[t]{0.4\linewidth}
    \includegraphics[width=\linewidth]{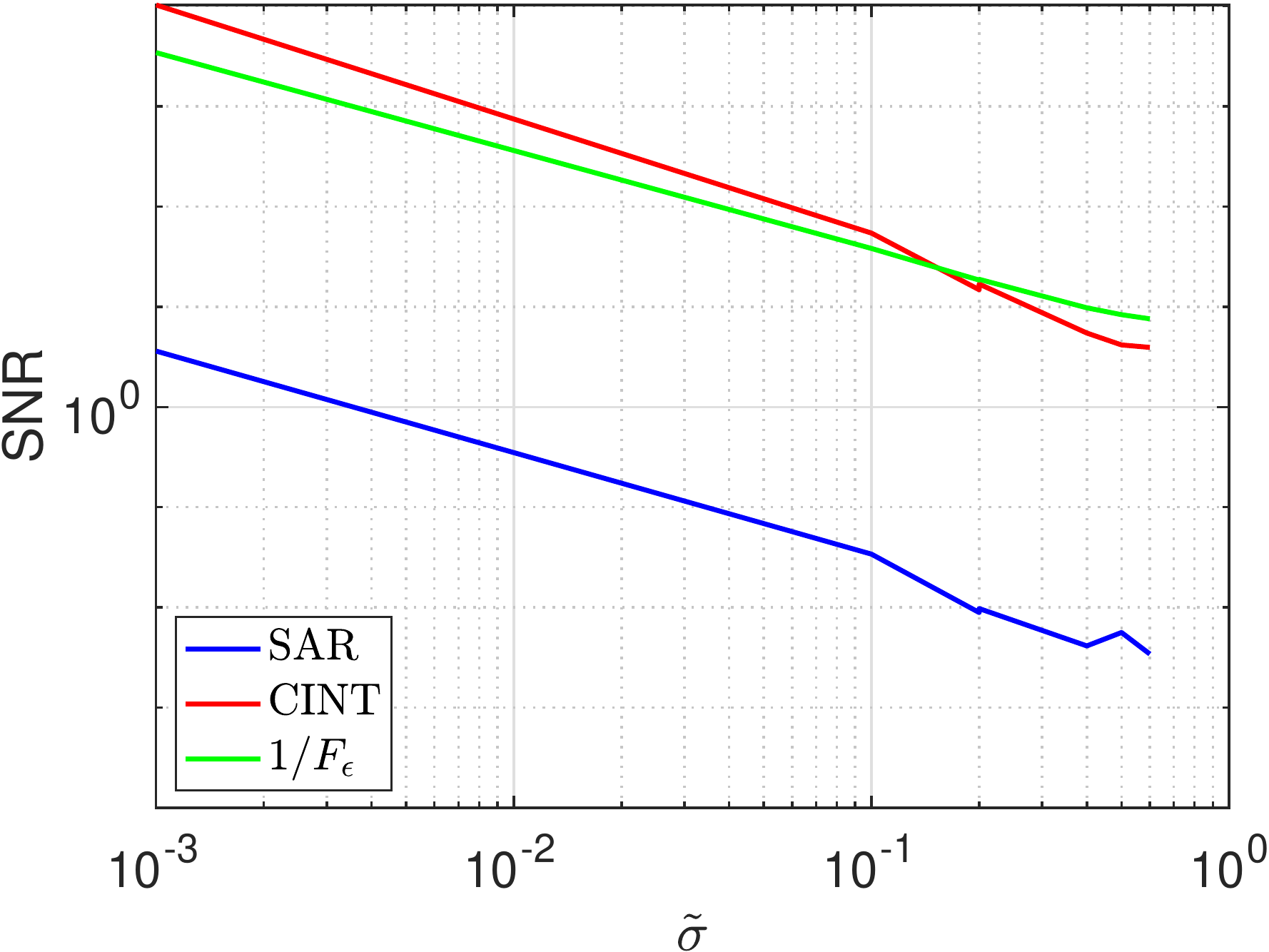}
    \caption{$\epsilon = 0.2$}
  \end{subfigure}
  \caption{Stability of the image as a function of the parameter
    $\epsilon$ with $\tilde{\sigma} = 0.4$ (a) and as a function of
    $\tilde{\sigma}$ with $\epsilon = 0.2$ (b). The CINT image is
    computed with $X_d=a/6$ and $\Omega_d=B/2$.}
  \label{fig:stab-rtt1}
\end{figure}

Following \cite{BGPT-11}, we compute the image's SNR defined as the
mean of the image divided by its standard deviation in a small area
around the true target location to estimate the stability of the
imaging method. The sample mean and the sample standard deviation are
computed using 100 realizations of the random medium with the same
characteristics (correlation length and strength of fluctuations).
For comparison, we also compute this SNR for the classical SAR image
and the CINT image. The CINT method requires specifying two key
parameters, the decoherence length $X_{d}$ and the decoherence frequency
$\Omega_{d}$. In the CINT results that follow, we have set
$X_{d} = a/6$ and $\Omega_{d} = B/2$. The results of these comparisons
are shown in Fig. \ref{fig:stab-rtt1} where we compare SNR as a
function of $\epsilon$ with $\tilde{\sigma} = 0.4$
(Fig.~\ref{fig:stab-rtt1}(a)) and as a function of $\tilde{\sigma}$
with $\epsilon = 0.2$ (Fig.~\ref{fig:stab-rtt1}(b)). Figure
\ref{fig:stab-rtt1}(a) and (b) illustrate the well-known result that
classical SAR imaging results are statistically unstable in random
media \cite{BGPT-11} since this SNR is very low. These results also
suggest similar stability for CINT and $1/F_\epsilon$, both with
comparably large SNRs. In Fig.~\ref{fig:stab-rtt1}(a), neither
classical SAR nor CINT depend on $\epsilon$, so we see no change in
behavior. However, as $\epsilon$ increases relative to
$\tilde{\sigma}$ such that $\tilde{\sigma}/\sqrt{\epsilon}$ becomes
small, we find that the SNR for $1/F_{\epsilon}$ becomes larger than
that for CINT. In Fig.~\ref{fig:stab-rtt1}(b), all images decrease in
SNR as $\tilde{\sigma}$ increases. However, the SNR for the CINT and
$1/F_\epsilon$ images is three orders of magnitude higher than the one
for classical SAR. It is important to note that the control offered by
$\epsilon$ is limited because one cannot set the value of $\epsilon$
to be larger than $1$. Otherwise, one cannot separate the signal from
the noise subspace.

\begin{figure}[t]
  \centering
  \begin{subfigure}[t]{0.4\linewidth}
    \includegraphics[width=\linewidth]{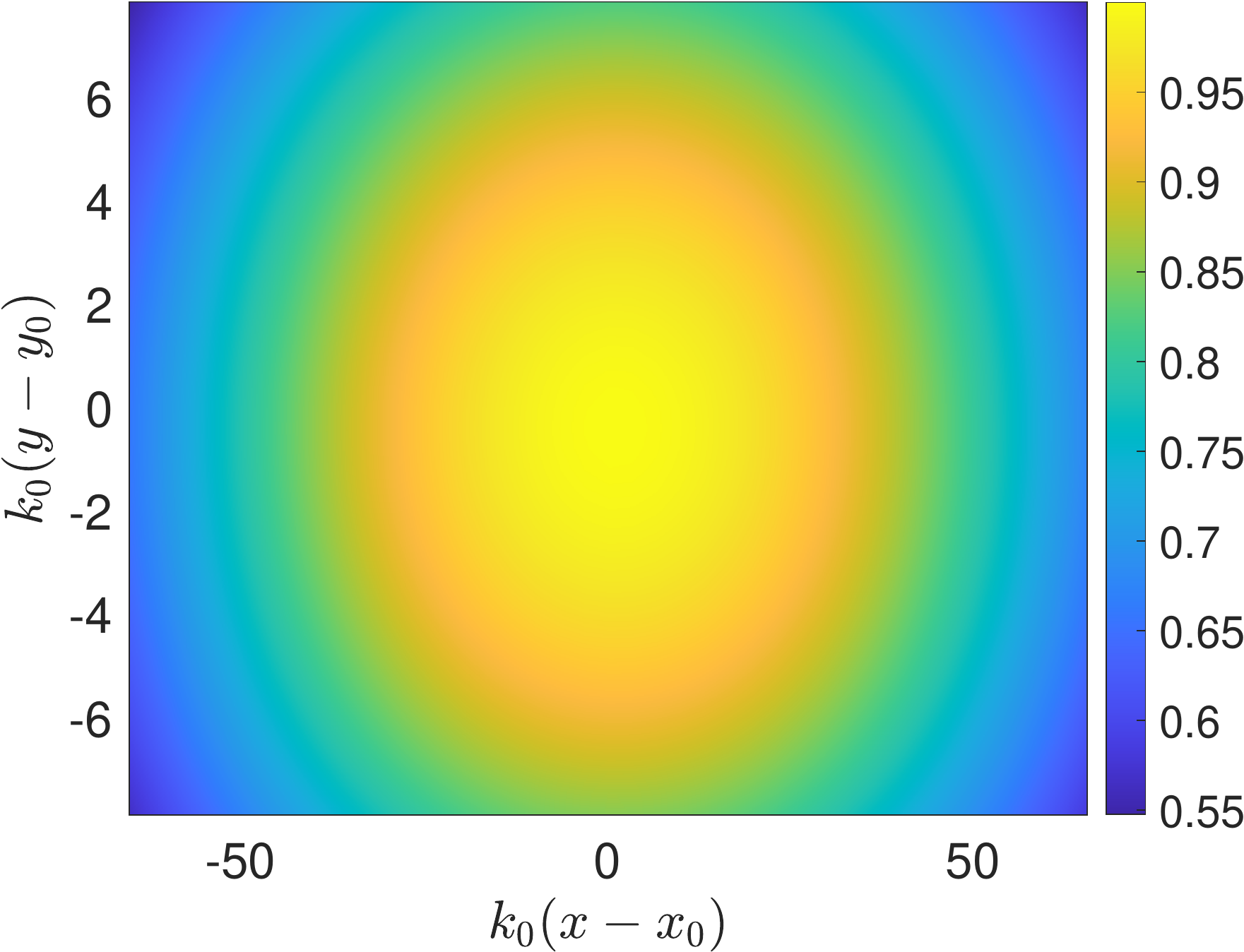}
    \caption{CINT}
  \end{subfigure}
  \begin{subfigure}[t]{0.4\linewidth}
    \includegraphics[width=\linewidth]{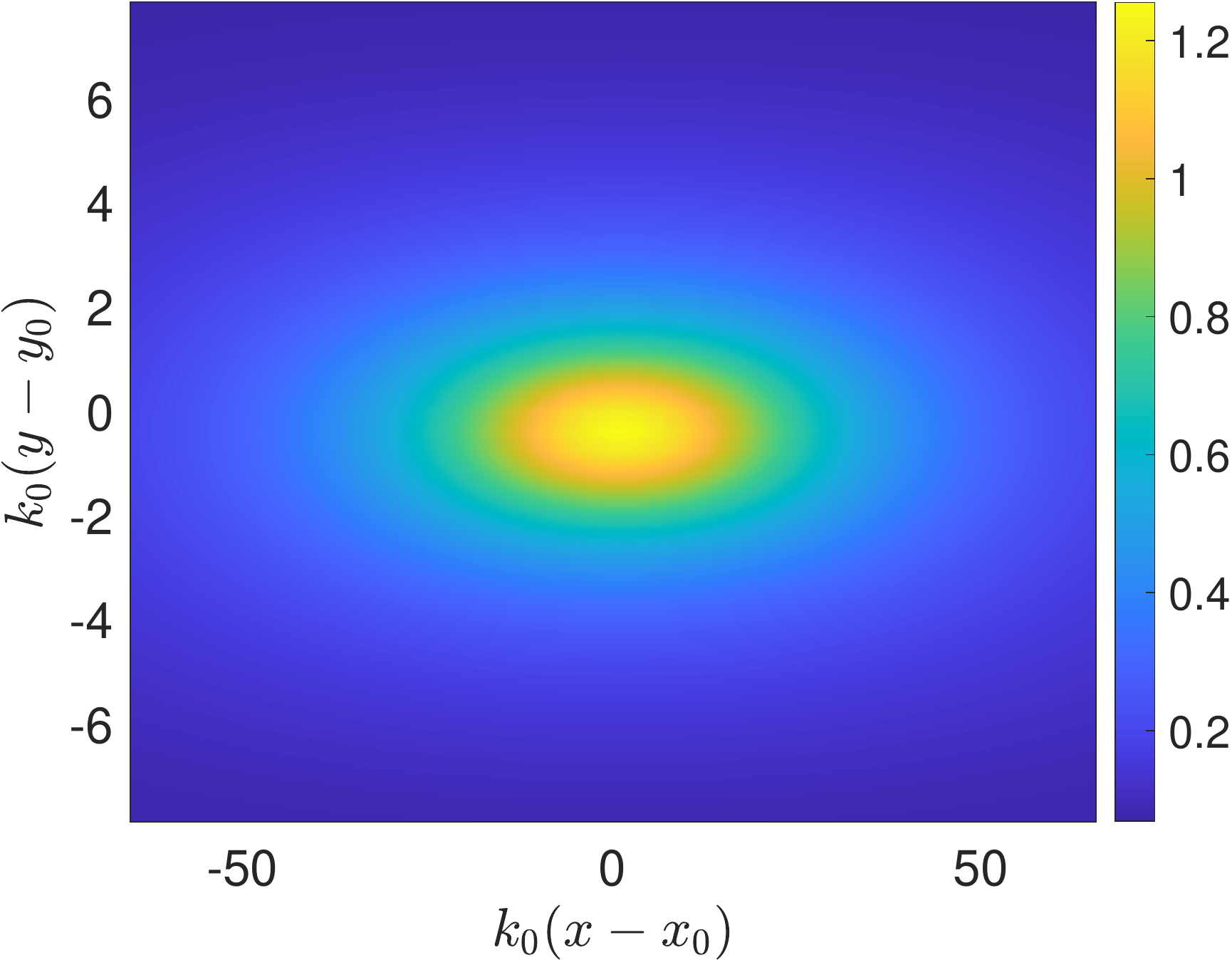}
    \caption{$1/F_{\epsilon}$}
  \end{subfigure}
  \caption{Comparison of images formed using CINT with $X_{d} = a/6$
    and $\Omega_{d} = B/2$ (a), and $1/F_{\epsilon}$ with $\epsilon$
    and $\tilde{\sigma}$ satisfying
    $\tilde{\sigma}/\sqrt{\epsilon} =\sqrt{\epsilon}$.}
  \label{fig:cint}  
\end{figure}

Although the images formed using CINT and $1/F_{\epsilon}$ have
similar stability behaviors, the image of $1/F_\epsilon$ has a much
better resolution. In Fig.~\ref{fig:cint} we compare images formed
using CINT and $1/F_{\epsilon}$ with $\epsilon$ and $\tilde{\sigma}$
set so that $\tilde{\sigma}/\sqrt{\epsilon} =\sqrt{\epsilon}$. These
results show that the image formed by $1/F_{\epsilon}$ is focused more
tightly on the target location in comparison to the image formed by
CINT. Additionally, there is quantitative information available from
the image formed by $1/F_{\epsilon}$. This tighter focus is especially
true for range although resolution is also better with cross-range.

To see the effect of this improved resolution, we compare images
formed using CINT and $1/F_{\epsilon}$ when the imaging region
contains four point targets situated closely to one another in
Fig.~\ref{fig:comp4a}. For all of these images,
$\tilde{\sigma} = 0.2$. Figures \ref{fig:comp4a}(b) and (c) are formed
using $1/F_{\epsilon}$ with $\epsilon = 0.01$ and $0.001$,
respectively. Here, the resolution of the CINT image does not allow
for identification of the four targets. The $1/F_{\epsilon}$ with
$\epsilon = 0.01$ image has a sharper resolution, but allows for
identification only three of the four targets.  In contrast, the
$1/F_{\epsilon}$ image with $\epsilon = 0.001$ shows four distinct
peaks indicating the target locations. These results show the
potential importance of being able to tune the resolution of an image
by varying the parameter $\epsilon$, even with random perturbations to
the travel times.

\section{Conclusions}
\label{sec:conclusion}

We have introduced and analyzed a quantitative signal subspace imaging
method for multi-frequency SAR measurements. The key to this method
involves a simple rearrangement of the frequency data at each spatial
location along the flight path where measurements are taken using the
Prony method. Using this rearranged frequency data, this method
involves two stages corresponding to two explicit imaging functionals,
\eqref{eq:F-function} and \eqref{eq:R-function}.

\begin{figure}[t]
  \centering
  \begin{subfigure}[t]{0.3\linewidth}
    \includegraphics[width=\linewidth]{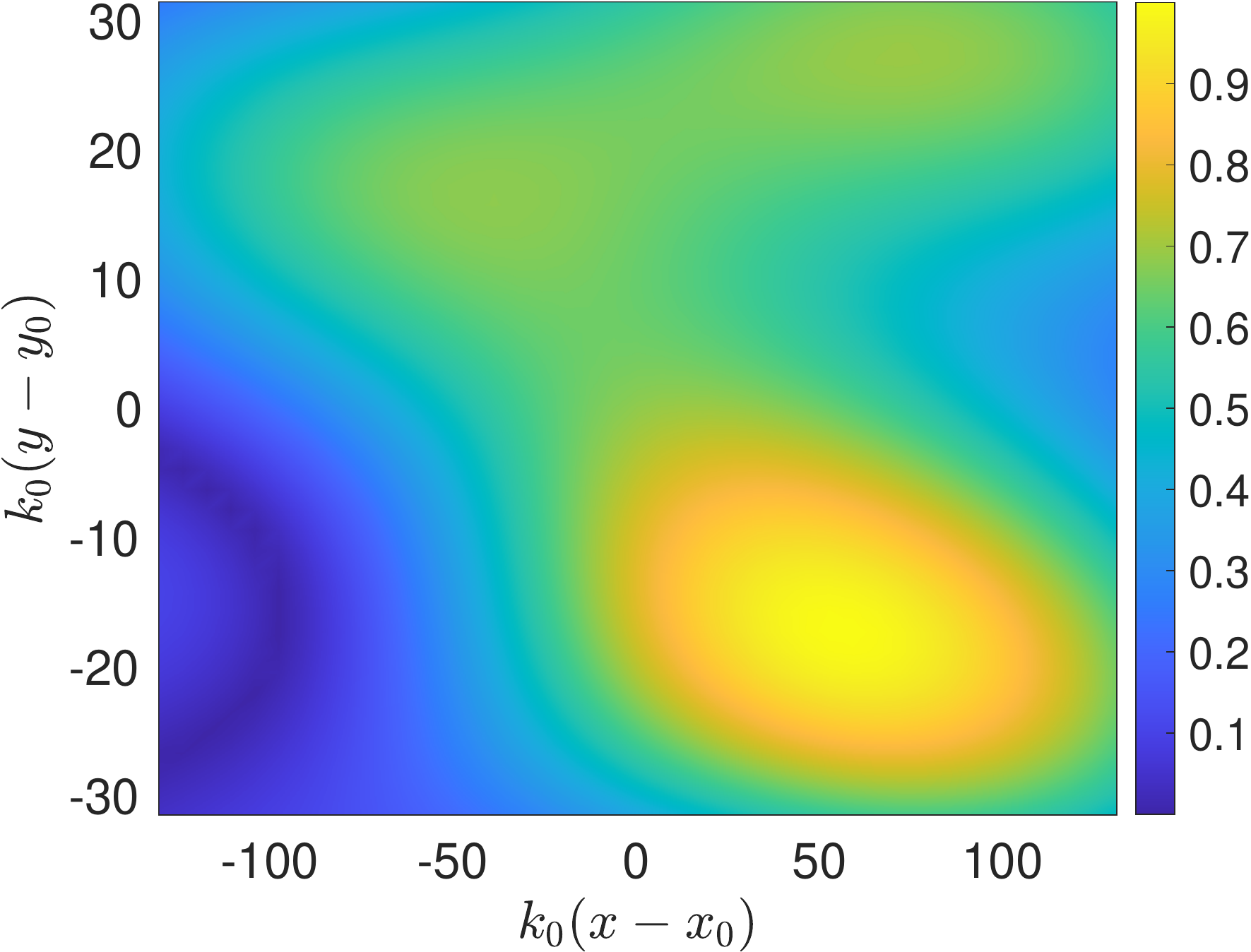}
    \caption{CINT}
  \end{subfigure}
  \begin{subfigure}[t]{0.3\linewidth}
    \includegraphics[width=\linewidth]{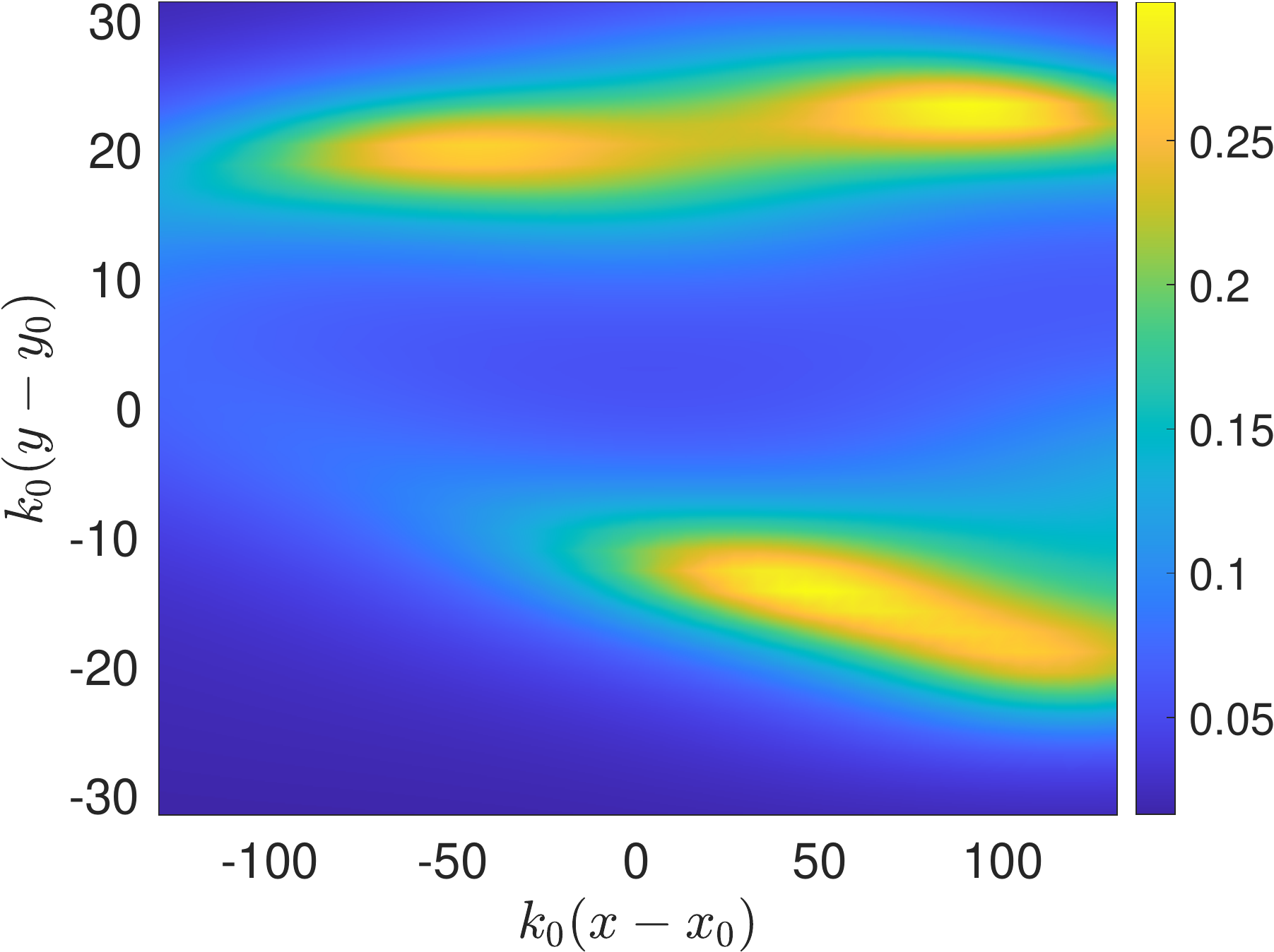}
    \caption{$1/F_\epsilon$ with $\epsilon=0.01$}
  \end{subfigure}
  \begin{subfigure}[t]{0.3\linewidth}
    \includegraphics[width=\linewidth]{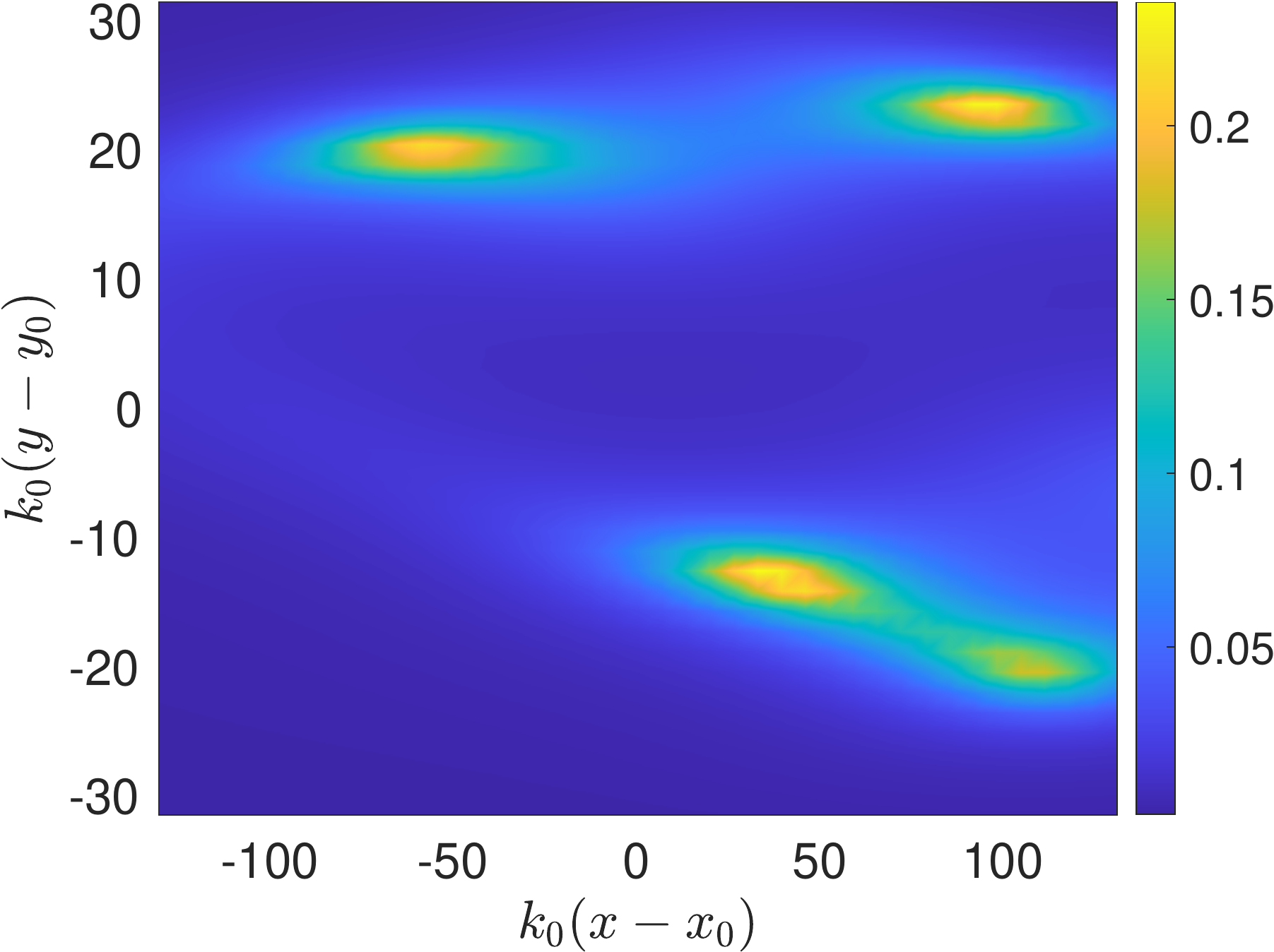}
    \caption{$1/F_\epsilon$ with $\epsilon=0.001$}
  \end{subfigure}
  \caption{Comparison of images formed using CINT (a) and
    $1/F_\epsilon$ with $\epsilon = 0.01$ (b) and $\epsilon = 0.001$
    (c) for four point targets. The CINT image is computed with
    $X_d=a/6$ and $\Omega_d=B/2$. The strength of the fluctuations is
    $\tilde{\sigma} = 0.2$.}
  \label{fig:comp4a}  
\end{figure}

Images produced through evaluation of $1/F_{\epsilon}(\ypos)$ over an
imaging region attain tunably high-resolution images of target
locations through a user-defined parameter $\epsilon$. Through a
resolution analysis for a linear flight path, we have determined that
the cross-range resolution of this imaging method is
$O(\sqrt{\epsilon} (c/B) (L/a) )$ where $c$ is the wave speed, $B$ is
the bandwidth, $L$ is the distance from the center of the flight path
to the center of the imaging region, and $a$ is the length of the
synthetic aperture. We have also determined that the resolution of
this imaging method in range is $O(\sqrt{\epsilon} (c/B) (L/R) )$
where $R$ is the range distance from the center of the flight path to
the center of the imaging region. With these resolution estimates, we
see how the user-defined parameter $\epsilon$ may be set to adjust the
image resolution for different settings. 

Images produced through evaluation of $1/R_{\epsilon}(\ypos)$ over an
imaging region do not reveal target locations. However, if the target
location is known, $1/R_{\epsilon}(\ypos)$ provides an accurate method
for recovering the complex reflectivity of a target. Again, the
user-defined parameter $\epsilon$ can be set to regularize the
function to enable stable recovery of the complex reflectivity. It is
for this reason that we have proposed a two-stage imaging method in
which $1/F_{\epsilon}(\ypos)$ is used to determine location of
target(s), and $1/R_{\epsilon}(\ypos)$ is evaluated at those locations
to recover the complex reflectivities. Additionally, the value of
$\epsilon$ used for evaluating $1/F_{\epsilon}$ need not be the same
used for evaluating $1/R_{\epsilon}$, so this parameter can be tuned
independently for these two different imaging functionals.

When there is uncertainty in the travel times, we have shown that
images formed by evaluating $1/F_{\epsilon}(\ypos)$ have an expected
value that is the same as the image formed in a homogeneous medium
provided that the variances of the random perturbations are
sufficiently small. Moreover, the variance of the image will be small
for that case indicating that this imaging method is statistically
stable to random perturbations in the travel times.

Both $F_{\epsilon}(\ypos)$ and $R_{\epsilon}(\ypos)$ are computed
using the SVD of the rearranged data. Consequently, their
effectiveness is understood to be related to how well the singular
values corresponding to signals scattered by the targets are separated
from noise. Provided that there is sufficient SNR for these singular
values to be separated, the parameter $\epsilon$ mitigates noise and
allows the user to control image resolution.  When there is
uncertainty in travel times, one can set the value of $\epsilon$ to
ensure image accuracy and statistical stability which, in turn, will
set the achievable image resolution.

Because this imaging method involves only elementary computations on
the data and allows for user-control to produce high-resolution,
quantitative images of targets, we believe that it is useful for a
broad variety of SAR imaging applications.

\section*{Acknowledgments}

The authors acknowledge support by the Air Force Office of Scientific
Research (FA9550-21-1-0196). A.~D.~Kim also acknowledges support by
the National Science Foundation (DMS-1840265).


\end{document}